\newtheorem{condition}[theorem]{Condition}
\newcommand{\ignore}[1]{}
\title{High-Order Implicit Time-Marching Methods Based on Generalized Summation-By-Parts Operators\footnotemark[1]} 
\author{P.~D. Boom\footnotemark[2]\ \footnotemark[3] \and D.~W. Zingg\footnotemark[2]\ \footnotemark[4]}
\begin{document}
\maketitle

\renewcommand{\thefootnote}{\fnsymbol{footnote}}
\footnotetext[1]{Some of the material presented in this article has also appeared in: \textsc{P. D. Boom and D. W. Zingg}, \textit{Investigation of Efficient High-Order Implicit Runge-Kutta Methods Based on Generalized Summation-by-Parts Operators}, 22nd AIAA Computational Fluid Dynamics Conference, AIAA-2015-2757 (2015).
}
\footnotetext[2]{Institute for Aerospace Studies, University of Toronto, Toronto, Ontario, M3H 5T6, Canada}
\footnotetext[3]{Ph.D. Candidate}
\footnotetext[4]{Professor and Director, Tier 1 Canada Research Chair in Computational Aerodynamics, J. Armand Bombardier Foundation Chair in Aerospace Flight}
\renewcommand{\thefootnote}{\arabic{footnote}}

\slugger{sisc}{xxxx}{xx}{x}{x--x}%slugger should be set to mms, siap, sicomp, sicon, sidma, sima, simax, sinum, siopt, sisc, or sirev

\begin{abstract}

This article extends the theory of classical finite-difference summation-by-parts (FD-SBP) time-marching methods to the generalized summation-by-parts (GSBP) framework. Dual-consistent GSBP time-marching methods are shown to retain: A and L-stability, as well as superconvergence of integral functionals when integrated with the quadrature associated with the discretization. This also implies that the solution approximated at the end of each time step is superconvergent. In addition GSBP time-marching methods constructed with a diagonal norm are BN-stable. This article also formalizes the connection between FD-SBP/GSBP time-marching methods and implicit Runge-Kutta methods. Through this connection, the minimum accuracy of the solution approximated at the end of a time step is extended for nonlinear problems. It is also exploited to derive conditions under which nonlinearly stable GSBP time-marching methods can be constructed. The GSBP approach to time marching can simplify the construction of high-order fully-implicit Runge-Kutta methods with a particular set of properties favourable for stiff initial value problems, such as L-stability. It can facilitate the analysis of fully discrete approximations to PDEs and is amenable to to multi-dimensional spcae-time discretizations, in which case the explicit connection to Runge-Kutta methods is often lost. A few examples of known and novel Runge-Kutta methods associated with GSBP operators are presented. The novel methods, all of which are L-stable and BN-stable, include a four-stage seventh-order fully-implicit method, a three-stage third-order diagonally-implicit method, and a fourth-order four-stage diagonally-implicit method. The relative efficiency of the schemes is investigated and compared with a few popular non-GSBP Runge-Kutta methods.

\end{abstract}

\begin{keywords}
Initial-Value Problems, Summation-by-Parts, Simultaneous-Approximation-Terms, Implicit Time-Marching Methods, Multistage methods, Superconvergence
\end{keywords}

\begin{AMS}
\end{AMS}

\pagestyle{myheadings}
\thispagestyle{plain}
\markboth{P.~D. BOOM AND D.~W. ZINGG}{TIME-MARCHING METHODS BASED ON GSBP OPERATORS}

\section{Introduction}\label{Sec:Intro}

%========================================================================================
%
%									INTRODUCTION
%
%========================================================================================

Recently, it was shown that finite-difference summation-by-parts (FD-SBP) operators \cite{Kreiss1974,Strand1994} and simultaneous approximation terms (SATs) \cite{Carpenter1994,Carpenter1999,Funaro1988,Funaro1991} can be used to construct high-order fully-implicit time-marching methods \cite{Nordstrom:2013,Nordstrom:2014}. An important motivation for the classical SBP-SAT approach is the ability to construct energy estimates of discrete approximations to ordinary and partial differential equations (ODEs and PDEs) \cite{Nordstrom:2013}. Provided the continuous problem is well-posed, these estimates can be used to prove the stability of the numerical solution. By definition, classical SBP time-marching methods are L-stable \cite{Nordstrom:2014} and lead to superconvergence of integral functionals \cite{Hicken:2011b,Hicken:2014}. Furthermore, those associated with diagonal norm matrices are BN-stable and energy stable \cite{Nordstrom:2014}.

Classical SBP time-marching methods can be implemented with multiple time steps using a multiblock approach. Dual-consistency enables each time step (block) to be solved sequentially in time; however, the solution points within each time step are fully coupled \cite{Nordstrom:2014}. This is analogous to a fully-implicit Runge-Kutta method. The classical FD-SBP time-marching methods considered in \cite{Nordstrom:2013, Nordstrom:2014} are constructed from a repeating centered FD stencil with boundary closures. They are defined on a uniform distribution of solution points which includes both boundary points of the time step. As a consequence, these schemes require a significant number of solution points within each time step to achieve a prescribed order of accuracy.

An extension of the SBP property for initial boundary value problems (IBVPs) was proposed in \cite{DBZ:2014}, called generalized summation-by-parts (GSBP). GSBP operators require significantly fewer solution points than classical FD-SBP operators to achieve a prescribed order of accuracy \cite{DBZ:2014}. This is accomplished by removing the need for a connection to a centered FD stencil, and requiring only that the solution points be unique. By doing so, the GSBP framework also describes several continuous and discontinuous collocated spectral-element operators. However, GSBP operators also do not require the existence of basis functions and hence have increased flexibility relative to spectral-element approaches. For example, this flexibility can be used to simplify the construction of multidimensional operators: we only require the existence of a positive cubature, as opposed to a full set of basis functions \cite{Hicken:2015}.

The objective of this paper is to extend the theory of classical FD-SBP time-marching methods \cite{Nordstrom:2013,Nordstrom:2014} to the GSBP framework \cite{DBZ:2014}. The generalized framework was specifically designed to mimic many of the characteristics of the classical approach. Therefore, the primary challenge in extending the time-marching theory is to account for operators which do not include one or both boundary points of the time step. In this article we show that GSBP time-marching methods retain: A and L-stability, as well as superconvergence of integral functionals. Moreover, those based on diagonal norms are shown to retain BN-stability and energy stability. This paper also presents the connection between SBP/GSBP time-marching methods and implicit Runge-Kutta methods. This connection is used to extend parts of the superconvergence theory to nonlinear problems. It is also used to derive conditions under which BN-stable dense-norm GSBP time-marching methods can be constructed. While SBP/GSBP time-marching method form a subset of implicit Runge-Kutta methods, the SBP/GSBP characterization remains important. The approach facilitates the analysis of fully-discrete approximations of ODEs and PDEs using high-order schemes which are unconditionally stable by definition. This can even lead to some novel Runge-Kutta schemes, as shown in the article. Furthermore, it is amenable to multi-dimensional space-time discretizations of PDEs, in which case the explicit connection to Runge-Kutta methods is often lost.

The paper is organized as follows: Section \ref{Sec:Theory} gives a brief review of the GSBP-SAT approach presented in \cite{DBZ:2014} within the context of IVPs. The classical SBP-SAT accuracy and stability theory is extended for GSBP time-marching methods in Sections \ref{Sec:Acc} and \ref{Sec:Stab}. The connection to Runge-Kutta methods is then presented in Section \ref{Sec:Runge-Kutta_Analogy}, along with some additional theoretical results. Sample GSBP time-marching methods are given in Section \ref{Sec:Methods} and numerical examples are presented in Section \ref{Sec:Results}. A summary concludes the paper in Section \ref{Sec:Conc}.

%==================================================================================================================
%
\section{The GSBP-SAT Approach}\label{Sec:Theory}
%
%==================================================================================================================

This section presents a brief review of the GSBP-SAT framework presented in \cite{DBZ:2014} within the context of IVPs.
%
%------------------------------------------------------------------------------------------------------------------
%
\subsection{Generalized Summation-By-Parts Operators}\label{Sec:GSBP}
%
%------------------------------------------------------------------------------------------------------------------
%
An important objective of the SBP/GSBP approach is to prove the numerical stability of a discretization by mimicking the continuous stability analysis. Consider the nonlinear IVP:
\begin{equation}\label{Eq:NonIVP}
	\mathcal{Y}^\prime = \mathcal{F}(\mathcal{Y},t),\quad 
	\mathcal{Y}(t_0) = \mathcal{Y}_0, \quad\text{with}\quad t_0 \le t \le t_f,
\end{equation}
where $\mathcal{Y}\in\mathbb{C}$, $\mathcal{Y}^\prime=\frac{d\mathcal{Y}}{dt}$, $\mathcal{F}(\mathcal{Y},t):\lbrace\mathbb{C},\mathbb{R}\rbrace\to\mathbb{C}$, and $\mathcal{Y}_0$ is the initial data. The stability of the continuous IVP \eqref{Eq:NonIVP} can be shown through the use of the energy method in which a bound on the norm of the solution, called an energy estimate, is derived with respect to the initial condition \cite{Otto,Gustafsson1996,Gustafsson2008}. This is accomplished by taking the inner product of the solution and the IVP and relating the integrals to the initial data through the use integration-by-parts (IBP):%. IBP between two complex square-integrable functions is:
\begin{equation}\label{Eq:IBP}
	\left( \mathcal{U}, \frac{\partial\mathcal{V}}{\partial t} \right) + 
	\left( \frac{\partial\mathcal{U}}{\partial t}, \mathcal{V} \right) = 
	\bar{\mathcal{U}}\mathcal{V}\big|_{t_0}^{t_f},
\end{equation}
where $(\mathcal{U},\mathcal{V})$ defines the L$_2$ inner product $\int_{t_0}^{t_f} \bar{\mathcal{U}} \mathcal{V} dt$, and $\bar{\mathcal{U}}$ is the complex conjugate.

In the discrete case, a first-derivative GSBP operator is defined as follows:\\[-1.5ex]

\begin{definition}\label{Def:SBPGen}
	{\bf Generalized summation-by-parts operator \cite{DBZ:2014}:} 
	A linear operator $D=H^{-1}\Theta$  is a GSBP approximation to the first derivative of order $q\ge1$ on the distribution of solution points $\mathbf{t}=[t_1,\ldots,t_n]$ with all $t_i$ are unique, if $D$ satisfies:
	\begin{equation}\label{Eq:SBP_Deg}
		D\mathbf{t}^j = j\mathbf{t}^{j-1}, \quad j\in[0,q],
	\end{equation}
	where $\mathbf{t}^j = [t_1^j,\ldots,t_n^j]^T$, $H$ is a symmetric positive definite (SPD) matrix called the norm, and $(\Theta+\Theta^T)=\tilde{E}$ is a symmetric matrix which defines a boundary operator:
	\begin{equation}\label{Eq:E_Deg}
		(\mathbf{t}^i)^T\tilde{E}\mathbf{t}^j = t_f^{i+j} - t_0^{i+j}, \quad i,j,\in[0,r],
	\end{equation}
	with $r\ge q$.
\end{definition}

\ \\[-4.5ex]

With this definition one can easily show that a GSBP operator satisfies the following: 
\begin{equation}\label{Eq:SBP}
	\mathbf{u}^*HD\mathbf{v} + 
	\mathbf{u}^*D^TH\mathbf{v} = \mathbf{u}^*\tilde{E}\mathbf{v},
\end{equation}
where $\mathbf{u}$ is the projection of the continuous function $\mathcal{U}(t)$ onto a distribution of solution points $\mathbf{t}=[t_1,\ldots,t_n]$, and $\mathbf{u}^*$ is the conjugate transpose of $\mathbf{u}$. Each term is a discrete approximation to the corresponding term in \eqref{Eq:IBP}. Hence the GSBP operator satisfies a discrete analogue of IBP.

The existence of GSBP operators and their relationship to a quadrature rule of order $\tau$ is presented in \cite{DBZ:2014,Boom:Thesis}.  The classical SBP definition is recovered when $\tilde{E} = \diag(-1,0,\ldots,0,1)$ and therefore $\mathbf{u}^*\tilde{E}\mathbf{v}$ is strictly equal to $\bar{\mathcal{U}}\mathcal{V}\big|_{t_0}^{t_f}$. In this article we make a distinction between diagonal and nondiagonal norm matrices $H$. The latter will be referred to as dense norms following \cite{DBZ:2014}, but include all nondiagonal norms whether they are strictly dense matrices or not. For example, classical FD-SBP operators with a full, or restricted-full norm are referred to as dense-norm operators.

%------------------------------------------------------------------------------------------------------------------
%
\subsection{Simultaneous Approximation Terms for GSBP Operators}\label{Sec:SAT}
%
%------------------------------------------------------------------------------------------------------------------

Applying a GSBP operator to the nonlinear IVP \eqref{Eq:NonIVP} requires a means to impose the initial condition, and to couple the solution in adjacent time steps. A common approach is to use simultaneous-approximation-terms (SATs), which weakly impose these conditions via penalty terms. An important motivation for the SBP/GSBP-SAT approach is compatibility with the energy method to prove numerical stability. In order for the use of SATs to be compatible with GSBP operators and the energy method, the following additional condition is imposed:\\

\begin{condition}\label{Ass:E}
	A GSBP operator $D=H^{-1}\Theta$ must satisfy the relationship:
	\begin{equation}\label{Eq:DefE}
		\Theta + \Theta^T = \tilde{E} = \chi_{t_f}\chi_{t_f}^T-\chi_{t_0}\chi_{t_0}^T,
	\end{equation}
	%
	%where the matrices $\chi_{t_0}\in\mathbb{R}^{n\times m}$ and $\chi_{t_f}\in\mathbb{R}^{n\times m}$ for $m\le n$ satisfy
	such that
	\begin{equation}
		\chi_{t_0}^T \mathbf{t}^j = {t_0}^{j}
		\text{ and }
		\chi_{t_f}^T \mathbf{t}^j = {t_f}^{j}
		\text{ for }
		j\in[0,r\ge q],
	\end{equation}
	%
	%with $$ a vector of ones of length $m$.
\end{condition}

\ \\[-4.5ex]

%This condition implies that each column of $\chi_{t_0}$ is a vector projection operator. If $r$ is greater than or equal to $n-1$, then the vector projection operator is unique and no benefit is obtained from $m>1$. For example, if the distribution of solution points includes $t_0$ and $t_f$, then the operators can be exact: $[1,0,\ldots,0]^T$ and $[0,\ldots,0,1]^T$, respectively. This is the case with classical FD-SBP operators, for example.

Applying the GSBP-SAT approach to the IVP \eqref{Eq:NonIVP}: $\mathcal{Y}^\prime=\mathcal{F}(\mathcal{Y},t)$ and $\mathcal{Y}(t_0) = \mathcal{Y}_0$ for $t_0\le t\le t_f$, with two time steps (step 1: $t\in[t_0,\delta]$; and step 2: $t\in[\delta,t_f]$) yields:
\begin{multline}\label{Eq:DNonIVP}
	\left[
	\begin{array}{cc}
		D^{[1]} & \mathbf{0}\\
		\mathbf{0} & D^{[2]}
	\end{array}
	\right]\left[
	\begin{array}{c}
		\mathbf{y}_{\mathrm{d}}^{[1]}\\
		\mathbf{y}_{\mathrm{d}}^{[2]}	
	\end{array}
	\right] = 
	\left[
	\begin{array}{c}
		\mathbf{f}_{\mathrm{d}}^{[1]}\\
		\mathbf{f}_{\mathrm{d}}^{[2]}
	\end{array}
	\right]
	+ \underbrace{
	\left[
	\begin{array}{c}
		\sigma \left(H^{[1]}\right)^{-1}\chi_{t_0}^{[1]}\left(\left(\chi^{[1]}_{t_0}\right)^T \mathbf{y}_{\mathrm{d}}^{[1]} - \mathcal{Y}_0\right)\\
		\mathbf{0}
	\end{array}
	\right]
	}_\text{initial condition SAT}\\[1ex]	
	+ \underbrace{
	\left[
	\begin{array}{cc}
		\left(H^{[1]}\right)^{-1}\chi_{\delta}^{[1]} & \mathbf{0}\\
		\mathbf{0} & \left(H^{[2]}\right)^{-1}\chi_{\delta}^{[2]}
	\end{array}
	\right]
	\left[
		\begin{array}{cc}
			\sigma^{[1]}    & - \sigma^{[1]} \\
			- \sigma^{[2]}  &   \sigma^{[2]} 
		\end{array}
		\right]\left[
	\begin{array}{c}
		\left(\chi_{\delta}^{[1]}\right)^T \mathbf{y}_{\mathrm{d}}^{[1]}\\
		\left(\chi_{\delta}^{[2]}\right)^T \mathbf{y}_{\mathrm{d}}^{[2]}	
	\end{array}
	\right]}_\text{time step coupling SAT},
\end{multline}
where $\sigma$, $\sigma^{[1]}$ and $\sigma^{[2]}$ are the SAT penalty parameters, and $\mathbf{f}_{\mathrm{d}}^{[m]} = \mathcal{F}(\mathbf{y}_{\mathrm{d}}^{[m]},\mathbf{t}^{[m]})$ for time step $m$. Conservation requires that the interface SAT penalty parameters satisfy $\sigma^{[1]} = \sigma^{[2]} + 1$ \cite{Carpenter1999}. With the choice of $\sigma^{[2]}=-1$, the solution in the first time step becomes independent of the solution in the second time step, and the time steps can be solved sequentially in time. If in addition $\sigma =-1$, each time step becomes dual consistent (See Section \ref{Sec:Dual}) and we will refer to the discretization of each time step as a GSBP time-marching method.

Finally, in order to guarantee a unique solution to linear scalar ODEs, we impose an extension of Assumption 1 from \cite{Nordstrom:2013}:\\[-1.5ex]

\begin{condition}\label{Ass:Rinv}
	For a GSBP operator $D=H^{-1}\Theta$ that satisfies Condition \ref{Ass:E}, all eigenvalues of $\left(\Theta-\sigma \chi_{t_0}\chi_{t_0}^T\right)$ must have strictly positive eigenvalues for $\sigma < -\frac{1}{2}$.
\end{condition}

\ 

\begin{corollary}
	For a GSBP operator $D=H^{-1}\Theta$ that satisfies Condition \ref{Ass:E}, the matrix $\left(\Theta-\sigma \chi_{t_0}\chi_{t_0}^T\right)$ is invertible for $\sigma < -\frac{1}{2}$.
\end{corollary}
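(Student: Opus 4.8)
The plan is to establish invertibility of $M:=\Theta-\sigma\chi_{t_0}\chi_{t_0}^T$ by analysing its symmetric part, which Condition \ref{Ass:E} pins down completely, and then to show that the only directions in which $M$ can be singular are excluded. Since $\Theta$, $H$, $\chi_{t_0}$ and $\chi_{t_f}$ are all real, it suffices to prove $Mv\neq 0$ for every nonzero $v\in\mathbb{C}^n$, using $\operatorname{Re}(v^*Mv)=v^*\tfrac12(M+M^T)v$.

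First I would compute the symmetric part of $M$ directly from Condition \ref{Ass:E}. Using $\Theta+\Theta^T=\tilde E=\chi_{t_f}\chi_{t_f}^T-\chi_{t_0}\chi_{t_0}^T$,
\[
\tfrac12\left(M+M^T\right)=\tfrac12\tilde E-\sigma\chi_{t_0}\chi_{t_0}^T=\tfrac12\chi_{t_f}\chi_{t_f}^T-\left(\tfrac12+\sigma\right)\chi_{t_0}\chi_{t_0}^T .
\]
For $\sigma<-\tfrac12$ the coefficient $-(\tfrac12+\sigma)$ is strictly positive, so the symmetric part is a sum of two positive-semidefinite rank-one matrices and is therefore positive semidefinite. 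Hence $\operatorname{Re}(v^*Mv)=\tfrac12\lvert\chi_{t_f}^Tv\rvert^2+\lvert\tfrac12+\sigma\rvert\,\lvert\chi_{t_0}^Tv\rvert^2\ge 0$ for all $v$, which already shows that every eigenvalue of $M$ has nonnegative real part.

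Next I would use this estimate productively to localise any hypothetical null vector. Suppose $Mv=0$ with $v\neq0$. Then $v^*Mv=0$, so the real part above vanishes; being a sum of nonnegative terms, each must vanish, giving $\chi_{t_0}^Tv=0$ and $\chi_{t_f}^Tv=0$. Substituting $\chi_{t_0}^Tv=0$ into $Mv=0$ yields $\Theta v=0$, hence $Dv=H^{-1}\Theta v=0$; and then $\Theta^Tv=\tilde E v-\Theta v=\chi_{t_f}(\chi_{t_f}^Tv)-\chi_{t_0}(\chi_{t_0}^Tv)=0$ as well. Thus a null vector of $M$ must lie in $\ker D$ and additionally be annihilated by both boundary functionals $\chi_{t_0}^T$ and $\chi_{t_f}^T$; pairing the accuracy relation $D\mathbf t^{j}=j\mathbf t^{j-1}$ with $\Theta^Tv=0$ further forces $v$ to be $H$-orthogonal to $\mathbf t^0,\dots,\mathbf t^{q-1}$.

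The remaining, and main, obstacle is to rule out such a $v$. The symmetric part has rank at most two, so it is never positive definite for $n\ge3$ and cannot by itself exclude a zero eigenvalue; the content of the corollary is therefore exactly that $\ker D$ meets the constraint set $\{\chi_{t_0}^Tv=0\}$ only at $v=0$. For operators with the minimal kernel $\ker D=\operatorname{span}\{\mathbf 1\}$ this is immediate, since $\chi_{t_0}^T\mathbf 1=t_0^0=1\neq0$ forces the constant mode, and hence $v$, to vanish. In general the exclusion is precisely what Condition \ref{Ass:Rinv} supplies: it asserts that for $\sigma<-\tfrac12$ every eigenvalue of $M=\Theta-\sigma\chi_{t_0}\chi_{t_0}^T$ has strictly positive real part, so $0$ is not an eigenvalue and no such null vector exists. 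Combining the semidefiniteness obtained from Condition \ref{Ass:E} with this exclusion gives $Mv\neq0$ for all $v\neq0$, i.e.\ $M$ is invertible for $\sigma<-\tfrac12$.
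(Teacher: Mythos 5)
Your proof is correct, and its decisive step is exactly the paper's entire argument: the corollary sits directly beneath Condition \ref{Ass:Rinv} and is stated without proof, because once that condition is imposed (all eigenvalues of $\Theta-\sigma\chi_{t_0}\chi_{t_0}^T$ strictly positive for $\sigma<-\tfrac12$), zero is excluded from the spectrum and invertibility is immediate --- the paper never attempts to derive invertibility from Condition \ref{Ass:E} alone, and indeed remarks that Condition \ref{Ass:Rinv} is only proven for second-order classical operators and otherwise verified numerically. So your closing paragraph is the whole intended proof, and everything before it is a (correct) detour the paper does not take. That detour does buy something, though: the computation $\tfrac12(M+M^T)=\tfrac12\chi_{t_f}\chi_{t_f}^T-\bigl(\tfrac12+\sigma\bigr)\chi_{t_0}\chi_{t_0}^T$ shows the symmetric part is positive semidefinite precisely when $\sigma\le-\tfrac12$, placing the spectrum in the closed right half-plane from Condition \ref{Ass:E} alone; your localization argument then pins any hypothetical null vector to $\ker D\cap\ker\chi_{t_0}^T\cap\ker\chi_{t_f}^T$, so that whenever $\ker D=\operatorname{span}\{\mathds{1}\}$ (using $\chi_{t_0}^T\mathds{1}=1$) invertibility follows without invoking Condition \ref{Ass:Rinv} at all. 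This is a genuinely sharper observation than the paper makes: it explains \emph{why} Condition \ref{Ass:Rinv} is needed in general (the rank-two symmetric part cannot exclude a kernel for $n\ge3$) and identifies the exact obstruction it removes. You were also right not to claim a full proof from Condition \ref{Ass:E} alone --- that would be a gap, since nothing in Definition \ref{Def:SBPGen} forces $\ker D$ to be one-dimensional; your honest fallback to Condition \ref{Ass:Rinv} is precisely what the paper does.
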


\ \\[-4.5ex]

A proof of this condition for  classical second-order FD-SBP operators is presented in \cite{Nordstrom:2013}, along with numerical demonstration for higher-order diagonal-norm FD-SBP operators. Using a multiblock approach in which all operators are identical and of fixed size, this can easily be verified numerically. This property has been verified for all GSBP operators considered in \cite{DBZ:2014}, as well as the those considered in this article.

%==================================================================================================================
%
\section{Accuracy}\label{Sec:Acc}
%
%==================================================================================================================

In many numerical simulations, integral functionals of the solution, $\mathcal{J}(\mathcal{Y}) = (\mathcal{K},\mathcal{Y}) + {\alpha}\mathcal{Y}\big|_{t_f}$, are of more interest than the solution itself. Hicken and Zingg \cite{Hicken:2011b,Hicken:2014} showed that integral functionals of the solution to dual-consistent classical FD-SBP-SAT discretizations are superconvergent. Specifically, functionals integrated with the quadrature associated with the SBP operator converge with the order of the quadrature $\tau$, rather than the lower order of the operator $q$. For example, using a dual-consistent diagonal-norm FD-SBP-SAT discretization, the rate of superconvergence is twice the order of the underlying discretization. This was shown for linear IBVPs and assumes that the continuous solution is sufficiently smooth.

Lundquist and Nordstr\"om \cite{Nordstrom:2014} observed that this property extends to IVPs and implies that the numerical solution at the end of each time step $y_{\mathrm{d},n}$ is also superconvergent. Therefore, we can recover high-order approximations of the solution using a lower-order discretization. This is analogous to Runge-Kutta methods, which make use of low-order stage approximations to construct a higher-order solution update. In addition, it is the order of this update which is referred to when discussing the order of traditional time-marching methods. This further highlights the importance of the superconvergence theory for IVPs.

The primary focus of this section is to extend the classical SBP convergence theory of integral functionals to the GSBP framework. The specific challenges are to account for the fact that  $\chi_{t_0}^T\mathbf{y}$ is not in general equal to $\mathcal{Y}_0$ and to define the accuracy of a GSBP norm matrix. To complete the extension of the theory, a number of intermediate results are presented first.

%------------------------------------------------------------------------------------------------------------------
%
\subsection{The Primal Problem}\label{Sec:PAcc}
%
%------------------------------------------------------------------------------------------------------------------

The classical SBP-SAT convergence theory for integral functionals is defined for IVPs which are linear with respect to the solution. The truncation error of the discrete IVP plays an important role in bounding the rate of superconvergence. In this section, the GSBP-SAT discretization of the relevant linear IVP is defined along with its truncation error.

To begin, consider the scalar IVP which is linear with respect to the solution:
\begin{equation}\label{Eq:Primal}
	\mathcal{Y}^\prime = \lambda\mathcal{Y} + \mathcal{G}(t),\quad \mathcal{Y}(t_0) = \mathcal{Y}_0, \quad\text{with}\quad t_0 \le t \le t_f,
\end{equation}
where $\mathcal{Y}(t)\in\mathbb{C}$ and $\mathcal{G}(t):\mathbb{R}\to\mathbb{C}$ is a nonlinear forcing function. This will be referred to as the primal problem in subsequent sections when discussing dual-consistency. The GSBP-SAT discretization of \eqref{Eq:Primal} is:
\begin{equation}\label{Eq:DPrimal}
		D\mathbf{y}_\mathrm{d} = \lambda\mathbf{y}_\mathrm{d} + \mathbf{g} +
	\sigma H^{-1}\chi_{t_0}\left(\chi_{t_0}^T \mathbf{y}_\mathrm{d} - \mathcal{Y}_0\right).
\end{equation}
The truncation error is obtained by substituting the projection of the continuous solution $\mathbf{y}$ into \eqref{Eq:DPrimal}:
\begin{equation}
	T_e = D\mathbf{y} - \lambda\mathbf{y} - \mathbf{g} - \sigma H^{-1}\chi_{t_0}(\chi_{t_0}^T\mathbf{y} - \mathcal{Y}_0).
\end{equation}
This can be rearranged in terms of the difference between the projection of the continuous solution and the solution to the discrete equations:
\begin{equation}\label{Eq:err2}
	T_e = H^{-1}(\Theta - \sigma \chi_{t_0}\chi_{t_0}^T - \lambda H)(\mathbf{y} - \mathbf{y}_\mathrm{d}).
\end{equation}
Alternatively, it can be written with respect to the projection of the continuous solution and time derivative:
\begin{equation}\label{Eq:err}
	T_e = D\mathbf{y} - \mathbf{y}^\prime - \sigma H^{-1}\chi_{t_0}(\chi_{t_0}^T\mathbf{y} - \mathcal{Y}_0).	
\end{equation}
In the classical SBP-SAT approach the truncation error $T_e = D\mathbf{y} - \mathbf{y}^\prime$ is of order $q$ \cite{Nordstrom:2014}, as $\chi_{t_0}^T\mathbf{y}=\mathcal{Y}_0$. In the generalized case, Condition \ref{Ass:E} ensures that the projection operator $\chi_{t_0}$ is of order greater than or equal to the GSBP operator itself, $r\ge q$. Thus, the lowest-order entry in $T_e$ is also of order $q$ in the generalized case. 

%------------------------------------------------------------------------------------------------------------------
%
\subsection{The Dual Problem}\label{Sec:Dual}
%
%------------------------------------------------------------------------------------------------------------------

Our goal is to derive the convergence rate of integral functionals:
\begin{equation}\label{Eq:PrimalFunc}
	\mathcal{J}(\mathcal{Y}) = (\mathcal{K},\mathcal{Y}) + {\alpha}\mathcal{Y}\big|_{t_f},
\end{equation}
of the primal problem \eqref{Eq:Primal}. A key tool required to accomplish this task is the Lagrangian dual problem. The derivation of the continuous dual problem
\begin{equation}\label{Eq:Dual}
	-\Phi^\prime = \bar{\lambda}\Phi + \mathcal{K}(t),\quad \Phi({t_f}) = \bar{\alpha}, \quad\text{with}\quad t_0 \le t \le t_f.
\end{equation}
and dual functional
\begin{equation}\label{Eq:DualFunc}
	\mathcal{J}(\Phi) = \mathcal{J}(\mathcal{Y}) = (\Phi,\mathcal{G}) + \bar{\Phi}\mathcal{Y}\big|_{t_0}.
\end{equation}
can be found in several references (\textit{e.g.}\ \cite{Hicken:2011b}). This section presents a brief derivation of the discrete GSBP-SAT dual problem, along with the dual truncation error. 

To begin, consider the discrete form of the primal functional \eqref{Eq:PrimalFunc}: 
\begin{equation}\label{Eq:DFunc}
	J_H(\mathbf{y}_\mathrm{d}) = (\mathbf{k},\mathbf{y}_\mathrm{d})_H + \alpha \chi_{t_f}^T\mathbf{y}_\mathrm{d}.
\end{equation}
Subtracting the inner product of a vector $\phi_\mathrm{d}$ and the discrete primal problem \eqref{Eq:DPrimal} gives the discrete primal Lagrangian:
\begin{equation}
	\begin{array}{rl}
		L_p =& (\mathbf{k},\mathbf{y}_\mathrm{d})_H + \alpha \chi_{t_f}^T\mathbf{y}_\mathrm{d} -\\[1ex]
		
		& \quad (\phi_\mathrm{d},D\mathbf{y}_\mathrm{d} - \lambda\mathbf{y}_\mathrm{d} - \mathbf{g} - H^{-1}\sigma \chi_{t_0}(\chi_{t_0}^T\mathbf{y}_\mathrm{d} - \mathcal{Y}_0))_H.
	\end{array}
\end{equation}
Rearranging, making use of Condition \ref{Ass:E}: $\tilde{E} = \Theta + \Theta^T = \chi_{t_f}\chi_{t_f}^T - \chi_{t_0}\chi_{t_0}^T$, and simplifying yields:
\begin{equation}\label{Eq:DualInter2}
	\begin{array}{rl}
		L_p &= (\phi_\mathrm{d},\mathbf{g})_H  - \sigma\phi_\mathrm{d}^* \chi_{t_0}\mathcal{Y}_0\ +\\[1ex]
		& \hspace{0cm} \big(D\phi_\mathrm{d} + \bar{\lambda}\phi_\mathrm{d} +\mathbf{k} 
		-H^{-1}\chi_{t_f}(\chi_{t_f}^T\phi_\mathrm{d}-\bar{\alpha}) + (\sigma+1)H^{-1} \chi_{t_0}\chi_{t_0}^T\phi_\mathrm{d},\mathbf{y}_\mathrm{d}\big)_H.
	\end{array}
\end{equation}
The first two terms are an approximation of the dual functional \eqref{Eq:DualFunc}. The final term is an approximation of the inner product of the dual problem \eqref{Eq:Dual} and primal solution. 

A discretization which produces consistent approximations of both the primal and dual problems is called dual-consistent \cite{Lu2005}. As with the classical SBP-SAT approach, this occurs in the generalized case when $\sigma=-1$. For reference, the dual-consistent GSBP-SAT approximations of the dual problem and dual functional are:
\begin{equation}\label{Eq:DDual2}
	-D\phi_\mathrm{d} = \bar{\lambda}\phi_\mathrm{d} + \mathbf{k} - H^{-1}\chi_{t_f}(\chi_{t_f}^T\phi_\mathrm{d}-\bar{\alpha}),
\end{equation}
and
\begin{equation}
	J_H(\phi_\mathrm{d}) = J_H(\mathbf{y}_\mathrm{d}) = (\phi_\mathrm{d},\mathbf{g})_H  + \phi_\mathrm{d}^* \chi_{t_0}\mathcal{Y}_0.
\end{equation}

Using the same arguments made for the primal problem in Section \ref{Sec:PAcc}, the truncation error of the dual-consistent dual problem:
\begin{equation}
	\begin{array}{rl}
		\tilde{T}_e 
		&= -D\phi - \bar{\lambda}\phi - \mathbf{k} + H^{-1}\chi_{t_f}(\chi_{t_f}^T\phi - \bar{\alpha}) \\[1ex]
		&= -D\phi + \phi^\prime + H^{-1}\chi_{t_f}(\chi_{t_f}^T\phi - \bar{\alpha}),
	\end{array}
\end{equation}
is of order $q$. 

%------------------------------------------------------------------------------------------------------------------
%
\subsection{The Accuracy of the Norm}\label{Sec:Norm}
%
%------------------------------------------------------------------------------------------------------------------

The final piece required to prove the superconvergence of integral functionals is the accuracy with which a norm $H$ associated with GSBP operator $D=H^{-1}\Theta$ approximates the continuous L$_2$-inner product. This is characterized by the following definition: \\

\begin{definition}\label{Def:Norm}
	{\bf Accuracy of a GSBP norm:} 
	A norm $H$ associated with a GSBP operator $D=H^{-1}\Theta$ is of order $\rho$, if:
	\begin{equation}
		(\mathbf{u}, \mathbf{v})_H = \mathbf{u}^T H \mathbf{v} = \int_{t_0}^{t_f} \mathcal{U}\mathcal{V} dt, \quad \mathcal{U}\mathcal{V}\in \mathbb{P}^{\rho},
	\end{equation}
	and
	\begin{equation}
		\phi^T H \big(D\mathbf{y} + H^{-1}\chi_{t_0}(\chi_{t_0}^T\mathbf{y}-\mathcal{Y}_0)\big)
		 = \int_{t_0}^{t_f} \Phi\mathcal{Y}^\prime dt, \quad \Phi\mathcal{Y}^\prime\in \mathbb{P}^{\rho},
	\end{equation}
	where $\mathbb{P}^i$ is the polynomial space of degree $i$, $\mathcal{Y}$ is the solution to the continuous primal problem \eqref{Eq:Primal} with initial condition $\mathcal{Y}_0$, and $\Phi$ is the solution to the dual problem \eqref{Eq:Dual} with homogeneous initial condition $\alpha=0$.
\end{definition}

\ \\
For diagonal-norm GSBP operators we have $\rho=\min(2q+1,\tau)$, where $q$ is the order of the operator and $\tau\ge 2q$ is the order of the associated quadrature rule \cite{DBZ:2014}. For dense-norms GSBP operators we have $\rho=\min(2q+1,s)$, where $2\lceil q/2 \rceil \le s \le \tau$ \cite{DBZ:2014,Boom:Thesis}. A more precise definition of $s$ is given in \cite{Boom:Thesis}. Observe that $\rho$ is always greater than or equal to the order $q$ of the GSBP operator itself. However, in contrast to the classical approach, $\rho$ is not necessarily equal to the order $\tau$ of the associated quadrature.

%------------------------------------------------------------------------------------------------------------------
%
\subsection{Superconvergence}\label{Sec:Super}
%
%------------------------------------------------------------------------------------------------------------------

We now extend the classical SBP-SAT theory for the superconvergence of integral functionals \cite{Hicken:2011b,Hicken:2014} to the GSBP-SAT approach. Classically, the theory implies that functionals constructed from the numerical solution, as well as the solution approximated at the end of each time step, will converge with the order of the associated quadrature rule $\tau$, rather than the order of the underlying discretization $q$. For diagonal-norm FD-SBP time-marching methods these differ by a factor of two, $\tau=2q$. In this section, it is shown that GSBP time-marching methods retain the superconvergence property. In the generalized case it is the order of the GSBP norm matrix, rather than the associated quadrature rule, which dictates the rate of superconvergence. The extension of the theory is presented in a sequence of three theorems to simplify the proofs and to highlight some important intermediate results.

To begin, we consider the functional $(\mathbf{k},\mathbf{y}_\mathrm{d})_H\approx (\mathcal{K}(t),\mathcal{Y})$ associated with a dual problem that has a homogeneous initial condition, $\alpha=0$. This restriction coincides with the requirement imposed in Definition \ref{Def:Norm} for the order of a GSBP norm matrix $\rho$. The following theorem is an extension of the work presented in \cite{Hicken:2011b,Hicken:2014} for dual-consistent classical FD-SBP-SAT discretizations of IBVPs. The use of Definition \ref{Def:Norm} is the primary tool required to extend the proof. Therefore, the theorem is presented without proof:\\

%The primary differences in the proof are to account for the case when $\chi_{t_0}^T\mathbf{y}$ is not equal $\mathcal{Y}_0$ and the accuracy of the GSBP norm matrix. \\[-1.5ex]

\begin{theorem}\label{Thm:LinFunc1}
	If $\mathbf{y}_\mathrm{d}$ is the numerical solution of the primal problem (\ref{Eq:Primal}) with $\mathrm{Re}(\lambda)\le 0$ computed using a GSBP time-marching method of order $q$ and associated with a norm of order $\rho$, then the discrete functional $J_H(\mathbf{y}_\mathrm{d}) = (\mathbf{k},\mathbf{y}_\mathrm{d})_H$ approximates $\mathcal{J}(\mathcal{Y}) = (\mathcal{K}(t),\mathcal{Y})$ for $\mathcal{K}\mathcal{Y}\in C^{\rho}$ with order $\rho$.
\end{theorem}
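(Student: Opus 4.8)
The plan is to adapt the adjoint-based superconvergence argument of Hicken and Zingg \cite{Hicken:2011b,Hicken:2014} to the GSBP setting, with Definition \ref{Def:Norm} replacing the exactness of a diagonal quadrature as the central tool. Writing $\mathbf{e} = \mathbf{y} - \mathbf{y}_\mathrm{d}$ for the primal solution error, I would first split the functional error as
\[
J_H(\mathbf{y}_\mathrm{d}) - \mathcal{J}(\mathcal{Y}) = \big[(\mathbf{k},\mathbf{y})_H - (\mathcal{K},\mathcal{Y})\big] - (\mathbf{k},\mathbf{e})_H .
\]
The bracketed term is a pure quadrature error: since $\mathbf{k}$ and $\mathbf{y}$ are the projections of $\mathcal{K}$ and $\mathcal{Y}$, the first identity of Definition \ref{Def:Norm} makes $(\mathbf{k},\mathbf{y})_H$ exact on $\mathbb{P}^{\rho}$, so for $\mathcal{K}\mathcal{Y}\in C^{\rho}$ this term is of order $\rho$. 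All remaining effort goes into $(\mathbf{k},\mathbf{e})_H$.

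The key identity is obtained by passing to the discrete dual. Setting $\sigma=-1$, the primal truncation error \eqref{Eq:err2} reads $H T_e = (\Theta + \chi_{t_0}\chi_{t_0}^T - \lambda H)\mathbf{e}$. Multiplying the dual-consistent dual problem \eqref{Eq:DDual2} (with $\alpha=0$) by $H$ and using Condition \ref{Ass:E} to substitute $\chi_{t_f}\chi_{t_f}^T = \Theta+\Theta^T+\chi_{t_0}\chi_{t_0}^T$ gives $(\Theta^T + \chi_{t_0}\chi_{t_0}^T - \bar{\lambda} H)\phi_\mathrm{d} = H\mathbf{k}$. Taking the conjugate transpose, and using that $\Theta$, $H=H^T$ and $\chi_{t_0}$ are real, I obtain $\mathbf{k}^* H = \phi_\mathrm{d}^*(\Theta + \chi_{t_0}\chi_{t_0}^T - \lambda H)$, which is exactly the adjoint of the operator acting on $\mathbf{e}$. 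Therefore
\[
(\mathbf{k},\mathbf{e})_H = \phi_\mathrm{d}^*(\Theta + \chi_{t_0}\chi_{t_0}^T - \lambda H)\mathbf{e} = \phi_\mathrm{d}^* H T_e = (\phi_\mathrm{d}, T_e)_H ,
\]
trading the unknown solution error for the known order-$q$ truncation error, weighted by the discrete dual.

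To recover order $\rho$ rather than $q$, I would split $\phi_\mathrm{d} = \phi - \tilde{\mathbf{e}}$ into the projection $\phi$ of the continuous dual and the dual solution error $\tilde{\mathbf{e}}$, so that $(\phi_\mathrm{d},T_e)_H = (\phi,T_e)_H - (\tilde{\mathbf{e}},T_e)_H$. For the first piece I would substitute the form \eqref{Eq:err} of $T_e$: the second identity of Definition \ref{Def:Norm} makes $\phi^* H\big(D\mathbf{y} + H^{-1}\chi_{t_0}(\chi_{t_0}^T\mathbf{y}-\mathcal{Y}_0)\big)$ reproduce $\int_{t_0}^{t_f}\Phi\mathcal{Y}^\prime\,dt$, while the first identity makes $(\phi,\mathbf{y}^\prime)_H$ reproduce the same integral; both are exact on $\mathbb{P}^{\rho}$, so $(\phi,T_e)_H$ is of order $\rho$. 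This is precisely where Condition \ref{Ass:E} (which guarantees $\chi_{t_0}$ is accurate to order $r\ge q$, so that the boundary contribution to $T_e$ does not pollute the estimate) and the norm order $\rho$ do the essential work. Stability of both the primal and dual discretizations, needed to make sense of $\phi_\mathrm{d}$, $\mathbf{e}$ and $\tilde{\mathbf{e}}$, follows from $\mathrm{Re}(\lambda)\le 0$ together with Conditions \ref{Ass:E} and \ref{Ass:Rinv}.

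The hard part is the remaining product term $(\tilde{\mathbf{e}},T_e)_H$. A crude Cauchy--Schwarz bound treats both factors as order $q$ and yields only order $2q$, whereas $\rho$ can be as large as $2q+1$, so a sharper argument is needed in the best cases. I would close the gap by propagating the order-$q$ truncation errors back through the stable (dual) error equation to show that the dual solution error $\tilde{\mathbf{e}}$ is in fact one order higher than $T_e$, and then isolating the leading polynomial components of $T_e$ and $\tilde{\mathbf{e}}$ and applying the norm-accuracy identities of Definition \ref{Def:Norm} to them directly. This cancellation is the crux of the superconvergence result and the step most sensitive to whether $H$ is diagonal ($\rho=\min(2q+1,\tau)$) or dense ($\rho=\min(2q+1,s)$); it is where the GSBP argument genuinely departs from the classical diagonal-norm proof, since one can no longer appeal to the exactness of the associated quadrature at the full order $\tau$.
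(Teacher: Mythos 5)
Your proposal follows essentially the same route the paper intends: Theorem \ref{Thm:LinFunc1} is stated without proof precisely because it is the Hicken--Zingg adjoint-weighted-residual argument with Definition \ref{Def:Norm} substituted for quadrature exactness, and that is exactly the decomposition, adjoint identity $(\mathbf{k},\mathbf{e})_H=(\phi_\mathrm{d},T_e)_H$, and use of the two norm-accuracy conditions that you reconstruct. The one piece you leave as a sketch --- the sharper-than-Cauchy--Schwarz treatment of $(\tilde{\mathbf{e}},T_e)_H$ --- is likewise left implicit by the paper, and your order counting for it (dual solution error one order above the dual truncation error, yielding $O(h^{2q+1})$ globally) is consistent with $\rho\le 2q+1$.
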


\ \\
For SBP and GSBP operators, the order of diagonal norm matrices $\rho$ must be at least twice the order of the operator itself $q$ \cite{DBZ:2014}. Therefore, integral functionals will also converge with at least twice the order of the underlying time-marching method. In contrast, dense norm matrices are not required to be significantly more accurate than the GSBP operator; however, the operators themselves can often be more accurate for a fixed number of solution points. Therefore, given a distribution of solution points, the convergence of integral functionals is often similar. While the underlying solution computed using a dense-norm GSBP time-marching method can be more accurate, it will in general forfeit nonlinear stability (See Section \ref{Sec:Stab}).

To extend these results to the general integral functional $(\mathbf{k},\mathbf{y}_\mathrm{d})_H + \alpha \chi_{t_f}^T\mathbf{y}_\mathrm{d}\approx (\mathcal{K}(t),\mathcal{Y}) + \alpha\mathcal{Y}_f$, we must first prove that the solution approximated at the end of each time step $\tilde{y}_{t_f} = \chi_{t_f}^T\mathbf{y}_\mathrm{d}$ is superconvergent. Independent of the functionals, the result itself is significant as it implies that we can recover a high-order approximation of the solution from a lower-order discretization. The classical theory presented in \cite{Nordstrom:2014} does not extend naturally to the generalized case. Here, we present an alternate proof, which makes use of the following lemma proven in \cite{Boom:Thesis}:\\

\begin{lemma}\label{Lem:B}
	 A GSBP operator which satisfies Condition \ref{Ass:E}, also satisfies the identity: $\chi_{t_f}^T(\Theta+ \chi_{t_0}\chi_{t_0}^T)^{-1} = \mathds{1}^T$.
\end{lemma}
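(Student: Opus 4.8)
The plan is to avoid manipulating the inverse directly. Writing $M := \Theta + \chi_{t_0}\chi_{t_0}^T$, the claimed identity $\chi_{t_f}^T M^{-1} = \mathds{1}^T$ is, once $M$ is known to be invertible, equivalent to the ``forward'' identity $\mathds{1}^T M = \chi_{t_f}^T$ (right-multiply by $M$ in one direction, by $M^{-1}$ in the other). So the first step is to invoke the Corollary following Condition \ref{Ass:Rinv} with $\sigma = -1$: since $-1 < -\tfrac12$, the matrix $\Theta - \sigma\chi_{t_0}\chi_{t_0}^T = \Theta + \chi_{t_0}\chi_{t_0}^T = M$ is invertible, which both makes the inverse in the statement well-defined and legitimizes the reduction to the forward identity.

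Next I would prove $\mathds{1}^T M = \chi_{t_f}^T$ by a short computation driven entirely by the $j=0$ instances of the GSBP conditions, noting that $\mathds{1} = \mathbf{t}^0$. Splitting $\mathds{1}^T M = \mathds{1}^T\Theta + (\mathds{1}^T\chi_{t_0})\,\chi_{t_0}^T$, I would evaluate the scalar $\mathds{1}^T\chi_{t_0} = \chi_{t_0}^T\mathbf{t}^0 = t_0^{\,0} = 1$ from the $j=0$ case of Condition \ref{Ass:E}, so the second term collapses to $\chi_{t_0}^T$.

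For the first term, I would pass to the transpose and use $\Theta^T = \tilde{E} - \Theta$ from Condition \ref{Ass:E}, giving $\mathds{1}^T\Theta = (\Theta^T\mathds{1})^T = (\tilde{E}\mathds{1} - \Theta\mathds{1})^T$. Here $\Theta\mathds{1} = HD\,\mathbf{t}^0 = \mathbf{0}$ by the $j=0$ accuracy condition $D\mathbf{t}^0 = \mathbf{0}$ in \eqref{Eq:SBP_Deg}, while $\tilde{E}\mathds{1} = \chi_{t_f}(\chi_{t_f}^T\mathbf{t}^0) - \chi_{t_0}(\chi_{t_0}^T\mathbf{t}^0) = \chi_{t_f} - \chi_{t_0}$, again from the $j=0$ case of Condition \ref{Ass:E}. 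Thus $\mathds{1}^T\Theta = \chi_{t_f}^T - \chi_{t_0}^T$, and adding the $\chi_{t_0}^T$ produced by the second term yields $\mathds{1}^T M = \chi_{t_f}^T$, which is the forward identity and hence the lemma.

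I expect the only genuine subtlety to be organizing the argument around the reformulation rather than the inverse: proving $\mathds{1}^T(\Theta + \chi_{t_0}\chi_{t_0}^T) = \chi_{t_f}^T$ is immediate, but one must first secure invertibility through the Corollary so that passing between the forward and inverse forms is valid. Everything after that is a routine verification using only the $j=0$ accuracy relation $D\mathbf{t}^0 = \mathbf{0}$, the decomposition $\Theta + \Theta^T = \tilde{E}$, and the $j=0$ projection properties $\chi_{t_0}^T\mathbf{t}^0 = \chi_{t_f}^T\mathbf{t}^0 = 1$.
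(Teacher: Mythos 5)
Your argument is correct. The paper itself does not prove Lemma \ref{Lem:B} --- it is stated as "proven in \cite{Boom:Thesis}" --- so there is no in-text proof to compare against, but your reduction to the forward identity $\mathds{1}^T(\Theta+\chi_{t_0}\chi_{t_0}^T)=\chi_{t_f}^T$ is exactly the natural way to establish it, and every step checks out: $\chi_{t_0}^T\mathds{1}=\chi_{t_f}^T\mathds{1}=1$ from the $j=0$ case of Condition \ref{Ass:E}, $\Theta\mathds{1}=HD\mathbf{t}^0=\mathbf{0}$ from \eqref{Eq:SBP_Deg}, and $\tilde{E}\mathds{1}=\chi_{t_f}-\chi_{t_0}$ from \eqref{Eq:DefE}, which combine to give $\mathds{1}^T\Theta=\chi_{t_f}^T-\chi_{t_0}^T$ and hence the result. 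One small observation: the lemma as stated hypothesizes only Condition \ref{Ass:E}, which does not by itself guarantee that $\Theta+\chi_{t_0}\chi_{t_0}^T$ is invertible; you are right to pull in the Corollary to Condition \ref{Ass:Rinv} with $\sigma=-1$ for that, and indeed the companion Lemma \ref{Lem:U} in the paper explicitly cites Condition \ref{Ass:Rinv} for the same reason, so your reading is consistent with the authors' intent. Your proof is complete and self-contained; no gaps.
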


\ \\
The proof of superconvergence for the solution approximated at the end of each time step $\chi_{t_f}^T\mathbf{y}_\mathrm{d}$ is presented in the following theorem:\\[-1.5ex]

\begin{theorem}\label{Thm:SupSol}
	If $\mathbf{y}_\mathrm{d}$ is the numerical solution of the primal problem \eqref{Eq:Primal} with $\mathrm{Re}(\lambda)\le 0$ computed using a GSBP time-marching method of order $q$ and associated with a norm of order $\rho$, then the solution projected to the end of the time step $\tilde{y}_{t_f} = \chi_{t_f}^T\mathbf{y}_\mathrm{d}$ approximates $\mathcal{Y}(t_f)$ with order $\rho$.
\end{theorem}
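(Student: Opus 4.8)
The plan is to avoid estimating the pointwise error $\mathbf{y}-\mathbf{y}_\mathrm{d}$ at $t_f$ directly, and instead to recast the endpoint value $\chi_{t_f}^T\mathbf{y}_\mathrm{d}$ as an $H$-quadrature of the discrete solution, so that the functional superconvergence already established in Theorem \ref{Thm:LinFunc1} can be invoked verbatim. Throughout I take $\sigma=-1$, so that the discretization \eqref{Eq:DPrimal} is dual consistent and $\Theta+\chi_{t_0}\chi_{t_0}^T$ is invertible by the corollary to Condition \ref{Ass:Rinv}.

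The first step is to apply Lemma \ref{Lem:B}, which gives $\chi_{t_f}^T=\mathds{1}^T(\Theta+\chi_{t_0}\chi_{t_0}^T)$ and hence $\chi_{t_f}^T\mathbf{y}_\mathrm{d}=\mathds{1}^T(\Theta+\chi_{t_0}\chi_{t_0}^T)\mathbf{y}_\mathrm{d}$. I would then multiply the discrete primal problem \eqref{Eq:DPrimal} through by $H$ and rearrange it into the form $(\Theta+\chi_{t_0}\chi_{t_0}^T)\mathbf{y}_\mathrm{d}=\lambda H\mathbf{y}_\mathrm{d}+H\mathbf{g}+\chi_{t_0}\mathcal{Y}_0$, and substitute. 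Using $\mathds{1}^T\chi_{t_0}=\chi_{t_0}^T\mathbf{t}^0=1$, this collapses to
\begin{equation*}
\chi_{t_f}^T\mathbf{y}_\mathrm{d}=\lambda(\mathds{1},\mathbf{y}_\mathrm{d})_H+(\mathds{1},\mathbf{g})_H+\mathcal{Y}_0 .
\end{equation*}
In other words, the endpoint value is exactly the initial datum plus the $H$-quadrature of $\lambda\mathcal{Y}+g$ evaluated on the discrete solution, with no projection of the continuous solution to $t_f$ ever appearing.

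The second step is to compare this with the continuous identity obtained by integrating the ODE \eqref{Eq:Primal} over $[t_0,t_f]$, namely $\mathcal{Y}(t_f)=\mathcal{Y}_0+\lambda\int_{t_0}^{t_f}\mathcal{Y}\,dt+\int_{t_0}^{t_f}g\,dt$. Subtracting, the initial data cancel and one is left with
\begin{equation*}
\chi_{t_f}^T\mathbf{y}_\mathrm{d}-\mathcal{Y}(t_f)=\lambda\Big[(\mathds{1},\mathbf{y}_\mathrm{d})_H-{\textstyle\int_{t_0}^{t_f}}\mathcal{Y}\,dt\Big]+\Big[(\mathds{1},\mathbf{g})_H-{\textstyle\int_{t_0}^{t_f}}g\,dt\Big].
\end{equation*}
The first bracket is precisely the functional error $J_H(\mathbf{y}_\mathrm{d})-\mathcal{J}(\mathcal{Y})$ for the choice $\mathbf{k}=\mathds{1}$ (i.e.\ $\mathcal{K}\equiv1$), which is of order $\rho$ by Theorem \ref{Thm:LinFunc1}, since $\mathrm{Re}(\lambda)\le0$ and $\mathcal{Y}\in C^{\rho}$. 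The second bracket is the error of the $H$-quadrature applied to $g$, which is of order at least $\rho$ directly from the first identity in Definition \ref{Def:Norm} with $\mathcal{U}\equiv1$, $\mathcal{V}=g$. Both contributions are therefore of order $\rho$, which gives the claim.

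The main obstacle, and the reason the classical argument of \cite{Nordstrom:2014} does not transfer, is that in the generalized setting $t_f$ need not be a solution point, so $\chi_{t_f}^T\mathbf{y}\neq\mathcal{Y}(t_f)$; a naive splitting of $\chi_{t_f}^T(\mathbf{y}-\mathbf{y}_\mathrm{d})$ would leave an uncontrolled boundary-extrapolation term of order only $r+1$, which need not reach $\rho$. Lemma \ref{Lem:B} is exactly what removes this difficulty: it trades the extrapolation operator $\chi_{t_f}^T$ for the exact quadrature functional $\mathds{1}^T H$ acting through the governing equation, reducing the endpoint error entirely to the already-proven functional superconvergence. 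The only remaining points requiring care are the invertibility of $\Theta+\chi_{t_0}\chi_{t_0}^T$ (guaranteed for $\sigma=-1$) and the verification that the constant functional $\mathcal{K}\equiv1$ meets the smoothness hypothesis of Theorem \ref{Thm:LinFunc1}.
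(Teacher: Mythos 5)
Your proposal is correct and follows essentially the same route as the paper's own proof: both use Lemma \ref{Lem:B} to convert $\chi_{t_f}^T\mathbf{y}_\mathrm{d}$ into the exact identity $\lambda(\mathds{1},\mathbf{y}_\mathrm{d})_H + (\mathds{1},\mathbf{g})_H + \mathcal{Y}_0$, and then invoke Theorem \ref{Thm:LinFunc1} together with the quadrature accuracy of $H$ from Definition \ref{Def:Norm} before comparing with the integrated form of \eqref{Eq:Primal}. The only difference is cosmetic (you apply the transpose of the lemma rather than inverting $\Theta+\chi_{t_0}\chi_{t_0}^T$ explicitly), so no further comparison is needed.
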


\
\begin{proof}
	Consider a GSBP time-marching method applied to the primal problem (\ref{Eq:Primal}):
	\begin{equation}
		D\mathbf{y}_\mathrm{d} = \lambda\mathbf{y}_\mathrm{d} + \mathbf{g} -
			H^{-1}\chi_{t_0}\left(\chi_{t_0}^T \mathbf{y}_\mathrm{d} - \mathcal{Y}_0\right).
	\end{equation}
	Rearranging, and left-multiplying by $\chi_{t_f}^T$ gives
	\begin{equation}
		\tilde{y}_{t_f} = \chi_{t_f}^T\mathbf{y}_\mathrm{d} = \chi_{t_f}^T(\Theta+\chi_{t_0}\chi_{t_0}^T)^{-1}H(\lambda\mathbf{y}_\mathrm{d} + G + H^{-1}\chi_{t_0}\mathcal{Y}_0).
	\end{equation}
	Simplifying using Lemma \ref{Lem:B}: $\chi_{t_f}^T(\Theta-\sigma \chi_{t_0}\chi_{t_0}^T)^{-1} = \mathds{1}^T$, gives:
	\begin{equation}
		\tilde{y}_{t_f} = (\mathds{1},\lambda\mathbf{y}_\mathrm{d})_H + (\mathds{1},G)_H + \mathds{1}^T\chi_{t_0}\mathcal{Y}_0.
	\end{equation}
	Simplifying again using Condition \ref{Ass:E}: $\chi_{t_0}^T\mathbf{t}^i = t_0^i$ for $i\in[0,r\ge q\ge 1]$, Theorem \ref{Thm:LinFunc1}: $(\mathcal{K},\mathcal{Y})= (\mathbf{k},\mathbf{y}_\mathrm{d})_H+\mathcal{O}(\Delta t_n^{\rho})$, and Definition \ref{Def:Norm},  yields:
	\begin{equation}
		\begin{array}{rl}
			\tilde{y}_{t_f} =& (1,\lambda\mathcal{Y}) + (1,\mathcal{G}) + \mathcal{Y}_0 + \mathcal{O}(\Delta t_n^{\rho}) \\[1ex]
			
			=& \int_{t_0}^{t_f} (\lambda \mathcal{Y} +\mathcal{G})  dt + \mathcal{Y}_0 + \mathcal{O}(\Delta t_n^{\rho})	.
			
		\end{array}
	\end{equation}
	Substituting using the continuous primal problem \eqref{Eq:Primal}: $\mathcal{Y}^\prime = \lambda \mathcal{Y} +\mathcal{G}$, gives
	\begin{equation}
		\begin{array}{rl}
			\tilde{y}_{t_f} =& \int_{t_0}^{t_f} \mathcal{Y}^\prime dt + \mathcal{Y}_0 + \mathcal{O}(\Delta t_n^{\rho}) \\[1ex]
			
			=& \mathcal{Y}(t_f) + \mathcal{O}(\Delta t_n^{\rho}),
		\end{array}
	\end{equation}
	thus completing the proof.
\end{proof}

\ \\
Thus, we can construct high-order approximations of the solution at the end of each time step $\tilde{y}_{t_f} = \chi_{t_f}^T\mathbf{y}_\mathrm{d}$ from a lower-order discretization. With this result, the superconvergence of the general integral functional $\mathcal{J}(\mathcal{Y}) = (\mathcal{K}(t),\mathcal{Y}) + \alpha\mathcal{Y}_f$ follows immediately by combining Theorems \ref{Thm:LinFunc1} and \ref{Thm:SupSol}:\\[-1.5ex]

\begin{theorem}\label{Thm:LinFunc2}
	If $\mathbf{y}_\mathrm{d}$ is the numerical solution of the primal problem (\ref{Eq:Primal}) with $\mathrm{Re}(\lambda)\le 0$ computed using a GSBP time-marching method of order $q$ and associated with a norm of order $\rho$, then the discrete functional $J_H(\mathbf{y}_\mathrm{d}) = (\mathbf{k},\mathbf{y}_\mathrm{d})_H + \alpha \chi_{t_f}^T\mathbf{y}_\mathrm{d}$ approximates $\mathcal{J}(\mathcal{Y}) = (\mathcal{K}(t),\mathcal{Y})+\alpha\mathcal{Y}(T)$ for $\mathcal{K}(t)\in C^\rho$ with order $\rho$.
\end{theorem}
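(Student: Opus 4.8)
The plan is to obtain the result as a direct superposition of Theorems \ref{Thm:LinFunc1} and \ref{Thm:SupSol}, exploiting the linearity of the discrete functional in its two pieces. The crucial preliminary observation I would make explicit is that the primal discrete solution $\mathbf{y}_\mathrm{d}$ is fixed by the discretization \eqref{Eq:DPrimal} alone and does not depend on the functional weight $\alpha$; hence both theorems may legitimately be applied to the \emph{same} vector $\mathbf{y}_\mathrm{d}$.

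First I would split the functional error into two independent contributions,
\begin{equation}
J_H(\mathbf{y}_\mathrm{d}) - \mathcal{J}(\mathcal{Y}) = \big[(\mathbf{k},\mathbf{y}_\mathrm{d})_H - (\mathcal{K},\mathcal{Y})\big] + \alpha\big[\chi_{t_f}^T\mathbf{y}_\mathrm{d} - \mathcal{Y}(t_f)\big].
\end{equation}
The first bracket is bounded by $\mathcal{O}(\Delta t_n^{\rho})$ directly by Theorem \ref{Thm:LinFunc1} (the $\alpha=0$ case), and the second bracket is exactly the endpoint error shown to be $\mathcal{O}(\Delta t_n^{\rho})$ in Theorem \ref{Thm:SupSol}. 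Since $\alpha$ is a fixed constant independent of the step size, multiplication by $\alpha$ leaves the order unchanged, and summing the two $\mathcal{O}(\Delta t_n^{\rho})$ terms gives $J_H(\mathbf{y}_\mathrm{d}) - \mathcal{J}(\mathcal{Y}) = \mathcal{O}(\Delta t_n^{\rho})$, as claimed.

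I do not expect a genuine analytical obstacle, since the substantive work is already carried by the two prior theorems and the statement is essentially their sum. The only points needing care, which I would flag rather than belabour, are two bookkeeping matters: that both theorems are invoked for the identical solution $\mathbf{y}_\mathrm{d}$ (valid precisely because $\mathbf{y}_\mathrm{d}$ is blind to $\alpha$), and that the smoothness hypotheses match — the statement assumes $\mathcal{K}(t)\in C^{\rho}$, and I would note that the continuous solution $\mathcal{Y}$ inherits enough regularity from the data through $\mathcal{Y}^\prime=\lambda\mathcal{Y}+\mathcal{G}$ for the product $\mathcal{K}\mathcal{Y}$ to lie in $C^{\rho}$ as Theorem \ref{Thm:LinFunc1} requires.
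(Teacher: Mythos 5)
Your proposal is correct and follows exactly the route the paper takes: the paper states that the result ``follows immediately by combining Theorems \ref{Thm:LinFunc1} and \ref{Thm:SupSol},'' which is precisely your linear splitting of the functional error into the inner-product term and the endpoint term. Your additional remarks on the independence of $\mathbf{y}_\mathrm{d}$ from $\alpha$ and on the regularity bookkeeping are sound and, if anything, slightly more explicit than the paper itself.
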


\ \\
In summary, we have shown that both integral functionals of the solution and approximations of the solution itself at the end of each time step are superconvergent, provided the solution is sufficiently smooth and the discretization is dual-consistent. The rate of superconvergence is related to the accuracy with which the norm matrix can approximate the continuous L$_2$ inner product, in particular inner products of the primal and dual problems. For diagonal-norm GSBP time-marching methods the rate of superconvergence is at least twice the order of the operator itself. For dense-norm GSBP time-marching methods, superconvergence is often less notable; however, the methods can be of higher-order to begin with \cite{DBZ:2014,Boom:Thesis,Strand1994}. One potential drawback of dense-norm discretizations is the loss of nonlinear stability (See Section \ref{Sec:Stab}).

Relative to classical FD-SBP time-marching methods, GSBP schemes can be constructed of higher order for a given number of solution points \cite{DBZ:2014}. This includes both the order of the operator itself as well as the norm matrix. Therefore, the extended superconvergence theory implies that GSBP time-marching methods not only have the potential to be more efficient for computing the pointwise solution, but the computation of integral functionals as well.

%==================================================================================================================
%
\section{Stability}\label{Sec:Stab}
%
%==================================================================================================================

Thus far it has been shown that GBSP time-marching methods retain the superconvergence of the classical SBP approach. However, these methods are only desirable for stiff IVPs if they also maintain the same stability properties. In this section, several linear and nonlinear stability criteria are considered. The analysis of these criteria for classical FD-SBP time-marching methods was presented in \cite{Nordstrom:2014}. The primary challenge to extend the theory for GSBP time-marching methods is to account for the fact that $\chi_{t_0}^T\mathbf{y}$ is not in general equal to $\mathcal{Y}_0$.

%------------------------------------------------------------------------------------------------------------------
%
\subsection{Linear Stability}\label{Sec:LStab}
%
%------------------------------------------------------------------------------------------------------------------

While many problems of interest are nonlinear, a locally linear assumption is often a reasonable approximation. An example is the solution to the compressible Navier-Stokes equations. As a result, linear stability is of significant interest. To begin the discussion of linear stability, consider the scalar linear IVP,
\begin{equation}\label{Eq:LIVP}
	\mathcal{Y}^\prime = \lambda\mathcal{Y},\quad \mathcal{Y}(t_0) = \mathcal{Y}_0, \quad\text{with}\quad t_0 \le t \le t_f,
\end{equation}
where $\mathcal{Y}\in\mathbb{C}$, and $\lambda$ is a complex constant. It is well known that \eqref{Eq:LIVP} is inherently stable for $\mathrm{Re}(\lambda)\le 0$ (\textit{e.g.}\ \cite{Lomax:2001,Dahlquist:1963,Nordstrom:2014}). A numerical method applied to (\ref{Eq:LIVP}) is called A-stable if $\mathrm{Re}(\lambda)\le 0$ implies that 
\begin{equation}
	| \tilde{y}_{t_f} | \le | \mathcal{Y}_0 |,
\end{equation}
where $\tilde{y}_{t_f} \approx \mathcal{Y}(t_f)$ is an approximation of the solution at $t_f$. Classical SBP time-marching methods were shown to be A-stable in \cite{Nordstrom:2014}, where $\tilde{y}_{t_f}$ is obtained from $\mathbf{y}_{\mathrm{d},n}$. The proof extends naturally taking $\tilde{y}_{t_f} = \chi_{t_f}^T\mathbf{y}_{\mathrm{d}}$. Thus, we present the following theorem without proof:\\[-1.5ex]

\begin{theorem}\label{Thm:AStb}
All GSBP time-marching methods are A-stable.
\end{theorem}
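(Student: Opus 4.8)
The plan is to reproduce the continuous energy estimate for \eqref{Eq:LIVP} at the discrete level, with the summation-by-parts identity of Condition \ref{Ass:E} playing the role of integration-by-parts. First I would specialize the GSBP-SAT discretization \eqref{Eq:DPrimal} to the homogeneous linear problem $\mathcal{Y}^\prime=\lambda\mathcal{Y}$ by setting $\mathbf{g}=\mathbf{0}$ and, for a GSBP time-marching method, $\sigma=-1$, and then clear the inverse norm by left-multiplying through by $H$ (using $HD=\Theta$) to obtain
\[
\Theta\mathbf{y}_{\mathrm{d}} = \lambda H\mathbf{y}_{\mathrm{d}} - \chi_{t_0}\chi_{t_0}^T\mathbf{y}_{\mathrm{d}} + \chi_{t_0}\mathcal{Y}_0 .
\]

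Next I would apply the energy method: left-multiply by $\mathbf{y}_{\mathrm{d}}^*$, form the conjugate transpose of the resulting scalar identity (so $\Theta$ is replaced by $\Theta^T$ and $\lambda\mapsto\bar{\lambda}$, while the real quantities $\mathbf{y}_{\mathrm{d}}^*H\mathbf{y}_{\mathrm{d}}$ and $|\chi_{t_0}^T\mathbf{y}_{\mathrm{d}}|^2$ are unchanged), and add the two. Invoking Condition \ref{Ass:E} to write $\mathbf{y}_{\mathrm{d}}^*(\Theta+\Theta^T)\mathbf{y}_{\mathrm{d}} = \mathbf{y}_{\mathrm{d}}^*(\chi_{t_f}\chi_{t_f}^T-\chi_{t_0}\chi_{t_0}^T)\mathbf{y}_{\mathrm{d}} = |\tilde{y}_{t_f}|^2 - |\tilde{y}_{t_0}|^2$, where $\tilde{y}_{t_f}=\chi_{t_f}^T\mathbf{y}_{\mathrm{d}}$ and $\tilde{y}_{t_0}=\chi_{t_0}^T\mathbf{y}_{\mathrm{d}}$, the balance collapses to
\[
|\tilde{y}_{t_f}|^2 = 2\,\mathrm{Re}(\lambda)\,\mathbf{y}_{\mathrm{d}}^*H\mathbf{y}_{\mathrm{d}} - |\tilde{y}_{t_0}|^2 + 2\,\mathrm{Re}\!\bigl(\overline{\tilde{y}_{t_0}}\,\mathcal{Y}_0\bigr).
\]

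The step that makes the generalized argument work is completing the square in the boundary penalty: using $-|\tilde{y}_{t_0}|^2 + 2\,\mathrm{Re}(\overline{\tilde{y}_{t_0}}\mathcal{Y}_0) = |\mathcal{Y}_0|^2 - |\tilde{y}_{t_0}-\mathcal{Y}_0|^2$, the estimate becomes
\[
|\tilde{y}_{t_f}|^2 = |\mathcal{Y}_0|^2 + 2\,\mathrm{Re}(\lambda)\,\mathbf{y}_{\mathrm{d}}^*H\mathbf{y}_{\mathrm{d}} - |\tilde{y}_{t_0}-\mathcal{Y}_0|^2 .
\]
Since $H$ is SPD (Definition \ref{Def:SBPGen}) the quadratic form $\mathbf{y}_{\mathrm{d}}^*H\mathbf{y}_{\mathrm{d}}$ is nonnegative, so for $\mathrm{Re}(\lambda)\le 0$ the second term is nonpositive, and the penalty term $-|\tilde{y}_{t_0}-\mathcal{Y}_0|^2$ is manifestly nonpositive; hence $|\tilde{y}_{t_f}|\le|\mathcal{Y}_0|$, which is A-stability.

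I expect the only real obstacle to be conceptual rather than computational, and it is precisely the point flagged at the start of Section \ref{Sec:Stab}: in the classical case $\chi_{t_0}^T\mathbf{y}_{\mathrm{d}}=\mathcal{Y}_0$ holds exactly at the boundary node, so the initial-condition SAT drops out of the energy balance, whereas here $\chi_{t_0}$ is merely a projection and $\tilde{y}_{t_0}\neq\mathcal{Y}_0$ in general. The completing-the-square identity is what reconciles this — it repackages the extra SAT contribution as the single nonpositive term $-|\tilde{y}_{t_0}-\mathcal{Y}_0|^2$, showing that the weak imposition of the initial condition dissipates rather than injects energy. Everything else (the SBP identity, the reality of $H$, $\Theta$, and $\chi_{t_0}$, and the positive definiteness of $H$) is routine.
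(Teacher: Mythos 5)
Your proof is correct and is precisely the argument the paper has in mind: the theorem is stated without proof on the grounds that the classical energy-method proof of \cite{Nordstrom:2014} ``extends naturally taking $\tilde{y}_{t_f}=\chi_{t_f}^T\mathbf{y}_{\mathrm{d}}$,'' and your derivation is exactly that extension, with the completing-the-square step correctly absorbing the SAT contribution into the nonpositive term $-|\tilde{y}_{t_0}-\mathcal{Y}_0|^2$ now that $\chi_{t_0}^T\mathbf{y}_{\mathrm{d}}\neq\mathcal{Y}_0$. No gaps.
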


\ \\
A-stability guarantees that all modes are stable; however those associated with large eigenvalues may be damped very slowly. Hence, a stronger condition known as L-stability \cite{Ehle:1969} is often preferred. A numerical method applied to (\ref{Eq:LIVP}) is called L-stable if it is A-stable, and furthermore $\mathrm{Re}(\lambda)\le 0$ implies that 
\begin{equation}
	| \tilde{y}_{t_f} |\to 0 \text{ as } |\lambda|  \to \infty.
\end{equation}	
Classical SBP time-marching methods were also shown in \cite{Nordstrom:2014} to have this property; however, the proof does not extended to the generalized case. Hence, we present an alternate approach inspired by Proposition 3.8 in \cite{HairerII}. The proof is simplified by first introducing the following lemma proven in \cite{Boom:Thesis}: \\

\begin{lemma}\label{Lem:U}
	 A GSBP operator which satisfies Condition \ref{Ass:Rinv}, also satisfies the identity: $(\Theta + \chi_{t_0}\chi_{t_0}^T)^{-1}\chi_{t_0} = \mathds{1}$.
\end{lemma}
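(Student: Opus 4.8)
The plan is to avoid computing the inverse directly and instead verify the identity in its equivalent multiplicative form. By the Corollary following Condition~\ref{Ass:Rinv}, taking $\sigma=-1<-\frac{1}{2}$ shows that $\Theta-\sigma\chi_{t_0}\chi_{t_0}^T=\Theta+\chi_{t_0}\chi_{t_0}^T$ is invertible. Hence the claimed identity $(\Theta+\chi_{t_0}\chi_{t_0}^T)^{-1}\chi_{t_0}=\mathds{1}$ is equivalent to
\begin{equation*}
(\Theta+\chi_{t_0}\chi_{t_0}^T)\,\mathds{1}=\chi_{t_0},
\end{equation*}
which I would establish by evaluating the two terms on the left separately.

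For the first term, I would use $\Theta=HD$ together with the degree condition \eqref{Eq:SBP_Deg}. Writing $\mathds{1}=\mathbf{t}^0$ and setting $j=0$ gives $D\mathbf{t}^0=\mathbf{0}$, so that $\Theta\mathds{1}=HD\mathbf{t}^0=\mathbf{0}$. For the second term, I would invoke the consistency of the projection operator from Condition~\ref{Ass:E}: setting $j=0$ in $\chi_{t_0}^T\mathbf{t}^j=t_0^j$ yields $\chi_{t_0}^T\mathds{1}=t_0^0=1$, whence $\chi_{t_0}\chi_{t_0}^T\mathds{1}=\chi_{t_0}$. Adding the two contributions gives $(\Theta+\chi_{t_0}\chi_{t_0}^T)\mathds{1}=\mathbf{0}+\chi_{t_0}=\chi_{t_0}$, and left-multiplying by the inverse $(\Theta+\chi_{t_0}\chi_{t_0}^T)^{-1}$ completes the argument.

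The argument is essentially a one-line verification once the multiplicative form is chosen, so there is no serious analytic obstacle. The only points requiring care are bookkeeping: confirming that the relevant matrix is exactly the $\sigma=-1$ instance of the operator in Condition~\ref{Ass:Rinv} so that the Corollary supplies invertibility, and correctly identifying $\mathds{1}$ with $\mathbf{t}^0$ so that both the degree condition and the projection condition can be applied at $j=0$. I would expect the companion Lemma~\ref{Lem:B} to admit an analogous proof by transposition, using Condition~\ref{Ass:E} to evaluate $\mathds{1}^T\Theta=\chi_{t_f}^T-\chi_{t_0}^T$ and hence $\mathds{1}^T(\Theta+\chi_{t_0}\chi_{t_0}^T)=\chi_{t_f}^T$.
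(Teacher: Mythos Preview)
Your argument is correct. The paper itself does not give a proof of this lemma but defers it to the external reference \cite{Boom:Thesis}; the verification you outline---reducing to $(\Theta+\chi_{t_0}\chi_{t_0}^T)\mathds{1}=\chi_{t_0}$ via invertibility from the Corollary to Condition~\ref{Ass:Rinv}, then using $D\mathbf{t}^0=\mathbf{0}$ and $\chi_{t_0}^T\mathbf{t}^0=1$---is the natural one and is entirely sound. Your remark on Lemma~\ref{Lem:B} is likewise correct: $\mathds{1}^T\Theta=(\Theta^T\mathds{1})^T=((\tilde{E}-\Theta)\mathds{1})^T=(\chi_{t_f}-\chi_{t_0})^T$, so $\mathds{1}^T(\Theta+\chi_{t_0}\chi_{t_0}^T)=\chi_{t_f}^T$.
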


\ \\
The L-stability of GSBP time-marching methods is now proven:\\[-1.5ex]

\begin{theorem}\label{Thm:LStb}
All GSBP time-marching methods which satisfy Condition \ref{Ass:Rinv} are L-stable.
\end{theorem}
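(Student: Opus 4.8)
The plan is to reduce the L-stability requirement to a single limit statement about the method's stability function and then to evaluate that limit. Since Theorem \ref{Thm:AStb} already establishes A-stability, by the definition of L-stability it remains only to show that $|\tilde{y}_{t_f}|\to 0$ as $|\lambda|\to\infty$, where $\tilde{y}_{t_f}=\chi_{t_f}^T\mathbf{y}_\mathrm{d}$ is the solution projected to the end of the time step.

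First I would write the GSBP time-marching discretization of the linear test problem \eqref{Eq:LIVP} (that is, \eqref{Eq:DPrimal} with $\mathbf{g}=0$ and $\sigma=-1$) and collect the unknown on the left-hand side, giving
\[
	(\Theta + \chi_{t_0}\chi_{t_0}^T - \lambda H)\mathbf{y}_\mathrm{d} = \chi_{t_0}\mathcal{Y}_0.
\]
By the Corollary to Condition \ref{Ass:Rinv} the matrix $M:=\Theta+\chi_{t_0}\chi_{t_0}^T$ is invertible, so I can factor $M-\lambda H = M(I-\lambda M^{-1}H)$ and introduce $A:=M^{-1}H$. Left-multiplying by $\chi_{t_f}^T$ and applying Lemma \ref{Lem:U}, which gives $M^{-1}\chi_{t_0}=\mathds{1}$, the stability function collapses to the compact form
\[
	\tilde{y}_{t_f} = \chi_{t_f}^T(I-\lambda A)^{-1}\mathds{1}\,\mathcal{Y}_0,
\]
which is exactly the Runge--Kutta-type stability function that motivates the comparison with Proposition 3.8 of \cite{HairerII}.

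It then remains to take the limit $|\lambda|\to\infty$. Since $H$ is SPD and $M$ is invertible, $A=M^{-1}H$ is invertible, and $\det(I-\lambda A)$ is a polynomial of degree $n$ in $\lambda$, hence nonzero for all sufficiently large $|\lambda|$, so $\tilde{y}_{t_f}$ is well defined along the limiting path. I would then write $(I-\lambda A)^{-1}=-\lambda^{-1}A^{-1}(I-\lambda^{-1}A^{-1})^{-1}$ and observe that the bracketed factor tends to the identity while the prefactor $-\lambda^{-1}A^{-1}\to 0$. Hence $\tilde{y}_{t_f}\to 0$, which together with A-stability completes the proof.

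The main obstacle is not the limit itself, which is routine once the stability function is in hand, but rather securing the two structural facts that make the reduction valid: the invertibility of $M=\Theta+\chi_{t_0}\chi_{t_0}^T$ (and hence of $A$), supplied by Condition \ref{Ass:Rinv} with $\sigma=-1<-\tfrac{1}{2}$, and the identity $M^{-1}\chi_{t_0}=\mathds{1}$ of Lemma \ref{Lem:U}, which is what allows the initial-condition projection $\chi_{t_0}$ to be replaced by $\mathds{1}$ and thereby recovers the clean Runge--Kutta form whose behaviour at infinity is transparent. Without the invertibility of $A$ the leading term of $(I-\lambda A)^{-1}$ need not decay like $\lambda^{-1}$, so it is precisely the spectral hypothesis of Condition \ref{Ass:Rinv} that underpins the argument.
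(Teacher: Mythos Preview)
Your proposal is correct and follows essentially the same route as the paper: both derive the stability function by applying Condition~\ref{Ass:Rinv} (invertibility of $M=\Theta+\chi_{t_0}\chi_{t_0}^T$) and Lemma~\ref{Lem:U} ($M^{-1}\chi_{t_0}=\mathds{1}$), then take the limit $|\lambda|\to\infty$ using the invertibility of $A=M^{-1}H$. The only cosmetic difference is that the paper first rewrites the stability function in the Runge--Kutta form $1+\lambda\chi_{t_f}^T A(I-\lambda A)^{-1}\mathds{1}$ and therefore needs $\chi_{t_f}^T\mathds{1}=1$ from Condition~\ref{Ass:E} to see the limit vanish, whereas your more compact form $\chi_{t_f}^T(I-\lambda A)^{-1}\mathds{1}$ makes the decay immediate once $A$ is invertible.
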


\
\begin{proof}
	A-stability follows from Theorem \ref{Thm:AStb}. What remains is to prove is $| \tilde{y}_{t_f} |\to 0 \text{ as } |\lambda| \to \infty$. Applying a GSBP time-marching method to the linear IVP \eqref{Eq:LIVP} gives:
	\begin{equation}\label{Eq:DLIVP}
		D\mathbf{y}_\mathrm{d} = \lambda\mathbf{y}_\mathrm{d} - 
		H^{-1}\chi_{t_0}\left(\chi_{t_0}^T \mathbf{y}_\mathrm{d} - \mathcal{Y}_0\right).
	\end{equation}
	Rearranging using Condition \ref{Ass:Rinv}: $(\Theta+\chi_{t_0}\chi_{t_0}^T)$ has strictly positive eigenvalues and is therefore invertible, and Lemma \ref{Lem:U}: $(\Theta + \chi_{t_0}\chi_{t_0}^T)^{-1}\chi_{t_0} = \mathds{1}$, yields:
	\begin{equation}\label{Eq:Lstb1}
		\mathbf{y}_\mathrm{d} = [I-\lambda(\Theta+\chi_{t_0}\chi_{t_0}^T)^{-1}H]^{-1}\mathds{1}\mathcal{Y}_0.
	\end{equation}
	In a similar fashion, an expression for $\tilde{y}_{t_f}$ can be constructed:
	\begin{equation}\label{Eq:Lstb2}
			\tilde{y}_{t_f} = \chi_{t_f}^T\mathbf{y}_\mathrm{d} = \chi_{t_f}^T(\Theta+\chi_{t_0}\chi_{t_0}^T)^{-1}H\lambda\mathbf{y}_\mathrm{d} + 
		\chi_{t_f}^T\mathds{1}\mathcal{Y}_0.
	\end{equation}
	Inserting (\ref{Eq:Lstb1}) into (\ref{Eq:Lstb2}) and applying Condition \ref{Ass:E}: $\chi_{t_f}^T\mathbf{t}^i=t_f^i$ for $i\in[0,r\ge q\ge 1]$, yields:
	\begin{equation}
			\tilde{y}_{t_f} = (1 + \lambda \chi_{t_f}^T(\Theta+\chi_{t_0}\chi_{t_0}^T)^{-1}H [I-\lambda (\Theta+\chi_{t_0}\chi_{t_0}^T)^{-1}H]^{-1}\mathds{1})\mathcal{Y}_0.
	\end{equation}
	Taking the limit as $|\lambda|\to\infty$:
	\begin{equation}
			\tilde{y}_{t_f} = (1 - \chi_{t_f}^T(\Theta+\chi_{t_0}\chi_{t_0}^T)^{-1}H [(\Theta+\chi_{t_0}\chi_{t_0}^T)^{-1}H]^{-1}\mathds{1})\mathcal{Y}_0.
	\end{equation}
	and simplifying using Condition \ref{Ass:E}: $\chi_{t_f}^T\mathbf{t}^i=t_f^i$ for $i\in[0,r\ge q\ge 1]$, yields:
	\begin{equation}
			\tilde{y}_{t_f} = (1 - \chi_{t_f}^T\mathds{1})\mathcal{Y}_0 = 0,
	\end{equation}
	completing the proof.
\end{proof}

\ \\[-4.5ex]

In summary, all GSBP time-marching methods are unconditionally stable for linear problems, and furthermore provide damping of stiff parasitic modes. These conditions are derived for linear problems, but are often sufficient for nonlinear problems as well. 

%------------------------------------------------------------------------------------------------------------------
%
\subsection{Nonlinear Stability and Contractivity}\label{Sec:NLStab}
% 
%------------------------------------------------------------------------------------------------------------------

While linear stability is often sufficient, there are cases in which nonlinear stability is required. In this section, we show that GSBP time-marching methods associated with a diagonal norm matrix retain BN-stability and energy stability.

To begin, consider a subset of the general IVP (\ref{Eq:NonIVP}) which satisfy the one-sided Lipschitz condition \cite{Dekker:1984}:
\begin{equation}\label{Eq:OSLipschitz}
	\mathrm{Re}[(\mathcal{F}(\mathcal{Y},t) - \mathcal{F}(\mathcal{Z},t)\ ,\ \mathcal{Y}-\mathcal{Z})_P] \le 
	\nu || \mathcal{Y}-\mathcal{Z} ||^2,\quad \forall\mathcal{Y},\mathcal{Z}\in\mathbb{C}^M,\text{ and }t\in\mathbb{R}
\end{equation}
where $\nu\in\mathbb{R}$ is the one-sided Lipschitz constant, and $P$ is an SPD matrix defining a discrete inner product and norm over $\mathbb{C}^M$:
\begin{equation}\label{Eq:DNorm}
	(\mathcal{Y},\mathcal{Z})_P = \mathcal{Y}^*P\mathcal{Z}, \quad
	||\mathcal{Y}||^2_P = \mathcal{Y}^*P\mathcal{Y}.
\end{equation}
An IVP (\ref{Eq:NonIVP}) is said to be contractive if it satisfies the one-sided Lipschitz condition with $\nu \le 0$. The significance of this condition is that the distance between any two solutions, $|| \mathcal{Y}(t)-\mathcal{Z}(t) ||$, does not increase with time \cite{HairerII}. A numerical method applied to the IVP (\ref{Eq:NonIVP}) is called BN-stable\footnote{BN-stability is sometimes referred to as B-stability when the distinction between autonomous and non-autonomous ODEs is not made (Compare Definitions 2.9.2 and 2.9.3 of \cite{Jackiewicz:Book} and Definition 12.2 in\cite{HairerII}).} if the one-sided Lipschitz condition (\ref{Eq:OSLipschitz}) with $\nu\le 0$ implies that 
\begin{equation}
	|| \tilde{y}_{t_f}-\tilde{z}_{t_f} ||_P \le || \mathcal{Y}_0-\mathcal{Z}_0 ||_P,
\end{equation}
where $\tilde{y}_{t_f}$ and $\tilde{z}_{t_f}$ are approximations of the solutions at time $t_f$ given initial data $\mathcal{Y}_0$ and $\mathcal{Z}_0$ respectively.

A similar nonlinear stability definition was presented in \cite{Burrage:1980} for autonomous IVPs with monotonic functions. An extension of this idea for non-autonomous IVPs was introduced in \cite{Nordstrom:2014}, called energy stability. A numerical method is called energy stable if
\begin{equation}\label{Eq:NStab}
	\mathrm{Re}[(\mathcal{Y}(t),\mathcal{F}(\mathcal{Y},t))_P] \le 0,\quad 
	\forall\mathcal{Y}\in\mathbb{C}^M,\text{ and }t\in\mathbb{R}
\end{equation}
implies that 
\begin{equation}
	|| \tilde{y}_{t_f} ||_P \le || \mathcal{Y}_0 ||_P.
\end{equation}

Classical diagonal-norm FD-SBP time-marching methods have been shown to be both BN-stable and energy stable, where $\tilde{y}_{t_f} = y_{\mathrm{d},n}$ \cite{Nordstrom:2014}. The proofs extend immediately for GSBP time-marching methods taking $\tilde{y}_{t_f} = \chi^T_{t_f}\mathbf{y}_\mathrm{d}$. Therefore, the following Theorem is presented without proof:\\[-1.5ex]

\begin{theorem}\label{Thm:EngyStb_diag}
All diagonal-norm GSBP time-marching methods are BN-stable, energy stable, and hence monotonic.
\end{theorem}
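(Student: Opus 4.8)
The plan is to mirror the classical energy-method argument of \cite{Nordstrom:2014}, carrying out the estimate in the combined inner product that weights the system components by $P$ and the solution points by the diagonal norm $H$, while carefully tracking the penalty mismatch that arises because $\chi_{t_0}^T\mathbf{y}_\mathrm{d}\neq\mathcal{Y}_0$ in the generalized setting. For BN-stability, I would take two numerical solutions $\mathbf{y}_\mathrm{d}$ and $\mathbf{z}_\mathrm{d}$ of the dual-consistent ($\sigma=-1$) GSBP discretization of \eqref{Eq:NonIVP}, subtract the two discrete systems, and set $\mathbf{w}=\mathbf{y}_\mathrm{d}-\mathbf{z}_\mathrm{d}$ and $\Delta\mathbf{f}=\mathbf{f}(\mathbf{y}_\mathrm{d})-\mathbf{f}(\mathbf{z}_\mathrm{d})$. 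Left-multiplying by $H$ gives $\Theta\mathbf{w}=H\,\Delta\mathbf{f}-\chi_{t_0}\big(\chi_{t_0}^T\mathbf{w}-(\mathcal{Y}_0-\mathcal{Z}_0)\big)$.

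Next I would form $2\,\mathrm{Re}$ of the $P$-weighted inner product of $\mathbf{w}$ with this identity. The left side becomes $\mathbf{w}^*(\Theta+\Theta^T)\mathbf{w}$, which by Condition~\ref{Ass:E} equals $\mathbf{w}^*\tilde{E}\mathbf{w}=\|\chi_{t_f}^T\mathbf{w}\|_P^2-\|\chi_{t_0}^T\mathbf{w}\|_P^2$; here $\chi_{t_f}^T\mathbf{w}=\tilde{y}_{t_f}-\tilde{z}_{t_f}$ is precisely the difference of the projected end-of-step solutions. This is the step that isolates the quantity to be bounded. The right-hand side splits into a volume term $2\,\mathrm{Re}(\mathbf{w}^*H\,\Delta\mathbf{f})$ and boundary/penalty terms $-2\|\chi_{t_0}^T\mathbf{w}\|_P^2+2\,\mathrm{Re}\big((\chi_{t_0}^T\mathbf{w})^*P(\mathcal{Y}_0-\mathcal{Z}_0)\big)$.

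The crucial use of diagonality enters in the volume term: because $H=\mathrm{diag}(h_1,\dots,h_n)$ with every $h_i>0$, this term decouples across solution points as $\sum_i h_i\,\mathrm{Re}[(\Delta\mathbf{f}_i,\mathbf{w}_i)_P]$, so the one-sided Lipschitz condition \eqref{Eq:OSLipschitz} with $\nu\le0$ may be applied point-by-point to conclude it is $\le\nu\sum_i h_i\|\mathbf{w}_i\|^2\le0$. (For a dense $H$ the cross terms would couple $\mathbf{w}_i$ with $\Delta\mathbf{f}_j$ for $i\neq j$ and this sign control is lost, which is exactly why dense-norm methods forfeit nonlinear stability.) Completing the square in the remaining boundary terms rewrites them as $-\|\chi_{t_0}^T\mathbf{w}-(\mathcal{Y}_0-\mathcal{Z}_0)\|_P^2+\|\mathcal{Y}_0-\mathcal{Z}_0\|_P^2$, after which both non-positive contributions are discarded to give $\|\tilde{y}_{t_f}-\tilde{z}_{t_f}\|_P^2\le\|\mathcal{Y}_0-\mathcal{Z}_0\|_P^2$, which is BN-stability.

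Energy stability follows from the identical computation applied to a single solution $\mathbf{y}_\mathrm{d}$ (taking $\mathbf{w}=\mathbf{y}_\mathrm{d}$ with $\mathcal{Z}_0=0$ in the boundary terms and invoking \eqref{Eq:NStab} in place of \eqref{Eq:OSLipschitz}), yielding $\|\tilde{y}_{t_f}\|_P\le\|\mathcal{Y}_0\|_P$; monotonicity is then immediate from these contractive estimates. I expect the only genuine obstacle, relative to the classical proof, to be the penalty-mismatch term $\chi_{t_0}^T\mathbf{w}-(\mathcal{Y}_0-\mathcal{Z}_0)$, which vanishes identically in the classical case where $t_0$ is itself a solution point but is generally nonzero here; the completing-the-square manipulation shows it contributes a non-positive term and therefore only strengthens the bound, so the generalization goes through without difficulty.
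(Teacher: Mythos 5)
Your proof is correct and is essentially the argument the paper relies on: the theorem is stated without proof on the grounds that the classical energy-method proofs of \cite{Nordstrom:2014} ``extend immediately'' with $\tilde{y}_{t_f}=\chi_{t_f}^T\mathbf{y}_\mathrm{d}$, and your derivation is precisely that extension --- Condition \ref{Ass:E} producing the boundary terms $\|\chi_{t_f}^T\mathbf{w}\|_P^2-\|\chi_{t_0}^T\mathbf{w}\|_P^2$, the diagonal norm allowing point-by-point application of the one-sided Lipschitz condition, and completing the square on the SAT contribution. One small inaccuracy in your closing remark: the penalty-mismatch term $\chi_{t_0}^T\mathbf{w}-(\mathcal{Y}_0-\mathcal{Z}_0)$ does \emph{not} vanish in the classical case, since the initial condition is still imposed weakly there and $y_{\mathrm{d},1}\neq\mathcal{Y}_0$ in general (it is the projection of the \emph{continuous} solution, $\chi_{t_0}^T\mathbf{y}=\mathcal{Y}_0$, that is exact classically), so the completing-the-square step is already present in the classical proof and is not the genuinely new ingredient of the generalization.
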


\ \\[-4.5ex]

As with the classical SBP time-marching methods, nonlinear stability does not extend in general to dense-norm GSBP time-marching methods. However, in Section \ref{Sec:Runge-Kutta_Analogy} conditions are derived under which nonlinearly stable dense-norm GBP time-marching methods can be constructed.

%========================================================================================
%
\section{The Connection to Runge-Kutta Methods}\label{Sec:Runge-Kutta_Analogy}
%
%========================================================================================

In this section it is shown that SBP/GSBP time-marching methods can be rewritten as $n$-stage implicit Runge-Kutta methods. The Runge-Kutta connection is then applied to derive some additional accuracy and stability results for nonlinear IVPs. Specifically, diagonal-norm GSBP time-marching methods are shown by definition to be of order $p\ge \min(2q+1,\tau)$ for nonlinear problems. For dense-norm schemes this is reduced to $p\ge \min(q+1,\tau)$. However, these are only guaranteed minimums; the Runge-Kutta order conditions can also be used to supersede these results, as shown in Section \ref{Sec:Methods}. Finally, conditions are derived in this section under which BN-stable dense-norm GSBP time-marching methods can be constructed.

While SBP/GSBP time-marching method form a subset of implicit Runge-Kutta methods, the SBP/GSBP characterization remains important. Firstly, it greatly simplifies the construction of high order fully-implicit time-marching methods with a particular set of characteristics (See \cite{DBZ:2014} and Section \ref{Sec:Methods}). All that is required is a distribution of solution points or a quadrature rule. The resulting scheme is by definition L-stable and yields superconvergence of integral functionals. Furthermore, if the quadrature rule is positive, a diagonal-norm scheme exists and is BN-stable. The SBP/GSBP characterization can also facilitate the analysis of fully-discrete approximations of PDEs. Finally, the generalization to multi-dimensional GSBP operators proposed in \cite{Hicken:2015} may enable space-time discretizations of PDEs with the aforementioned properties. In this case, the explicit connection to Runge-Kutta methods is often lost.

To show the connection to Runge-Kutta methods, consider the $m^{th}$ application of a GSBP time-marching method to the nonlinear IVP \eqref{Eq:NonIVP}: $\mathcal{Y}^\prime = \mathcal{F}(\mathcal{Y},t)$:
\begin{equation}\label{Eq:GLM}
	D \mathbf{y}^{[m]}_\mathrm{d} = H^{-1}\Theta \mathbf{y}^{[m]}_\mathrm{d} = 
	\mathbf{f}^{[m]}_\mathrm{d} - 
	H^{-1}\chi_{t_0}\left(\chi_{t_0}^T \mathbf{y}^{[m]}_\mathrm{d} - 
	\chi_{t_f}^T \mathbf{y}^{[m-1]}_\mathrm{d}
	\right).
\end{equation}
Rearranging \eqref{Eq:GLM} for the solution values using Condition \ref{Ass:Rinv}: $(\Theta+\chi_{t_0}\chi_{t_0}^T)$ has strictly positive eigenvalues and is therefore invertible, and Lemma \ref{Lem:U}: $(\Theta +  \chi_{t_0}\chi_{t_0}^T)^{-1 }\chi_{t_0}=\mathds{1}$, gives
\begin{equation}\label{Eq:GSBPRunge-Kutta1}
	\mathbf{y}^{[m]}_\mathrm{d} = 
	\mathds{1}\tilde{y}^{[m-1]} + 
	\left(\Theta + 	\chi_{t_0}\chi_{t_0}^T\right)^{-1 } H \mathbf{f}^{[m]}_\mathrm{d},
\end{equation}
where $\tilde{y}^{[m-1]} =  \chi_{t_f}^T \mathbf{y}^{[m-1]}_\mathrm{d}$. Projecting \eqref{Eq:GSBPRunge-Kutta1} to the end of the time step and simplifying using Condition \ref{Ass:E}: $\chi_{t_f}^T\mathbf{t}^i = t_f^i$ for $i\in[0,r\ge q\ge 1]$, and Lemma \ref{Lem:B}: $\chi_{t_f}^T(\Theta +  \chi_{t_0}\chi_{t_0}^T)^{-1}=\mathds{1}^T$, yields:
\begin{equation}\label{Eq:GSBPRunge-Kutta2}
	\tilde{y}^{[m]} = \tilde{y}^{[m-1]} + \mathds{1}^T H \mathbf{f}^{[m]}_\mathrm{d}.
\end{equation}
These equations describe a set of intermediate values $\mathbf{y}^{[m]}_\mathrm{d}$, constructed from a single initial value $\tilde{y}^{[m-1]}$. In turn, these values are used to generate a solution one step forward in time $\tilde{y}^{[m]}$. This is equivalent to a Runge-Kutta scheme written in the form
\begin{equation}\label{Eq:Runge-Kutta1}
	\tilde{y}^{[m]} = \tilde{y}^{[m-1]} + h\sum_{j=1}^n b_{j}\mathcal{F}(y_{\mathrm{d},j},t^{[m-1]}+c_jh),
\end{equation}
with internal stage approximations:
\begin{equation}\label{Eq:Runge-Kutta2}
	y_k = \tilde{y}^{[m-1]} + 
		h\sum_{j=1}^n A_{kj}\mathcal{F}(y_{\mathrm{d},j},t^{[m-1]}+c_jh)  \quad \mbox{for } k=1,\ldots ,n,
\end{equation}
where $A_{kj}$ and $b_{j}$ are the coefficients of the method with abscissa $\mathbf{c}$, and $h=t^{[m]}_f-t^{[m]}_0$ is the step size. Comparing the two sets of equations, the Runge-Kutta coefficient matrices associated with a SBP/GSBP time-marching method are:
\begin{equation}\label{Eq:coef1}
	\begin{array}{c}
		A = \frac{1}{h} \left(\Theta +  \chi_{t_0}\chi_{t_0}^T\right)^{-1 }H, \\\\
		b^T = \chi_{t_f}^T A = \frac{1}{h} \mathds{1}^T H.
	\end{array}
\end{equation}
The factor of $1/h$ in \eqref{Eq:coef1} stems from the fact that for SBP/GSBP time-marching, the step size is implicitly defined in the norm. Similarly, the abscissa of the Runge-Kutta characterization must be rescaled and translated from $[t^{[m]}_0,t_f^{[m]}]$ to $[0,1]$:
\begin{equation}\label{Eq:GLM3}
	\mathbf{c} = \frac{\mathbf{t} - \mathds{1}t^{[m]}_0}{h}.
\end{equation}
The application of GSBP and projection operators applied to \eqref{Eq:GLM3} is discussed in Appendix \ref{App:Abscissa}. These relationships greatly simplify the analysis in the subsequent sections.

%------------------------------------------------------------------------------------------------------------------
%
\subsection{Accuracy for Nonlinear Initial Value Problems}
%
%------------------------------------------------------------------------------------------------------------------

In Section \ref{Sec:Acc} we have shown that the solution at the end of each time step approximated from the numerical solution from a GSBP time-marching method is superconvergent. However, the theory of superconvergence is limited to IVPs which are linear with respect to the solution. In this section, we use the connection to Runge-Kutta methods to generate general accuracy results for fully nonlinear problems. This is accomplished by comparing the conditions imposed on GSBP time-marching methods and the simplifying order conditions derived for Runge-Kutta methods. 

A brief discussion of the full nonlinear order conditions for Runge-Kutta methods is also presented, which can be used to supersede the aforementioned minimum guaranteed order results.

%------------------------------------------------------------------------------------------------------------------
%
\subsubsection{Simplifying assumptions}\label{Sec:Simple}
%
%------------------------------------------------------------------------------------------------------------------

This section presents a brief introduction to the simplifying order conditions derived for Runge-Kutta methods. The Runge-Kutta characterization of GSBP time-marching methods is then substituted into these conditions to derive some general accuracy results.

The full Runge-Kutta order conditions for general nonlinear IVPs become increasingly difficult to solve as the order increases. To ease the construction of higher-order Runge-Kutta schemes, a simpler set of sufficient conditions were derived in \cite{Butcher:1964}. This simplified set of equations are referred to as simplifying order conditions and are summarized with the following theorem presented without proof (See Theorem 7 in \cite{Butcher:1964}):\\

\begin{theorem}\label{Thm:Simp}
	If the coefficients $A$, $b^T$, and $\mathbf{c}$ of a Runge-Kutta method satisfy the following conditions:
	\begin{equation}\label{Eq:Simp1}
		B(p):\quad	b^T\mathbf{c}^{j-1} = \frac{1}{j}, \quad j \in [1,p]
	\end{equation}
	\begin{equation}\label{Eq:Simp2}
		C(\hat{q}):\quad	A\mathbf{c}^{j-1} = \frac{\mathbf{c}^j}{j}, \quad j \in [1,\hat{q}]
	\end{equation}
	\begin{equation}\label{Eq:Simp3}
		D(\xi):\quad	A^T B_d\mathbf{c}^{j-1} = \frac{1}{j}B_d(\mathds{1}-\mathbf{c}^j), \quad j \in [1,\xi]
	\end{equation}
	with $p\le 2\hat{q}+2$ and $p\le \hat{q}+\xi+1$, where $B_d$ is a diagonal matrix formed by the entries of $b$, then the Runge-Kutta method will be of order $p$.
\end{theorem}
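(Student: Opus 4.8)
The plan is to prove the theorem inside Butcher's theory of rooted trees, which characterizes the order of a Runge-Kutta method combinatorially. Recall that a method with coefficients $(A, b^T, \mathbf{c})$ is of order $p$ if and only if, for every rooted tree $\tau$ with at most $p$ vertices, the elementary weight equals the reciprocal of the tree's density, $b^T \Phi(\tau) = 1/\gamma(\tau)$, where $\Phi(\tau)$ is built recursively from $A$, $b$, and $\mathbf{c}$. The strategy is therefore to show that the three families $B(p)$, $C(\hat{q})$, and $D(\xi)$, together with the stated inequalities, force all of these tree conditions to hold. I would follow the standard tree-reduction route (as in \cite{HairerII}), treating each simplifying condition as a rule that collapses a structured portion of a tree into a lower-order tree whose condition is already known by induction.

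First I would record the elementary-weight recursion: if $\tau = [\tau_1,\ldots,\tau_m]$ has a root carrying subtrees $\tau_1,\ldots,\tau_m$, then $\Phi_i(\tau) = \prod_{k=1}^m (A\,\Phi(\tau_k))_i$, with $\Phi(\bullet) = \mathds{1}$ for the single vertex. From this, condition $B(p)$ is seen to be exactly the order condition for the \emph{bushy} trees---a root carrying $j-1$ leaves---up to $p$ vertices, since such a tree has $\Phi_i = c_i^{\,j-1}$ and density $\gamma = j$, so that $B(p)$ gives $b^T\mathbf{c}^{j-1} = 1/j$. Next I would translate $C(\hat{q})$ and $D(\xi)$ into surgery rules. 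Condition $C(\hat{q})$, namely $A\mathbf{c}^{j-1} = \mathbf{c}^{j}/j$ for $j \le \hat{q}$, lets me collapse a \emph{tall} chain of length $\le \hat{q}$ grafted at a vertex into a power of $\mathbf{c}$ at that vertex, pruning thin upper branches; dually, $D(\xi)$, namely $A^T B_d \mathbf{c}^{j-1} = \tfrac{1}{j} B_d(\mathds{1} - \mathbf{c}^{j})$ for $j \le \xi$, operates at the root, absorbing a branch adjacent to the root into the weight vector $b$ and returning a tree with fewer vertices.

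With these reductions in hand, I would establish the tree conditions by induction on the number of vertices of $\tau$. The inductive step reduces to showing that every rooted tree with at most $p$ vertices admits one of the reductions: either it carries a tall enough branch that $C(\hat{q})$ collapses, or else it is bushy enough near the root that $D(\xi)$---in combination with $C(\hat{q})$---applies, the bushy trees themselves being the base case delivered directly by $B(p)$. Each reduction yields a tree with strictly fewer vertices and order at most $p$, whose condition holds by the inductive hypothesis; tracking the density $\gamma$ through the reduction then recovers $b^T\Phi(\tau) = 1/\gamma(\tau)$ for the original tree. The two hypotheses $p \le 2\hat{q} + 2$ and $p \le \hat{q} + \xi + 1$ enter at exactly this point: they are the arithmetic thresholds ensuring that the height of any collapsible branch, weighed against the remaining order budget, is always reachable by either the $C$- or the $D$-reduction.

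The main obstacle I anticipate is the combinatorial bookkeeping in this last step, namely verifying that no rooted tree with at most $p$ vertices escapes all three reductions. This requires a careful case analysis on the structure of $\tau$ relative to its root---bounding the height of the subtree that $C(\hat{q})$ can reach and the number of root-adjacent branches that $D(\xi)$ can remove---and showing that the inequalities $p \le 2\hat{q}+2$ and $p \le \hat{q}+\xi+1$ are sharp enough to close every case. Establishing the two surgery lemmas rigorously (that $C$ and $D$ genuinely preserve the reduced order condition, with the correct density factor) is the other technical core; once both are in place, the induction together with the counting argument completes the proof, which is why the paper defers the details to \cite{Butcher:1964}.
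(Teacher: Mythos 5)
The paper does not prove this theorem at all: it is stated ``without proof'' with a pointer to Theorem~7 of Butcher (1964), so there is no in-paper argument to compare against. Your outline is the standard rooted-tree reduction by which that cited result is actually established (elementary weights $b^T\Phi(\tau)=1/\gamma(\tau)$, $B(p)$ handling the bushy trees, $C(\hat q)$ and $D(\xi)$ acting as reductions, induction on the number of vertices with the inequalities $p\le 2\hat q+2$ and $p\le \hat q+\xi+1$ guaranteeing exhaustiveness), so the approach is the right one and matches the source the authors defer to.

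That said, as submitted this is a roadmap rather than a proof: the two ``surgery lemmas'' (that applying $C(\hat q)$ or $D(\xi)$ to a tree produces the order condition of a strictly smaller tree with the correct density factor) and the exhaustive case analysis showing that every rooted tree with at most $p$ vertices is reachable by some chain of reductions are precisely the content of the theorem, and you explicitly leave both open. One small conceptual correction for when you fill this in: $C(\hat q)$, i.e.\ $A\mathbf{c}^{j-1}=\mathbf{c}^j/j$, does not collapse a \emph{tall chain}; it collapses a \emph{bushy crown}, namely an internal vertex all of whose children are leaves (the $j-1$ leaves give the factor $\mathbf{c}^{j-1}$, which $A$ turns into $\mathbf{c}^j/j$). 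Tall structures are handled only by iterating this crown reduction from the top down, and it is exactly this iteration, combined with the root reduction from $D(\xi)$, that the two inequalities must control. Until that bookkeeping is carried out the argument is not complete, which is presumably why the paper simply cites Butcher rather than reproducing it.
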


\ \\[-4.5ex]

The first simplifying condition $B(p)$ \eqref{Eq:Simp1} is the requirement that $b^T$ be a quadrature rule of order $p$. The relationship between SBP/GSBP operators and quadrature rules was examined in \cite{Hicken:2011c,DBZ:2014}. The following lemma shows that GSBP time-marching methods satisfy the first simplifying order condition $B(p)$ for $p=\tau$, where $\tau$ is the order of the associated quadrature rule:\\

\begin{lemma}\label{Lem:Bp}
	SBP and GSBP time-marching methods with a norm matrix associated with a quadrature rule $\mathbf{w} = H\mathds{1}$ of order $\tau$, satisfy condition $B(p)$ for $p=\tau$.
\end{lemma}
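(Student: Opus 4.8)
The plan is to substitute the Runge-Kutta characterization of a GSBP time-marching method, given in \eqref{Eq:coef1} and \eqref{Eq:GLM3}, directly into the first simplifying condition $B(p)$ of Theorem \ref{Thm:Simp}, and then to verify it monomial-by-monomial using the exactness of the associated quadrature $\mathbf{w} = H\mathds{1}$.

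First I would observe that, since $H$ is symmetric, $b^T = \tfrac{1}{h}\mathds{1}^T H = \tfrac{1}{h}(H\mathds{1})^T = \tfrac{1}{h}\mathbf{w}^T$, so the Runge-Kutta weight vector is, up to the scaling $1/h$, exactly the quadrature weight vector $\mathbf{w}$. Writing the abscissa componentwise from \eqref{Eq:GLM3} as $c_i = (t_i - t_0)/h$, the quantity to be evaluated for each $j \in [1,\tau]$ is $b^T \mathbf{c}^{j-1} = \tfrac{1}{h^j}\,\mathbf{w}^T(\mathbf{t} - \mathds{1}t_0)^{j-1}$, where the power is taken componentwise.

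Next I would expand $(\mathbf{t} - \mathds{1}t_0)^{j-1}$ with the binomial theorem, so that $b^T\mathbf{c}^{j-1}$ becomes a linear combination of the moments $\mathbf{w}^T\mathbf{t}^k$ for $k \in [0, j-1]$. Because $\mathbf{w}$ is a quadrature rule of order $\tau$, each such moment with $k \le \tau - 1$ equals the exact integral $\int_{t_0}^{t_f} t^k\,dt$, and this is precisely the range of $k$ that arises for $j$ up to $\tau$. Replacing each moment by its integral and resumming the binomial series under the integral sign collapses the expression to $\tfrac{1}{h^j}\int_{t_0}^{t_f}(t - t_0)^{j-1}\,dt$; the change of variables $u = t - t_0$ over $[0,h]$ then yields $\tfrac{1}{h^j}\cdot\tfrac{h^j}{j} = \tfrac{1}{j}$, which is exactly $B(\tau)$.

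The computation is essentially a direct verification, so there is no deep obstacle; the main point requiring care is the bookkeeping of the quadrature order. One must confirm that evaluating $B(p)$ up to $j = \tau$ requires the quadrature to be exact only for monomials of degree at most $\tau - 1$, matching the stated order of $\mathbf{w}$, and that the $1/h$ factors in $b^T$ and in each power of $\mathbf{c}$ combine to cancel the $h^j$ produced by the integral. I would also note that the argument never uses the structure of $H$ beyond symmetry and the exactness of $\mathbf{w} = H\mathds{1}$, so it applies uniformly to both diagonal-norm and dense-norm SBP and GSBP time-marching methods, as claimed.
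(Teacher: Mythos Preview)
Your proposal is correct and follows essentially the same approach as the paper: write $b^T=\tfrac{1}{h}\mathbf{w}^T$, expand $\mathbf{c}^{j-1}$ via the binomial theorem, and invoke the exactness of the quadrature $\mathbf{w}$ on monomials of degree at most $\tau-1$. Your final simplification---resumming the binomial expansion under the integral sign to obtain $\tfrac{1}{h^j}\int_{t_0}^{t_f}(t-t_0)^{j-1}\,dt=\tfrac{1}{j}$ directly---is more streamlined than the paper's, which instead evaluates each moment explicitly and then collapses the resulting finite sum using the identities $\binom{j-1}{i}=\binom{j}{i}\tfrac{j-i}{j}$ and $\sum_{i=0}^{j}\binom{j}{i}(-1)^i=0$; the content is the same, only the bookkeeping differs.
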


\

\begin{proof}
	 Beginning with first simplifying condition $B(p)$ \eqref{Eq:Simp1} and substituting \eqref{Eq:ab4}: $\mathbf{c}^p = \frac{1}{h^p} \sum_{i=0}^p \binom{p}{i}(\mathbf{t}^{[m]})^{p-i}(-t_0^{[m]})^i$, gives:	
	\begin{equation}
		b^T\mathbf{c}^{j-1} = \frac{1}{h}\mathbf{w}^T\mathbf{c}^{j-1} = \frac{1}{h^{j}} \sum_{i=0}^{j-1} \binom{{j-1}}{i}\mathbf{w}^T(\mathbf{t}^{[m]})^{{j-1}-i}(-t_0^{[m]})^i, \quad j \in [1,p].
	\end{equation}
	Integrating the distribution of solution points $\mathbf{t}^{[m]}$ using the quadrature $\mathbf{w}$ yields:
	\begin{equation}
		b^T\mathbf{c}^{j-1} = \frac{1}{h^{j}} \sum_{i=0}^{j-1} \binom{{j-1}}{i}\frac{1}{{j-i}}((t_f^{[m]})^{j-i}-(t_0^{[m]})^{j-i})(-t_0^{[m]})^i, \quad j \in [1,p\le\tau].
	\end{equation}
	Using the identity $\binom{{j-1}}{i} = \binom{{j}}{i}\frac{j-i}{j}$ and simplifying gives:
	\begin{equation}
		b^T\mathbf{c}^{j-1} = \frac{1}{j h^{j}} \sum_{i=0}^{j-1} \binom{{j}}{i}
		\left(		(t_f^{[m]})^{j-i}(-t_0^{[m]})^i -(t_0^{[m]})^{j}(-1)^i		\right), \quad j \in [1,p\le\tau].
	\end{equation}
	Next, we raise the upper bound on the summation by adding and subtracting the argument of the summation with index $i=j$:
	\begin{align} \label{Eq:Bp1}
		b^T\mathbf{c}^{j-1} &= 	\frac{1}{j h^{j}} \sum_{i=0}^{j} \binom{{j}}{i}
		\left(		(t_f^{[m]})^{j-i}(-t_0^{[m]})^i -(t_0^{[m]})^{j}(-1)^i		\right)\\[1ex]
		&\qquad\qquad -\left(		(t_f^{[m]})^{j-i}(-t_0^{[m]})^i -(t_0^{[m]})^{j}(-1)^i	 		\right)\bigg|_{i=j}, \quad j \in [1,p\le\tau].	\nonumber
	\end{align}
	The summation can be simplified using the identity $\sum_{i=0}^{j}\binom{{j}}{i}(-1)^i=0$:
	\begin{equation}
		\sum_{i=0}^{j} \binom{{j}}{i}(t_f^{[m]})^{j-i}(-t_0^{[m]})^i - (t_0^{[m]})^{j} \sum_{i=0}^{j} \binom{{j}}{i}(-1)^i	
		= \sum_{i=0}^{j} \binom{{j}}{i}(t_f^{[m]})^{j-i}(-t_0^{[m]})^i,
	\end{equation}
	which is equivalent to $h^j = (t_f^{[m]}-t_0^{[m]})^j$. Therefore, the first term in \eqref{Eq:Bp1} simplifies to $\frac{h^j}{jh^j}=1/j$. The second term in \eqref{Eq:Bp1} simplifies to zero:
	\begin{equation}
		- \left(		(t_f^{[m]})^{0}(-t_0^{[m]})^j -(t_0^{[m]})^{j}(-1)^j	 		\right) = 
		- \left(		(t_0^{[m]})^{j}(-1)^j - (t_0^{[m]})^{j}(-1)^j	 		\right) = 0.
	\end{equation}
	Thus, $b^T\mathbf{c}^{j-1} = 1/j$ for $j \in [1,p\le\tau],$ completing the proof.
\end{proof}

\ \\[-4.5ex]

The simplifying conditions $C(\hat{q})$ \eqref{Eq:Simp2} are known as the stage-order conditions. It describes the order to which the intermediate stage values \eqref{Eq:Runge-Kutta2} approximate $\mathcal{Y}(t_0 + c_i)$. This property influences the convergence of problems with stiff source terms \cite{Prothero,Nordstrom:2014} as well as singular perturbation problems (\textit{e.g.} \cite{Hairer:1988}). For SBP and GSBP time-marching methods these are the solution values $\mathbf{y}_\mathrm{d}$. The following lemma states that the second simplifying condition $C(\hat{q})$ is satisfied with $\hat{q}\ge q$, where $q$ is the order of the GSBP operator:\\

\begin{lemma}\label{Lem:Cq}
	SBP and GSBP time-marching methods of order $q$, satisfy the stage order condition $C(\hat{q})$ for $\hat{q} \ge q$.
\end{lemma}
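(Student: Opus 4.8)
The plan is to reduce the stage-order condition $C(\hat q)$ in \eqref{Eq:Simp2} to the two defining accuracy properties of the GSBP framework --- the degree condition $D\mathbf{t}^j = j\mathbf{t}^{j-1}$ from Definition \ref{Def:SBPGen} and the projection accuracy $\chi_{t_0}^T\mathbf{t}^j = t_0^j$ from Condition \ref{Ass:E} --- after first transferring them from the physical abscissa $\mathbf{t}$ to the rescaled abscissa $\mathbf{c} = (\mathbf{t} - \mathds{1}t_0^{[m]})/h$ of \eqref{Eq:GLM3}. Since the claim is $\hat q \ge q$, it suffices to establish $A\mathbf{c}^{j-1} = \mathbf{c}^j/j$ for every $j \in [1,q]$.

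First I would prove the rescaled differentiation identity $hD\mathbf{c}^j = j\,\mathbf{c}^{j-1}$ for $j \in [0,q]$. Expanding $\mathbf{c}^j$ with the binomial formula \eqref{Eq:ab4}, applying $D$ term by term, and invoking $D\mathbf{t}^{j-i} = (j-i)\mathbf{t}^{j-i-1}$ (legitimate because each exponent satisfies $j-i \le q$), the claim follows from the elementary identity $\binom{j}{i}(j-i) = j\binom{j-1}{i}$, which re-indexes the sum back into $j\,\mathbf{c}^{j-1}$. This is simply the discrete chain rule, reflecting that an affine rescaling of the independent variable preserves the polynomial-degree exactness of the operator. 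In the same spirit I would show $\chi_{t_0}^T\mathbf{c}^j = 0$ for $1 \le j \le q$: using $\chi_{t_0}^T\mathbf{t}^{j-i} = t_0^{j-i}$ (valid since $j \le q \le r$), the binomial sum collapses to $(t_0 - t_0)^j/h^j = 0$.

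With these two facts in hand, the main step rewrites $C(\hat q)$ algebraically. Using the invertibility of $\Theta + \chi_{t_0}\chi_{t_0}^T$ guaranteed by Condition \ref{Ass:Rinv} (with $\sigma = -1$) together with the coefficient identity $A = \tfrac{1}{h}(\Theta + \chi_{t_0}\chi_{t_0}^T)^{-1}H$ from \eqref{Eq:coef1}, the condition $A\mathbf{c}^{j-1} = \mathbf{c}^j/j$ becomes equivalent to $H\mathbf{c}^{j-1} = \tfrac{h}{j}(\Theta + \chi_{t_0}\chi_{t_0}^T)\mathbf{c}^j$. Splitting the right-hand side and substituting $\Theta = HD$ together with the rescaled differentiation identity gives $\tfrac{h}{j}\Theta\mathbf{c}^j = \tfrac{h}{j}\cdot H D\mathbf{c}^j = H\mathbf{c}^{j-1}$, while $\chi_{t_0}^T\mathbf{c}^j = 0$ annihilates the remaining term $\tfrac{h}{j}\chi_{t_0}\chi_{t_0}^T\mathbf{c}^j$. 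Hence the required equality holds for every $j \in [1,q]$, so $C(\hat q)$ is satisfied with $\hat q \ge q$.

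The only place demanding genuine care --- the main obstacle --- is the bookkeeping in transferring the GSBP identities from $\mathbf{t}$ to $\mathbf{c}$; in particular one must verify that every exponent $j-i$ produced by the binomial expansion stays within the ranges $[0,q]$ and $[0,r]$ where the degree and projection identities are valid, so that the term-by-term application of $D$ and of $\chi_{t_0}^T$ is justified. Because $i \ge 0$ forces $j - i \le j \le q \le r$, this holds automatically, and no difficulty arises beyond the index-shifting already encountered in the proof of Lemma \ref{Lem:Bp}.
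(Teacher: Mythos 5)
Your proposal is correct and follows essentially the same route as the paper: it inverts the coefficient matrix $A = \tfrac{1}{h}(\Theta+\chi_{t_0}\chi_{t_0}^T)^{-1}H$ to reduce $C(\hat q)$ to the identity $H\mathbf{c}^{j-1} = \tfrac{h}{j}(\Theta+\chi_{t_0}\chi_{t_0}^T)\mathbf{c}^j$, and then closes it with the rescaled differentiation identity $D\mathbf{c}^j = \tfrac{j}{h}\mathbf{c}^{j-1}$ and the annihilation property $\chi_{t_0}^T\mathbf{c}^j = 0$, which are exactly the paper's Lemmas \ref{Lem:AbDer} and \ref{Lem:Ab0} (you simply re-derive them inline rather than citing the appendix).
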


\

\begin{proof}
	Consider the stage order conditions \eqref{Eq:Simp2}:
	\begin{equation}
		A\mathbf{c}^{j-1} =\frac{1}{h}(\Theta + \chi_{t_0}\chi_{t_0}^T)^{-1 } H\mathbf{c}^{j-1} = \frac{1}{j}\mathbf{c}^{j},\text{ for } j\in[1,\hat{q}].
	\end{equation}
	Multiplying through by $h H^{-1 } (\Theta + \chi_{t_0}\chi_{t_0}^T)$ and simplifying yields
	\begin{equation}	
			\mathbf{c}^{j-1} = \frac{1}{j}hD\mathbf{c}^{j} + \frac{1}{j}h H^{-1}\chi_{t_0}\chi_{t_0}^T\mathbf{c}^{j},\text{ for } j\in[1,\hat{q}].
	\end{equation}
	If $\hat{q}\le q$, then by Lemma \ref{Lem:AbDer}: $D\mathbf{c}^{j} = \frac{1}{h}j\mathbf{c}^{j-1}$ for $j\in[0,q]$, the first term on the right-hand side becomes $\mathbf{c}^{j-1}$, and by Lemma \ref{Lem:Ab0}: $\chi_{t_0}^T\mathbf{c}^{j} = 0$ for $j\in[0,r\ge q]$, the second term becomes $0$. Thus, an identity is recovered, proving the theorem.
\end{proof}

\ \\[-4.5ex]

The final simplifying condition $D(\xi)$ \eqref{Eq:Simp3} defines a relationship between certain order conditions based on their form. The following lemma states that diagonal-norm GSBP time-marching methods satisfy this condition with $\xi\ge q$:\\

\begin{lemma}\label{Lem:Dxi}
	Diagonal-norm SBP and GSBP time-marching methods of order $q$, satisfy condition $D(\xi)$ for $\xi\ge q$.
\end{lemma}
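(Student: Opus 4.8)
The plan is to verify the condition $D(\xi)$ from \eqref{Eq:Simp3} directly by substituting the Runge-Kutta coefficients \eqref{Eq:coef1} associated with a diagonal-norm GSBP time-marching method, and then reducing the resulting identity to relationships that are already guaranteed by the GSBP structure. Writing out $D(\xi)$ with $A = \frac{1}{h}(\Theta + \chi_{t_0}\chi_{t_0}^T)^{-1}H$ and $b = \frac{1}{h}H\mathds{1}$ (so that the diagonal matrix $B_d$ has entries $\frac{1}{h}(H\mathds{1})_i$), the condition $A^T B_d \mathbf{c}^{j-1} = \frac{1}{j}B_d(\mathds{1} - \mathbf{c}^j)$ becomes, after clearing the overall factors of $1/h$, an identity involving $H(\Theta + \chi_{t_0}\chi_{t_0}^T)^{-T}$ together with $B_d$. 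The crucial simplification comes from the diagonal-norm hypothesis: when $H$ is diagonal, $B_d = \frac{1}{h}\operatorname{diag}(H\mathds{1}) = \frac{1}{h}H$, so $B_d$ and $H$ commute and can be manipulated as the same matrix up to the factor $h$. This is precisely where the dense-norm case would fail, and it is the reason the lemma is restricted to diagonal norms.

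First I would transpose and rearrange to isolate the factor $(\Theta + \chi_{t_0}\chi_{t_0}^T)^{-1}$, and use the SBP decomposition $\Theta + \Theta^T = \tilde{E} = \chi_{t_f}\chi_{t_f}^T - \chi_{t_0}\chi_{t_0}^T$ from Condition \ref{Ass:E} to rewrite $\Theta^T = \tilde{E} - \Theta = \chi_{t_f}\chi_{t_f}^T - \chi_{t_0}\chi_{t_0}^T - \Theta$. The aim is to convert the transposed operator $(\Theta + \chi_{t_0}\chi_{t_0}^T)^T = \Theta^T + \chi_{t_0}\chi_{t_0}^T = \chi_{t_f}\chi_{t_f}^T - \Theta$ into an expression that, after multiplying through, produces a discrete integration-by-parts relation. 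Concretely, I expect the identity to collapse onto a statement of the form $H D \mathbf{c}^j$-type terms balanced against boundary contributions $\chi_{t_f}\chi_{t_f}^T \mathbf{c}^j$ and $\chi_{t_0}\chi_{t_0}^T \mathbf{c}^j$, at which point the accuracy relations become available.

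The three ingredients that close the argument are: the derivative accuracy $D\mathbf{c}^j = \frac{1}{h}j\mathbf{c}^{j-1}$ for $j \in [0,q]$ (Lemma \ref{Lem:AbDer}), the vanishing of the initial projection $\chi_{t_0}^T\mathbf{c}^j = 0$ for $j \in [0,r \ge q]$ (Lemma \ref{Lem:Ab0}), and the corresponding fact $\chi_{t_f}^T\mathbf{c}^j = 1$ obtained from Condition \ref{Ass:E} after the rescaling \eqref{Eq:GLM3}. Using these, each side of $D(\xi)$ should reduce to the same multiple of $H\mathbf{c}^{j-1}$ for all $j$ up to $q$, yielding $\xi \ge q$. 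The main obstacle I anticipate is the bookkeeping in transferring the transpose onto the norm: one must carefully track how $H$ moves past $(\Theta + \chi_{t_0}\chi_{t_0}^T)^{-T}$, and it is exactly the symmetry of $H$ together with its diagonal form that lets $B_d$ be absorbed cleanly. In the dense-norm case $B_d = \frac{1}{h}\operatorname{diag}(H\mathds{1})$ no longer equals $\frac{1}{h}H$, the commutation breaks, and the reduction to the SBP accuracy relations fails — which is the conceptual heart of why only diagonal-norm schemes satisfy $D(\xi)$.
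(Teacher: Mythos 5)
Your proposal is correct and follows essentially the same route as the paper's proof: substitute the Runge--Kutta characterization into $D(\xi)$, use the diagonal norm to identify $B_d$ with $\tfrac{1}{h}H$, apply Condition \ref{Ass:E} to rewrite $\Theta^T + \chi_{t_0}\chi_{t_0}^T = \chi_{t_f}\chi_{t_f}^T - \Theta$, and close with the accuracy relations for $D$ and the projection operators. The only cosmetic difference is that the paper's reduction needs only $D\mathbf{c}^{j}=\tfrac{j}{h}\mathbf{c}^{j-1}$ (Lemma \ref{Lem:AbDer}) and $\chi_{t_f}^T\mathbf{c}^{j}=1$ (Lemma \ref{Lem:Abf}); the $\chi_{t_0}$ contribution you mention is absorbed by the identity itself rather than requiring Lemma \ref{Lem:Ab0}.
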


\

\begin{proof} 
	Consider the third simplifying condition $D(\xi)$:
	\begin{equation}\label{Eq:Simp4}
		A^T B_d\mathbf{c}^{j-1} = \left(\frac{1}{h} \left(\Theta +  \chi_{t_0}\chi_{t_0}^T\right)^{-1 }H\right)^T B_d\mathbf{c}^{j-1} = \frac{1}{j}B_d(\mathds{1}-\mathbf{c}^j), \quad j \in [1,\xi].
	\end{equation}
	For diagonal-norm GSBP time-marching methods, $B_d$ is equivalent to the norm $H$. Making this substitution, multiplying through by $H^{-1 } \left(\Theta +  \chi_{t_0}\chi_{t_0}^T\right)^T H^{-1 } $, and simplifying yields:
	\begin{equation}
		\frac{1}{h} \mathbf{c}^{j-1} = \frac{1}{j} H^{-1} \left(\Theta^T +  \chi_{t_0}\chi_{t_0}^T\right)  (\mathds{1}-\mathbf{c}^j), \quad j \in [1,\xi].
	\end{equation}
	Using Condition \ref{Ass:E}: $\Theta+\Theta^T = \chi_{t_f}\chi_{t_f}^T - \chi_{t_0}\chi_{t_0}^T$, and expanding gives:
	\begin{equation}
		\frac{1}{h} \mathbf{c}^{j-1} = \frac{1}{j} \left(-H^{-1} \Theta(\mathds{1}-\mathbf{c}^j) +  H^{-1} \chi_{t_f}\chi_{t_f}^T(\mathds{1}-\mathbf{c}^j)\right),  \quad j \in [1,\xi].
	\end{equation}
	If $\xi\le q$, then by Lemma \ref{Lem:AbDer}: $D\mathbf{c}^{j} = \frac{1}{h}j\mathbf{c}^{j-1}$ for $j\in[0,q]$, the first term on the left-hand side reduces to $\frac{1}{h} \mathbf{c}^{j-1}$, and by Lemma \ref{Lem:Abf}: $\chi_{t_f}^T\mathbf{c}^j =1$ for $j\in[0,r\ge q]$, the second term simplifies to zero. Thus we recover an identity for $\xi\le q$, proving the lemma.
\end{proof}

\ \\
Combining this result with Lemmas \ref{Lem:Bp} and \ref{Lem:Cq}, the order of diagonal-norm SBP and GSBP time-marching methods follows immediately, summarized by the following theorem:\\

\begin{theorem}
	Diagonal-norm SBP and GSBP time-marching methods of order $p\ge \min(\tau,2q+1)$.
\end{theorem}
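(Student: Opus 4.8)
The plan is to obtain this statement as an immediate corollary of Theorem \ref{Thm:Simp}, feeding it the parameter values supplied by the three preceding lemmas; no fresh computation is required, only careful bookkeeping of the admissible indices. First I would collect the outputs of Lemmas \ref{Lem:Bp}, \ref{Lem:Cq}, and \ref{Lem:Dxi}: for a diagonal-norm SBP/GSBP time-marching method of order $q$ whose norm is associated with a quadrature of order $\tau$, these establish that condition $B(p)$ holds for $p=\tau$, condition $C(\hat q)$ holds for $\hat q=q$, and condition $D(\xi)$ holds for $\xi=q$.

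Next I would observe that all three simplifying conditions are stated over ranges $j\in[1,\cdot]$ and are therefore monotone in their arguments: $B(\tau)$ implies $B(p)$ for every $p\le\tau$, and likewise $C(q)$ and $D(q)$ continue to hold for any smaller index. This monotonicity is what frees us to select the largest order $p$ that simultaneously satisfies all three conditions together with the two coupling inequalities of Theorem \ref{Thm:Simp}.

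I would then substitute $\hat q=\xi=q$ into those coupling inequalities. The constraint $p\le 2\hat q+2$ becomes $p\le 2q+2$, while $p\le\hat q+\xi+1$ becomes $p\le 2q+1$; the latter is the binding one. Combined with the requirement $p\le\tau$ inherited from $B(p)$, the largest order that Theorem \ref{Thm:Simp} permits us to assert is $p=\min(\tau,2q+1)$, which is precisely the claim.

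The only point requiring care, and the closest thing to an obstacle here, is confirming that the index monotonicity is genuine, so that the value of $p$ extracted from the three lemmas may be taken as large as the coupling inequalities allow rather than being pinned to a single index. Since each lemma is proved over an interval of indices rather than a single value, this holds automatically, and the theorem follows without further work.
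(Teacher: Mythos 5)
Your proposal is correct and follows exactly the paper's own route: the paper's proof is the single sentence that the result follows from Theorem \ref{Thm:Simp} together with Lemmas \ref{Lem:Bp}, \ref{Lem:Cq}, and \ref{Lem:Dxi}, and your substitution of $\hat q=\xi=q$ into the coupling inequalities to get $p\le\min(\tau,2q+1)$ is precisely the bookkeeping the paper leaves implicit. Your added remark on the monotonicity of the conditions $B$, $C$, $D$ in their index is a sensible (if routine) point of care that the paper does not bother to state.
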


\

\begin{proof}
	The result follows from Theorem \ref{Thm:Simp}, as well as Lemmas \ref{Lem:Bp}, \ref{Lem:Cq}, and \ref{Lem:Dxi}.
\end{proof}

\ \\[-4.5ex]

Therefore, the superconvergence theory derived for diagonal-norm SBP and GSBP time-marching methods in Theorem \ref{Thm:SupSol}, also holds in the nonlinear case, $p=\min(\tau,2q+1)$.

In contrast, dense-norm SBP and GSBP time-marching methods do not in general satisfy the third simplifying order condition $D(\xi)$ \eqref{Eq:Simp3} with $\xi>0$. Therefore without the support of the third condition $D(\xi>0)$, the maximum guaranteed order of dense-norm SBP and GSBP time-marching methods is $\min(\tau,q+1)$, summarized in the following theorem:\\

\begin{theorem}
	Dense-norm SBP and GSBP time-marching methods are of order $p\ge \min(\tau,q+1)$.
\end{theorem}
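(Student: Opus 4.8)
The plan is to invoke the simplifying-condition machinery of Theorem \ref{Thm:Simp} in exactly the same way as for the diagonal-norm case, but now with only the trivial instance of the third condition $D(\xi)$ at our disposal. First I would collect the ingredients that survive for a general (dense) norm. By Lemma \ref{Lem:Bp} the method satisfies $B(\tau)$; since $B(p)$ asks only that $b^T\mathbf{c}^{j-1}=1/j$ for $j\in[1,p]$, the validity for $j\in[1,\tau]$ immediately gives $B(p)$ for every $p\le\tau$. By Lemma \ref{Lem:Cq} the stage-order condition $C(\hat{q})$ holds with $\hat{q}=q$. The one place where the dense-norm case departs from Lemma \ref{Lem:Dxi} is the third condition: nothing forces a dense norm to satisfy $D(\xi)$ for $\xi\ge 1$, so I would claim only $D(0)$, which is vacuous because its index range $j\in[1,0]$ is empty and hence holds for any Runge--Kutta method.

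Next I would apply Theorem \ref{Thm:Simp} with the admissible choice $\hat{q}=q$ and $\xi=0$. The two structural inequalities of that theorem become $p\le 2\hat{q}+2 = 2q+2$ and $p\le \hat{q}+\xi+1 = q+1$, while $B(p)$ contributes the constraint $p\le\tau$. Setting $p=\min(\tau,q+1)$, I would check that all three hypotheses $B(p)$, $C(q)$, $D(0)$ are in force and that both inequalities hold: $p\le\tau$ and $p\le q+1$ are immediate from the definition of the minimum, and $p\le q+1\le 2q+2$ holds automatically because $q\ge 1$ for every GSBP operator (Definition \ref{Def:SBPGen}). Theorem \ref{Thm:Simp} then certifies order $p$, which is precisely the asserted lower bound $p\ge\min(\tau,q+1)$.

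Since Lemmas \ref{Lem:Bp} and \ref{Lem:Cq} do the analytic work, the argument is bookkeeping rather than computation, and there is no serious obstacle. The only point requiring care is the correct identification of the binding constraint: with the third simplifying condition reduced to $\xi=0$, the inequality $p\le\hat{q}+\xi+1$ collapses to $p\le q+1$, so it is this bound together with $p\le\tau$ from the quadrature that governs the order, while the stage-order inequality $p\le 2q+2$ is slack. This is exactly where dense-norm methods forfeit the doubling $2q+1$ that diagonal norms enjoy in the preceding theorem: losing $D(\xi>0)$ removes the extra margin that $p\le\hat{q}+\xi+1$ would otherwise provide.
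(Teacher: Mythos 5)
Your proposal is correct and follows exactly the paper's route: the paper's proof simply cites Theorem \ref{Thm:Simp} together with Lemmas \ref{Lem:Bp} and \ref{Lem:Cq}, and your argument is the fully written-out version of that citation, with the right identification of $p\le\hat{q}+\xi+1=q+1$ and $p\le\tau$ as the binding constraints once $D(\xi)$ is only available vacuously. No gaps.
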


\

\begin{proof}
	The result follows from Theorem \ref{Thm:Simp}, as well as Lemmas \ref{Lem:Bp} and \ref{Lem:Cq}.
\end{proof}

\ \\[-1.5ex]
This is not in general as high as the superconvergence shown in Theorem \ref{Thm:SupSol}; however the third simplifying condition can be used to guide the construction of higher-order dense-norm GSBP time-marching methods.

In summary, we have applied the connection to Runge-Kutta methods to extend the superconvergence theory of the solution approximated at the end of each time step to fully nonlinear problems. For diagonal-norm GSBP time-marching methods, the rate of superconvergence is identical to what is shown in Section \ref{Sec:Super} for problems linear with respect to the solution. For dense-norm GSBP time-marching methods, the bounds are more restrictive than the previous result. Finally, the simplifying order conditions can be used to guide the construction of GSBP time-marching methods which supersede these minimum guaranteed results. An example is discussed in Section \ref{Sec:Methods}.

%------------------------------------------------------------------------------------------------------------------
%
\subsubsection{Full order conditions}\label{Sec:FOCon}
%
%------------------------------------------------------------------------------------------------------------------

The full order conditions for a Runge-Kutta scheme can be found in several references, for example \cite{Hairer:2000,Butcher:Book}. For reference, the conditions for orders one through four, assuming stage consistency: $A\mathds{1} = \mathbf{c}$, are:
\begin{equation}\label{Eq:FRKOC}
	\begin{array}{ccc}
		b^T \mathds{1} = 1 				& \hspace{2cm}	& b^T \mathbf{c}^3 = \frac{1}{4} \\[1ex]
		b^T \mathbf{c} = \frac{1}{2} 	& \hspace{2cm}	& b^T A \mathbf{c}^2 = \frac{1}{8} \\[1ex]
		b^T \mathbf{c}^2 = \frac{1}{3}	& \hspace{2cm}	& b^T C A \mathbf{c} = \frac{1}{12} \\[1ex]
		b^T A \mathbf{c} = \frac{1}{6} & \hspace{2cm}	& b^T A A \mathbf{c} = \frac{1}{24}				
	\end{array}
\end{equation}
where $C = \mathrm{diag}(\mathbf{c})$. Given the relationship between GSBP time-marching methods and the Runge-Kutta coefficient matrices derived above, one can simply insert these relationships into the algebraic order conditions and solve for the coefficients. These systems of equations are not necessarily easy to solve, but can be exploited in the construction of GSBP time-marching methods. For example, diagonally-implicit schemes are limited to stage-order $\hat{q}=1$ due to their form. Using the GSBP approach, the maximum guaranteed order of the resulting time-marching method is $p = 2q+1=3$. There is no theoretical way to determine if the scheme is of higher order, or to guide the construction of a higher-order scheme if it is not. The Runge-Kutta order conditions fill this role. In Section \ref{Sec:Methods} the full order conditions are applied to guide the construction of a diagonally implicit GSBP time-marching method with stage order $1$ and global order $4$.

\subsection{Nonlinear Stability of Dense-Norm GSBP Time-Marching Methods}

Dense-norm SBP and GSBP time-marching methods are not in general BN-stable or energy stable (See Section \ref{Sec:Stab}). In this section, however, we show that it is possible to construct BN-stable dense-norm GSBP time-marching methods. This is accomplished by considering the algebraic conditions for BN-stability derived in \cite{Butcher:1975,Crouzeix:1979}:\\

\begin{theorem}
A Runge-Kutta method is BN-stable if:\\[-1.5ex]
\begin{enumerate}[i)]
	\item $b_i \ge 0$ for $i=1,\ldots,s$;
	\item $A$ is invertible; and
	\item $\widehat{M} = B_dA^{-1} + (A^{-1})^TB_d - (A^{-1})^T b b^TA^{-1}$ is non-negative definite,
\end{enumerate}

\ \\[-1.5ex]
where $B_d$ is a diagonal matrix formed by the elements of $b$.
\end{theorem}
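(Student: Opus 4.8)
The plan is to run the classical algebraic-stability (energy) argument on the Runge--Kutta form \eqref{Eq:Runge-Kutta1}--\eqref{Eq:Runge-Kutta2}, advancing two solutions one step from initial data $\mathcal{Y}_0$ and $\mathcal{Z}_0$. Let $Y_i,Z_i$ be the corresponding stage values, $k_i=\mathcal{F}(Y_i,\cdot)$ and $\ell_i=\mathcal{F}(Z_i,\cdot)$ the stage derivatives, and $\tilde{y}_{t_f},\tilde{z}_{t_f}$ the updates. Writing $\Delta_0=\mathcal{Y}_0-\mathcal{Z}_0$, $\Delta Y_i=Y_i-Z_i$, $\Delta k_i=k_i-\ell_i$, and $\Delta_1=\tilde{y}_{t_f}-\tilde{z}_{t_f}$, the objective is to establish $\|\Delta_1\|_P\le\|\Delta_0\|_P$, since BN-stability over several steps then follows by induction.

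The central step is an algebraic identity. Expanding $\|\Delta_1\|_P^2=\|\Delta_0+h\sum_i b_i\Delta k_i\|_P^2$ and eliminating $\Delta_0$ through the differenced stage relation $\Delta_0=\Delta Y_i-h\sum_jA_{ij}\Delta k_j$, I would obtain
\[
\|\Delta_1\|_P^2=\|\Delta_0\|_P^2+2h\sum_{i}b_i\,\mathrm{Re}(\Delta Y_i,\Delta k_i)_P-h^2\sum_{i,j}M_{ij}(\Delta k_i,\Delta k_j)_P,
\]
where $M=B_dA+A^TB_d-bb^T$ and the last sum is real because $M$ is symmetric and $P$ is SPD. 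The middle sum is non-positive: hypothesis (i) gives $b_i\ge0$, and the one-sided Lipschitz condition \eqref{Eq:OSLipschitz} with $\nu\le0$ yields $\mathrm{Re}(\Delta Y_i,\Delta k_i)_P=\mathrm{Re}(\mathcal{F}(Y_i,\cdot)-\mathcal{F}(Z_i,\cdot),Y_i-Z_i)_P\le\nu\|\Delta Y_i\|_P^2\le0$ for every $i$. It therefore remains only to show that the final quadratic term is non-positive, i.e.\ that the $M$-weighted Gram sum is non-negative.

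Hypothesis (ii), invertibility of $A$ (which for GSBP time-marching methods is guaranteed by Condition \ref{Ass:Rinv}), is what connects $M$ to the stated matrix: a direct computation gives the congruence $M=A^T\widehat{M}A$, so $\widehat{M}$ non-negative definite (hypothesis (iii)) is equivalent to $M$ non-negative definite. The technical heart of the proof, and the step I expect to be the main obstacle, is passing from ``$M\succeq0$ as a real symmetric $s\times s$ matrix'' to ``$\sum_{i,j}M_{ij}(\Delta k_i,\Delta k_j)_P\ge0$'' for the vector-valued quantities $\Delta k_i\in\mathbb{C}^M$, while carefully tracking conjugates and real parts in the $P$-inner product. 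I would dispatch this with a spectral decomposition $M=\sum_\ell\mu_\ell w_\ell w_\ell^T$ having $\mu_\ell\ge0$ and real orthonormal eigenvectors $w_\ell$; substituting and regrouping the double sum rewrites it as
\[
\sum_{i,j}M_{ij}(\Delta k_i,\Delta k_j)_P=\sum_\ell\mu_\ell\,\big\|\textstyle\sum_i(w_\ell)_i\,\Delta k_i\big\|_P^2\ge0,
\]
each summand being non-negative because $P$ is SPD and each $\mu_\ell\ge0$.

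Combining the three estimates, both correction terms in the identity are non-positive, giving $\|\Delta_1\|_P^2\le\|\Delta_0\|_P^2$ and hence BN-stability. I would emphasize that the whole argument is independent of the GSBP structure except through invertibility of $A$; the $\widehat{M}$ form is preferred here only because $A^{-1}=hH^{-1}(\Theta+\chi_{t_0}\chi_{t_0}^T)$ is expressible directly in terms of the GSBP operators without an explicit matrix inverse, which is convenient when verifying condition (iii) for candidate dense-norm methods.
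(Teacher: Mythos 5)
Your argument is correct, but note that the paper does not actually prove this theorem: it is quoted verbatim from the cited literature (Butcher 1975, Crouzeix 1979) and used as a black box, so there is no in-paper proof to compare against. What you have reconstructed is the standard algebraic-stability energy argument from that literature, and all the delicate points check out: the identity $\|\Delta_1\|_P^2=\|\Delta_0\|_P^2+2h\sum_i b_i\,\mathrm{Re}(\Delta Y_i,\Delta k_i)_P-h^2\sum_{i,j}M_{ij}(\Delta k_i,\Delta k_j)_P$ with $M=B_dA+A^TB_d-bb^T$ is the right expansion (the cross terms combine correctly because $A$ and $b$ are real and the $P$-inner product is Hermitian), the sign of the middle term follows from hypothesis (i) together with the one-sided Lipschitz condition \eqref{Eq:OSLipschitz} with $\nu\le 0$, and your spectral-decomposition step correctly lifts ``$M\succeq 0$ as a real symmetric matrix'' to non-negativity of the vector-valued Gram sum. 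The one step that is genuinely specific to the paper's formulation --- hypothesis (iii) is stated in terms of $\widehat{M}$ rather than the more common $M$ --- you dispose of correctly via the congruence $M=A^T\widehat{M}A$, which is exactly why hypothesis (ii) (invertibility of $A$) appears in this version of the theorem and why this form is convenient for GSBP methods, where $A^{-1}=hH^{-1}(\Theta+\chi_{t_0}\chi_{t_0}^T)$ is available in closed form. The only cosmetic quibble is that the one-step contraction $\|\Delta_1\|_P\le\|\Delta_0\|_P$ is already the definition of BN-stability used in Section \ref{Sec:NLStab}, so the closing remark about induction over steps is not needed.
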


\ \\[-1.5ex]
By substituting the Runge-Kutta characterization of GSBP time-marching methods into this theorem, we obtain the conditions under which dense-norm GSBP time-marching methods can be constructed:\\

\begin{theorem}\label{Thm:Alg}
	A GSBP time-marching method which satisfies Condition \ref{Ass:Rinv} is BN-stable if it associated with a non-negative quadrature rule $\mathbf{w} = H\mathds{1}$, and the coefficient matrices satisfy:
	\begin{equation}\label{Eq:Stab1}
		W_dH^{-1}\left(\Theta+\chi_{t_0}\chi_{t_0}^T\right) + \left(\Theta+\chi_{t_0}\chi_{t_0}^T\right)^TH^{-1}W_d - \chi_{t_f}\chi_{t_f}^T,
	\end{equation}
	non-negative definite, where $W_d$ is a diagonal matrix formed by the elements of $\mathbf{w} = H\mathds{1}$.
\end{theorem}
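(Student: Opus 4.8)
The plan is to verify the three hypotheses of the preceding BN-stability theorem one by one, substituting the Runge-Kutta characterization \eqref{Eq:coef1}: $A = \frac{1}{h}(\Theta+\chi_{t_0}\chi_{t_0}^T)^{-1}H$ and $b^T = \frac{1}{h}\mathds{1}^T H$. The first hypothesis, $b_i \ge 0$, is immediate: since $b^T = \frac{1}{h}\mathds{1}^T H = \frac{1}{h}\mathbf{w}^T$, the entries of $b$ are just the quadrature weights scaled by $1/h$, so the assumption that $\mathbf{w} = H\mathds{1}$ is a non-negative quadrature rule gives $b_i \ge 0$ directly. The second hypothesis, invertibility of $A$, follows from Condition \ref{Ass:Rinv}: the matrix $(\Theta+\chi_{t_0}\chi_{t_0}^T)$ has strictly positive eigenvalues (take $\sigma=-1 < -\frac{1}{2}$) and is therefore invertible, and $H$ is SPD hence invertible, so $A$ is a product of invertible matrices.

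The substance of the proof lies in the third hypothesis, the non-negative definiteness of $\widehat{M} = B_dA^{-1} + (A^{-1})^TB_d - (A^{-1})^T b b^TA^{-1}$. First I would compute $A^{-1} = h H^{-1}(\Theta+\chi_{t_0}\chi_{t_0}^T)$ and identify $B_d$ with $W_d$, the diagonal matrix of quadrature weights (this is the analogue of the identification used in Lemma \ref{Lem:Dxi}, where for diagonal norms $B_d = H$, but here more generally $B_d = W_d = \mathrm{diag}(H\mathds{1})$). Substituting into the first two terms of $\widehat{M}$ gives
\begin{equation}
	B_dA^{-1} + (A^{-1})^TB_d = h\left[W_dH^{-1}(\Theta+\chi_{t_0}\chi_{t_0}^T) + (\Theta+\chi_{t_0}\chi_{t_0}^T)^TH^{-1}W_d\right].
\end{equation}
For the third term, I would use the two expressions for $b^T$ to write one factor as $b^T A^{-1}$ and the other as its transpose. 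The key simplification is $b^T A^{-1} = \frac{1}{h}\mathds{1}^T H \cdot h H^{-1}(\Theta+\chi_{t_0}\chi_{t_0}^T) = \mathds{1}^T(\Theta+\chi_{t_0}\chi_{t_0}^T)$, which I would then reduce using Lemma \ref{Lem:B}: $\chi_{t_f}^T(\Theta+\chi_{t_0}\chi_{t_0}^T)^{-1} = \mathds{1}^T$, equivalently $\mathds{1}^T(\Theta+\chi_{t_0}\chi_{t_0}^T) = \chi_{t_f}^T$. Hence $b^T A^{-1} = \chi_{t_f}^T$ and $(A^{-1})^T b = \chi_{t_f}$, so the third term becomes $(A^{-1})^T b b^T A^{-1} = \chi_{t_f}\chi_{t_f}^T$.

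Assembling these pieces, $\widehat{M}$ equals $h$ times the matrix in \eqref{Eq:Stab1}, namely $W_dH^{-1}(\Theta+\chi_{t_0}\chi_{t_0}^T) + (\Theta+\chi_{t_0}\chi_{t_0}^T)^TH^{-1}W_d - \chi_{t_f}\chi_{t_f}^T$. Since $h > 0$, non-negative definiteness of $\widehat{M}$ is equivalent to non-negative definiteness of \eqref{Eq:Stab1}, and the theorem follows directly from the cited algebraic BN-stability theorem. I expect the main obstacle to be the bookkeeping in the third term: one must be careful to apply Lemma \ref{Lem:B} to exactly one of the two factors (reducing $b^T A^{-1}$ to $\chi_{t_f}^T$) and its transpose to the other, rather than attempting to simplify the whole product at once, and to track the cancellation of the $h$ factors so that the $1/h$ scalings in $A$ and $b$ combine correctly to leave a clean factor of $h$ multiplying \eqref{Eq:Stab1}.
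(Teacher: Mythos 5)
Your overall strategy coincides with the paper's: verify the three algebraic BN-stability conditions by substituting the Runge--Kutta characterization \eqref{Eq:coef1}, with conditions (i) and (ii) handled exactly as in the paper. On the rank-one term in condition (iii) you take a mildly different but equivalent route, computing $b^TA^{-1} = \mathds{1}^T\left(\Theta+\chi_{t_0}\chi_{t_0}^T\right) = \chi_{t_f}^T$ directly from Lemma \ref{Lem:B}, whereas the paper first substitutes $b^T=\chi_{t_f}^TA$ and cancels $AA^{-1}$; both yield $(A^{-1})^Tbb^TA^{-1}=\chi_{t_f}\chi_{t_f}^T$.

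There is, however, a concrete error in the $h$ bookkeeping, in precisely the spot you flagged as the main obstacle. Since $b=\frac{1}{h}\mathbf{w}$, the diagonal matrix appearing in $\widehat{M}$ is $B_d=\frac{1}{h}W_d$, not $W_d$. Combined with $A^{-1}=hH^{-1}\left(\Theta+\chi_{t_0}\chi_{t_0}^T\right)$, the factors of $h$ cancel and $B_dA^{-1}=W_dH^{-1}\left(\Theta+\chi_{t_0}\chi_{t_0}^T\right)$ carries no leading $h$; hence $\widehat{M}$ equals the matrix \eqref{Eq:Stab1} exactly, not $h$ times it. As written, your assembly is internally inconsistent: your first two terms carry a factor of $h$ while your third term $\chi_{t_f}\chi_{t_f}^T$ (correctly) does not, so their sum is not $h$ times \eqref{Eq:Stab1}, and non-negative definiteness of \eqref{Eq:Stab1} would not transfer to that expression. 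Replacing $B_d=W_d$ by $B_d=\frac{1}{h}W_d$ removes the discrepancy and the argument then closes exactly as in the paper.
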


\

\begin{proof}
	Condition i): The $b$ coefficient matrix is equal to $\frac{1}{h}\mathbf{w}$. Therefore, if the weights of the associated quadrature rule are non-negative: $w_i \ge 0$, so will the entries of the $b$ coefficient matrix. This is immediately satisfied for diagonal-norm GSBP time-marching methods by Definition \ref{Def:SBPGen}: $H$ must be a SPD matrix.\\

	Condition ii): The $A$ coefficient matrix is equal to $\frac{1}{h}(\Theta+\chi_{t_0}\chi_{t_0}^T)^{-1}H$. The matrix $(\Theta+\chi_{t_0}\chi_{t_0}^T)$ is invertible by Condition \ref{Ass:Rinv}, and the norm $H$ is SPD by Definition \ref{Def:SBPGen}.  Therefore the $A$ coefficient matrix of a GSBP time-marching method is invertible by construction.\\
	
	Condition iii): To begin, substitute the relationship $b^T = \chi_{t_f}^TA$ into $\widehat{M}$:
	\begin{equation}\label{Eq:denseAlg}
		\widehat{M} = B_dA^{-1} + (A^{-1})^TB_d - (A^{-1})^T A^T\chi_{t_f} \chi_{t_f}^T A A^{-1}.
	\end{equation}
	Next, consider that the $b$ coefficient matrix is equal to $\frac{1}{h}\mathbf{w}$, and therefore $B_d$ is equal to $\frac{1}{h}\diag(\mathbf{w})$ or $\frac{1}{h}W_d$. Substituting this relationship into \eqref{Eq:denseAlg}, as well as the characterization of the $A$ coefficient matrix $\frac{1}{h}(\Theta+\chi_{t_0}\chi_{t_0}^T)^{-1}H$, and simplifying yields:
	\begin{equation}\label{Eq:denseAlg1}
		\widehat{M} = W_dH^{-1}\left(\Theta+\chi_{t_0}\chi_{t_0}^T\right) + \left(\Theta+\chi_{t_0}\chi_{t_0}^T\right)^TH^{-1}W_d - \chi_{t_f}\chi_{t_f}^T.
	\end{equation}
	Therefore, if \eqref{Eq:Stab1} is non-negative definite, so will $\widehat{M}$. For diagonal-norm GSBP time-marching methods the matrix $W_d$ is equal to the norm $H$. In this case \eqref{Eq:denseAlg1} simplifies to $\widehat{M} = \chi_{t_0}\chi_{t_0}^T$, which is always non-negative definite and satisfies the condition.
\end{proof}

\ \\[-4.5ex]

Thus, using the connection to Runge-Kutta methods, we have derived the conditions under which dense-norm GSBP time-marching methods can be constructed. In the proof, we have shown that all diagonal norm GSBP time-marching methods are BN-stable, which is consistent with Theorem \ref{Thm:EngyStb_diag}.

We can take these results one step further. It is well known that for non-confluent Runge-Kutta schemes (schemes with unique $c_i$) the criteria for algebraic-stability, BN-stability, B-stability, and AN-stability are all equivalent \cite{HairerII}. Since all dual-consistent SBP and GSBP time-marching methods are non-confluent by Definition \ref{Def:SBPGen}, those which satisfy Theorem \ref{Thm:Alg} will also have these additional stability properties.

%========================================================================================
%
\section{Examples of GSBP Time-Marching Methods}\label{Sec:Methods}
%
%========================================================================================

This section applies the theory developed in this article to construct some known and novel Runge-Kutta schemes which are based on GSBP operators.

%------------------------------------------------------------------------------------------------------------------
%
\subsection{Lobatto IIIC discontinuous-collocation Runge-Kutta methods}\label{Sec:LGL}
%
%------------------------------------------------------------------------------------------------------------------

A set of diagonal-norm GSBP operators which lead to a known class of Runge-Kutta methods, are those based on Gauss-Lobatto points. These operators are also spectral-element operators and where considered with the use of SATs in \cite{Gassner2013}. As an example, consider the four-node Gauss-Lobatto points in the domain $[-1,1]$:
\begin{equation}
	\mathbf{t} = \left[
	\begin{array}{cccc}
	-1 &  - \tfrac{1}{5}\sqrt{5} & \tfrac{1}{5}\sqrt{5} & 1
	\end{array}
	\right]^T.
\end{equation}
The corresponding GSBP norm, whose entries are the Gauss-Lobatto quadrature weights, and GSBP operator are:
\begin{equation}\label{Eq:LGL1}
	H = \left[
	\begin{array}{cccc}
	\tfrac{1}{6} \\[1ex]
	& \tfrac{5}{6} \\[1ex]
	& & \tfrac{5}{6} \\[1ex]
	& & & 	\tfrac{1}{6} 
	\end{array}
	\right], \quad
	D = \left[
	\begin{array}{cccc}
	-3 & -\tfrac{5\sqrt{5}}{\sqrt{5}-5} & -\tfrac{5\sqrt{5}}{\sqrt{5}+5} & \frac{1}{2}\\[1ex]
	\tfrac{\sqrt{5}}{\sqrt{5}-5} & 0 & \frac{\sqrt{5}}{2} & -\tfrac{5\sqrt{5}}{\sqrt{5}+5}\\[1ex]
	\tfrac{\sqrt{5}}{\sqrt{5}+5} & \frac{-\sqrt{5}}{2} & 0 & -\tfrac{5\sqrt{5}}{\sqrt{5}-5}\\[1ex]
	-\frac{1}{2} & \tfrac{5\sqrt{5}}{\sqrt{5}+5} & \tfrac{5\sqrt{5}}{\sqrt{5}-5} & 3
	\end{array}
	\right].
\end{equation}
This operator is associated with the exact projection operators $\chi_{-1}=[1,0,\ldots,0]^T$ and $\chi_{1}=[0,\ldots,0,1]^T$. Applying the Runge-Kutta characterization derived in Section \ref{Sec:Runge-Kutta_Analogy}, the coefficients of the equivalent Runge-Kutta scheme are
\begin{equation}\label{Eq:LGL2}
	A= \left[ \begin {array}{cccc} 
	\frac{1}{12}
	 & -\frac{-\sqrt{5}}{12}
	 & \frac{-\sqrt{5}}{12}
	 & -\frac{1}{12}\\ \noalign{\medskip}
	 
	 \frac{1}{12}
	 &\frac{1}{4}
	 &{\frac {10-7\sqrt {5}}{60}}
	 &{\frac {\sqrt {5}}{60}}\\ \noalign{\medskip}
	 
	 \frac{1}{12}
	 &{\frac {10+7\sqrt {5}}{60}}
	 &\frac{1}{4}
	 &-{\frac {\sqrt {5}}{60}}\\ \noalign{\medskip}
	 
	 \frac{1}{12}
	 &{\frac {5}{12}}
	 &{\frac {5}{12}}
	 &\frac{1}{12}\end {array} \right],
\end{equation}
and
\begin{equation}\label{Eq:LGL3}
	b = \left[
	\begin{array}{cccc}
	\tfrac{1}{12} 
	& \tfrac{5}{12}
	& \tfrac{5}{12}
	& \tfrac{1}{12} 
	\end{array}
	\right],
\end{equation}
with abscissa:
\begin{equation}
	\mathbf{c} = \left[
	\begin{array}{cccc}
	0 & \tfrac{1}{2} - \tfrac{1}{10}\sqrt{5} & \tfrac{1}{2} + \tfrac{1}{10}\sqrt{5} & 1
	\end{array}
	\right]^T.
\end{equation}
This is the four-point Lobatto IIIC discontinuous-collocation Runge-Kutta scheme \cite{HairerIII,Chipman:1971,Ehle:1969,Axelsson:1972}. A similar approach can be applied on Radau quadrature points, leading to the Radau IA and IIA  discontinuous-collocation Runge-Kutta schemes. These scheme are all L-stable, BN-stable, and energy stable. Interestingly, the Radau IIA scheme also satisfies the stage-order conditions $\hat{q}= q+1$, one order greater than guaranteed by the GSBP theory (Lemma \ref{Lem:Cq}).

%------------------------------------------------------------------------------------------------------------------
%
\subsection{Gauss GSBP time-marching methods}\label{Sec:LG}
%
%------------------------------------------------------------------------------------------------------------------

The generalized framework cannot be used in the same way to construct the well-known order $2n$ Gauss collocation Runge-Kutta methods of Butcher \cite{Butcher:1964} and Kuntzmann \cite{Kuntzmann:1961}. The reason is that these schemes are not L-stable, which all GSBP time-marching methods are. However, using the GSBP approach we can construct a class of time-marching methods of order $2n-1$ on the Gauss points which are L-stable. Consider the four-node Gauss points in the interval $[-1,1]$, shown here to sixteen decimal places:
\begin{equation}
{\tiny
	\mathbf{t} = \left[
	\begin{array}{cccc}
	-0.8611363115940526 &
	-0.3399810435848563 &
	 0.3399810435848563 &
	 0.8611363115940526
	\end{array}
	\right]^T.
}
\end{equation}
The corresponding norm, whose entries are the Gauss quadrature weights, and resulting GSBP operator are:
\begin{equation}
{\tiny
	H= \left[ 
	\begin {array}{cccc}
	0.3478548451374539 & & & \\ [1ex]
	 & 0.6521451548625461 & & \\ [1ex]
	 & & 0.6521451548625461 & \\ [1ex]
	 & & & 0.3478548451374539
	\end{array}
	\right], 
}
\end{equation}
\begin{equation}
{\tiny
	D = \left[
	\begin{array}{cccc}
	-3.3320002363522817 &  4.8601544156851962 & -2.1087823484951789 &  0.5806281691622644 \\[1ex]
	-0.7575576147992339 & -0.3844143922232086 &  1.4706702312807167 & -0.3286982242582743 \\[1ex]
	 0.3286982242582743 & -1.4706702312807167 &  0.3844143922232086 &  0.7575576147992339 \\[1ex]
	-0.5806281691622644 &  2.1087823484951789 & -4.8601544156851962 &  3.3320002363522817
	\end{array}
	\right],
}
\end{equation}
with projection operators
\begin{equation}
{\tiny
\chi_{-1}=\left[
	\begin{array}{cccc}
	1.5267881254572668 & -0.8136324494869273 &  0.4007615203116504 & -0.1139171962819899
	\end{array}
	\right]^T
}
\end{equation}
and
\begin{equation}
{\tiny
\chi_{1}=\left[
	\begin{array}{cccc}
	 -0.1139171962819899 & 	0.4007615203116504 & -0.8136324494869273 & 1.5267881254572668 
	\end{array}
	\right]^T
}
\end{equation}
These operators can also be derived using the spectral-element approach. Applying the Runge-Kutta characterization, the coefficients of the equivalent Runge-Kutta scheme are
\begin{equation}
{\tiny
	A= \left[ \begin {array}{cccc} 
	0.0950400941860569 & -0.0470608105772507 &  0.0330840931816566 & -0.0116315325874891 \\[1ex]
	0.1772065313616314 &  0.1906741915282288 & -0.0555183314150631 &  0.0176470867327749 \\[1ex]
	0.1781035081124255 &  0.3263151032211517 &  0.1906741915282288 & -0.0251022810693778 \\[1ex]
	0.1694061893528291 &  0.3339017452341202 &  0.3322201270240200 &  0.0950400941860569
	\end {array} \right],
}
\end{equation}
and
\begin{equation}
{\tiny
	b = \left[
	\begin{array}{cccc}
	0.0869637112843635 & 	0.1630362887156365 & 	0.1630362887156365 & 	0.0869637112843635
	\end{array}
	\right],
}
\end{equation}
with abscissa:
\begin{equation}
{\tiny
	\mathbf{c} = \left[
	\begin{array}{cccc}
	0.0694318442029737 & 	0.3300094782075719 & 	0.6699905217924281 & 	0.9305681557970263
	\end{array}
	\right]^T.
}
\end{equation}
The abscissa and solution update vector $b$ are equivalent to the eighth order method of Butcher and Kuntzmann; only the $A$ coefficient matrix differs. The GSBP time-marching method is seventh-order, one order lower than the method of Butcher and Kuntzmann, but is L-stable. Using the GSBP approach, other methods in this class are very straight forward to derive.

%------------------------------------------------------------------------------------------------------------------
%
\subsection{Diagonally-implicit GSBP methods}\label{Sec:DIGSBP}
%
%------------------------------------------------------------------------------------------------------------------

In this section, diagonally-im\-plic\-it Runge-Kutta schemes are constructed which are based on GSBP operators. Diagonally-im\-plic\-it methods are often more efficient than fully-implicit schemes, especially in terms of memory usage, and are therefore of particular interest. For these examples, coefficients of the GSBP operator are first constrained such that the resulting Runge-Kutta scheme is diagonally-implicit and satisfies the minimal requirements of Definition \ref{Def:SBPGen}. The former is done by using the fact that the inverse of a lower triangular matrix is also lower triangular. Therefore, decomposing $\Theta$ into symmetric $\Theta_S=\frac{1}{2}\tilde{E}$ and anti-sym\-met\-ric components $\Theta_A$ (See \textit{e.g.} \cite{DBZ:2014}), the coefficients of $\Theta_A$ are chosen such that $H^{-1}(\Theta+\chi_{t_0}\chi_{t_0}^T)$, the inverse of the coefficient matrix $A$, is lower triangular. The remaining coefficients in the GSBP operator and corresponding Runge-Kutta scheme, including the distribution of solution points and weights of the associated quadrature, are solved for using the full Runge-Kutta order conditions.

The first example is a novel three-stage third-order diagonal-norm GSBP scheme. Several coefficients are determined by solving the order conditions \eqref{Eq:FRKOC}. The remaining free coefficients are chosen to minimize the L$_2$-norm of the fourth-order conditions \cite{Prince:1981}. The abscissa of the GSBP operator to sixteen decimal places is:
\begin{equation}
{\tiny
\mathbf{t}= \left[ \begin {array}{ccc}   
 0.0585104413419415 & 
 0.8064574322792799 & 
 0.2834542075672883
 \end {array}
 \right]^T,
 }
\end{equation}
which is already chosen to be for the domain $[0,1]$. Likewise, the norm is determined to be:
\begin{equation}
{\tiny
H= \left[ \begin {array}{ccc}  
0.1008717264855379 & &\\ [1ex]
 & 0.4574278841698629 & \\ [1ex]
 & & 0.4417003893445992
\end {array} 
\right],
}
\end{equation}
which are the weights of the associated quadrature rule, and the GSBP derivative operator is:
\begin{equation}
{\tiny
D_1= \left[ \begin {array}{ccc} 
-12.3737796851209214 & -3.4099304182988046 & 15.7837101034197260\\ [1ex]
- 1.6186577488308495 &  1.2158491567586837 &  0.4028085920721658\\ [1ex]
- 0.9626808228023090 &  1.4979849320764039 & -0.5353041092740949
\end {array} \right],
}
\end{equation}
with projection operators:
\begin{equation}
{\tiny
\chi_{{0}}= \left[ 
\begin {array}{ccc} 
  1.7239953104443755 &   0.1995165337199744 & - 0.9235118441643498
\end {array} \right] ^T,
}
\end{equation}
and
\begin{equation}
{\tiny
\chi_{{1}}= \left[ 
\begin {array}{ccc} 
- 0.6898048930346554 &   1.0733748002069487 &   0.6164300928277068
\end {array} \right]^T.
}
\end{equation}
It is interesting to note that the distribution of solution points is not ordered, $t_i \ngtr t_{i-1}$. This is not uncommon for time-marching methods, however, this restriction is often imposed on GSBP operators for spatial applications (see \textit{e.g.} \cite{DBZ:2014}). Furthermore, unlike the schemes considered in Sections \ref{Sec:LGL} and \ref{Sec:LG} which are constructed from collocated spectral-element operators, these were derived without any connection to basis functions. The additional flexibility enables a third-order method that is diagonally-implicit to be constructed with only three points. The equivalent diagonally-implicit Runge-Kutta scheme has the following coefficient matrices:
\begin{equation}
{\tiny
A= \left[ \begin {array}{ccc} 
0.0585104413426586 & &\\ [1ex]
0.0389225469556698 &  0.7675348853239251 &\\ [1ex]
0.1613387070350185 & -0.5944302919004032 & 0.7165457925008468
\end {array} \right] ,
}
\end{equation}
and
\begin{equation}
{\tiny
b= \left[ \begin {array}{ccc}  
0.1008717264855379 & 
0.4574278841698629 & 
0.4417003893445992
\end {array} \right],
}
\end{equation}
with $\mathbf{c}=\mathbf{t}$. Even though the GSBP derivative operator is dense, the resulting Runge-Kutta scheme is diagonally-implicit. In addition, since the norm associated with the GSBP operator is diagonal, the scheme is by definition L-stable, BN-stable, and energy-stable.

As a second example, a four-stage fourth-order diagonally-implicit GSBP scheme is constructed. This goes beyond the order guaranteed by the GSBP theory alone. The full Runge-Kutta order conditions are used to derive all of the coefficients of the GSBP operators and hence the equivalent Runge-Kutta scheme. The nodal distribution of the scheme determined by solving the full Runge-Kutta order conditions \eqref{Eq:FRKOC} is to sixteen decimal places:
\begin{equation}
{\tiny
\mathbf{t}= \left[ \begin {array}{cccc}
0.5975501145870646 & 
0.1236947892666459 & 
0.9813648784844768 & 
0.2188347157850838 
\end {array} \right] ,
}
\end{equation}
already chosen to be for the domain $[0,1]$. Likewise, the corresponding norm is:
\begin{equation}
{\tiny
H= \left[ \begin {array}{cccc} 
 0.5263633266867775 &  &  &  \\
  & 0.3002573924935185 &  &  \\
  &  & 0.1447678514141155 &  \\
  &  &  & 0.0286114294055885
\end {array} \right] ,
}
\end{equation}
which defines the weights of the associated quadrature rule, and the GSBP derivative operator is:
\begin{equation}
{\tiny
D= \left[ \begin {array}{cccc} 
 0.1993658318073258 & -1.654157580888287  &  1.006020084619771  &   0.4487716644611903 \\[1ex]
-1.648792506689303  & -1.212963928918776  &  1.978966716941006  &   0.8827897186670728 \\[1ex]
 3.217338082860363  & -1.615712813301921  & -0.4880781006041668 &  -1.113547168954275 \\[1ex]
 1.271022350640990  & -0.6382938457303877 &  0.6005231745715582 &  -1.233251679482160
\end {array} \right] ,
}
\end{equation}
with projection operators:
\begin{equation}
{\tiny
\chi_{{0}}= \left[ 
\begin {array}{cccc} 
0.8808689243587871 & 
0.9884420520048577 & 
-0.6011474168414327 & 
-0.2681635595222120
\end {array} \right] ,
}
\end{equation}
and
\begin{equation}
{\tiny
\chi_{{1}}= \left[ 
\begin {array}{cccc} 
0.9928785357819795 & 
-0.4986129934126102 & 
0.4691078563418350 & 
0.03662660128879568
\end {array} \right].
}
\end{equation}
Applying the Runge-Kutta characterization of these operators yields the Runge-Kutta coefficient matrices:
\begin{equation}
{\tiny
 A = \left[ 
\begin {array}{cccc} 
 0.5975501145870646 &  &  & \\[1ex]
-0.3662683378362842 &  0.4899631271029300 &  & \\[1ex]
-0.9122346095222909 &  1.395636663278596 &  0.4979628247281717 & \\[1ex]
 4.870201094711127  & -3.007233691002447 & -2.425297972138512  &  0.7811652842149162
\end {array} \right] ,
}
\end{equation}
and 
\begin{equation}
{\tiny
b^T = \left[ 
\begin {array}{cccc} 
0.5263633266867775 & 
0.3002573924935185 & 
0.1447678514141155 & 
0.02861142940558849
\end {array} \right] ,
}
\end{equation}
with abscissa $\mathbf{c}=\mathbf{t}$. Since this Runge-Kutta scheme is constructed from a diagonal-norm GSBP operator, it is L-stable, linearly-stable, algebraically-stable and energy-stable. It is also interesting to note that fourth-order is the highest order possible for diagonally-implicit Runge-Kutta schemes which are algebraically-stable \cite{Hairer:1980}. Therefore, to construct a diagonally-implicit GSBP scheme of order greater than four, it must not be algebraically stable, and therefore cannot be based on a diagonal-norm GSBP operator.

%==================================================================================================================
%
\section{Numerical Examples}\label{Sec:Results}
%
%==================================================================================================================

This section examines the efficiency of various fully-implicit time-marching methods based on classical SBP and GSBP operators.  We solve the linear convection equation with unit wave speed and periodic boundary conditions:
\begin{equation}\label{LC2}
\begin{split}
	\frac{\partial \mathcal{U}}{\partial t} = - \frac{\partial \mathcal{U}}{\partial x}, 
	\quad x\in[0,2],\\[1ex]
	\mathcal{U}(t=0,x) = \sin(2\pi x),\\[1ex]
	\mathcal{U}(t,x=0) = \mathcal{U}(t,x=2),
\end{split}
\end{equation}
The spatial derivative is discretized with a $100$-block GSBP-SAT discretization, where each block is a $5$-node operator associated with Legendre-Gauss quadrature \cite{DBZ:2014}. This leads to an IVP of the form:
\begin{equation}\label{eq:LCIVP}
	\frac{d \mathcal{Y}}{d t} = \mathcal{A}\mathcal{Y}, \quad \mathcal{Y}_0 = \sin(2\pi \mathbf{x}),
\end{equation}
where $\mathcal{A}$ is a $500\times500$ matrix associated with the spatial discretization. The exact solution of this IVP  is $\mathcal{Y} = e^{\mathcal{A}t}\mathcal{Y}_0$. Applying a Runge-Kutta time-marching method leads to a linear system of equations, which is stored in Matlab's sparse format. Reverse Cuthill-McKee reordering is applied to the linear system and solved using the backslash operator. The solutions were computed using MATLAB 2013a on a 6-core intel Core i7-3930K processor at 3.2GHz with 32GB of RAM.

A summary of the GSBP and non-GSBP time-marching methods investigated is presented in Table \ref{Tab:Abb} along with their associated properties and abbreviations used hereafter. Note that the Radau IIA schemes have stage order $n$, one higher than the associated GSBP operator, which is limited to $n-1$. All methods were implemented as Runge-Kutta schemes. %Table \ref{Tab:Abb2} contains similar information regarding the known Runge-Kutta methods used in the comparison.

\begin{table}[!t]
	\centering
	\begin{tabular}{l|l|l|c|c|c}
		\multicolumn{6}{l}{Classical SBP Methods}\\[1ex]
		\hline
		&&&&&\\[-1.5ex]
		Quadrature & Norm & Label	& $\hat{q}$ & $p$ & L/BN-stable\\
		\hline
		&&&&&\\[-1.5ex]
		Gregory type		& Diag. \cite{Kreiss1974,Nordstrom:2013,Nordstrom:2014} & FD	& $\frac{n}{4}$	& $\frac{n}{2}$ & Y \ /\ Y\\[0.5ex]
						& Block \cite{Strand1994,Nordstrom:2013,Nordstrom:2014} & FDB	& $\frac{n}{2}-1$	& $\frac{n}{2}$ & Y \ /\ N\\[0.5ex]
									
		\multicolumn{6}{l}{}\\
		\multicolumn{6}{l}{GSBP Methods}\\[1ex]
		\hline
		&&&&&\\[-1.5ex]
		Quadrature & Norm & Label	& $\hat{q}$ & $p$ & L/BN-stable\\
		\hline
		&&&&&\\[-1.5ex]
		Newton-Cotes 	& Diag. \cite{DBZ:2014} & NC		& $\lceil\frac{n}{2}\rceil$	& $2\lceil\frac{n}{2}\rceil$  & Y \ /\ Y\\[0.5ex]
									& Dense \cite{Carpenter1996} & NCD	& $n-1$	& $2\lceil\frac{n}{2}\rceil$ 	 & Y \ /\ N\\[0.5ex]
		Lobatto			& Diag.$^{*\dagger}$ \cite{Gassner2013,Chipman:1971,Ehle:1969,Axelsson:1972} & LGL		& $n-1$	& $2n-2$  & Y \ /\ Y\\[0.5ex]
									& Dense	\cite{Carpenter1996} & LGLD		& $n-1$	& $2n-2$  & Y \ /\ N\\[0.5ex]
		Radau IA			& Diag.$^{*\dagger}$ \cite{DBZ:2014,Ehle:1969,Axelsson:1972} & LGRI		& $n-1$	& $2n-1$  & Y \ /\ Y\\[0.5ex]
		Radau IIA		& Diag.$^{*\dagger}$ \cite{Boom:Thesis,Ehle:1969,Axelsson:1972} & LGRII	& $n$	& $2n-1$  & Y \ /\ Y\\[0.5ex]
		Gauss 			& Diag.$^\dagger$ \cite{DBZ:2014} & LG		& $n-1$	& $2n-1$  & Y \ /\ Y \\[0.5ex]		
			
		\multicolumn{6}{l}{}\\
		\multicolumn{6}{l}{non-SBP Methods}\\[1ex]
		\hline
		&&&&&\\[-1.5ex]
		Quadrature & Norm & Label	& $\hat{q}$ & $p$ & L/BN-stable\\
		\hline
		&&&&&\\[-1.5ex]
		Gauss 			& $^*$ \cite{Butcher:1964,Kuntzmann:1961} & GRK		& $n$	& $2n$  & N \ /\ Y \\[0.5ex]	
		ESDIRK5 			& $^*$ \cite{Boom:Thesis} & ESDIRK5		& $2$	& $5$  & Y \ /\ N \\[0.5ex]

	\end{tabular}
	\caption{Summary of SBP operators, their associated abbreviations and general properties. Notes: 1) the general properties of diagonal-norm NC operators only hold for the case of positive quadrature weights; 2) FD and FDB methods were implemented with their minimum number of stages; 3) the value for $q$ given for FD applies only to $q\ge 2$. The $^*$ denotes existing methods in the Runge-Kutta literature, and the $^\dagger$ denotes a method discussed in Section \ref{Sec:Methods}}
	\label{Tab:Abb}
\end{table}

%------------------------------------------------------------------------------------------------------------------
%
\subsection{Efficiency Comparisons}
%
%------------------------------------------------------------------------------------------------------------------

For the study of efficiency, the temporal domain is chosen to be $t\in[0,2]$ and the SAT penalty values are chosen such that both the temporal and spatial discretizations are dual-consistent. Two error measures are used. The first  is the stage error:
\begin{equation}
	e_\mathrm{stage} = \big|\big| \mathbf{e} \big|\big|_{B},
\end{equation}
where $B$ is a block diagonal matrix. For SBP and GSBP time-marching methods, the blocks are formed by the norm associated with method. For non-SBP Runge-Kutta methods, the diagonal of each block is populated with the entries of the $b$ coefficient matrix. The vector $\mathbf{e}$ contains the error in the numerical solution at the abscissa locations, integrated in space using the norm of the spatial discretization $H_s$:
\begin{equation}
	\mathbf{e}_{(j-1)n+k} = ||\mathbf{y}_{\mathrm{d},(j-1)n+k} -\mathcal{Y}((j+c_k)h)||_{H_s}, 
\end{equation}
where the subscripts $j=1,\ldots,N$ and $k=1,\ldots,n$ are the step and stage indices, respectively. By comparing with the exact solution of the IVP, we isolate the temporal error from the spatial error. The second error measure used is the solution error at the end of the final time step, integrated in space:
\begin{equation}
	e_\mathrm{step} = \big|\big| \tilde{y}_{\mathrm{d},N} - {\mathcal{Y}}(T) \big|\big|_{H_s}.
\end{equation}

Figure \ref{Fig:CD} shows the convergence of the stage and solution error with respect to CPU time in seconds for constant stage order, $\hat{q}=3$. The stage error, $e_\mathrm{stage}$, converges at the same rate for the various methods, as expected. Furthermore, the hierarchy in efficiency with respect to stage error negatively correlates with the number of stages in each method. Thus, the classical SBP time-marching methods, FD and FDB, are the least efficient due the their relatively large number of stages. Of those with four stages, the novel GSBP time-marching method LG is the most efficient method with respect to stage error. This scheme is more efficient than the well-known LGRI scheme, which has the same properties; however, it is not as efficient as the three-stage GRK or LGRII methods.

Considering the solution error, $e_\mathrm{step}$, the hierarchy of efficiency remains negatively correlated with the number of stages in each method of a given order $p$. The higher than expected convergence rate for the dense-norm LGL time-marching method ($p=\rho+1$) is only seen for linear problems.  As expected, the GSBP time-marching methods are more efficient than those based on classical SBP operators. This is especially true for those with a nonuniform abscissa or an abscissa which does not include $0$ or $1$. The most efficient method overall is GRK. It is one order lower than the GSBP time-marching methods LG and LGRI, but also has one less stage. This method is not L-stable. Eventually, the LG and LGRI methods become more efficient than the GRK method below an error of about $10^{-7}$.

\begin{figure}[!t]
	\centering
	{%\scriptsize
	
	\begin{tabular}{cc}
	\hspace{-0.35cm} 
	\includegraphics[width=0.48\textwidth ]{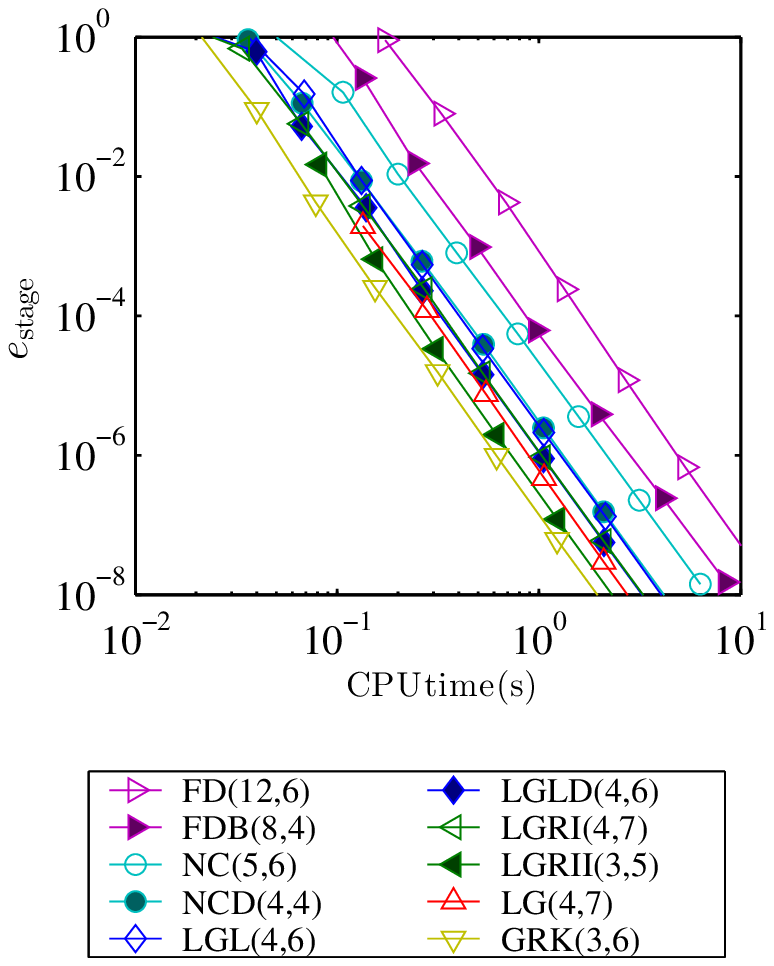} &
	\hspace{-0.55cm}
	\includegraphics[width=0.48\textwidth ]{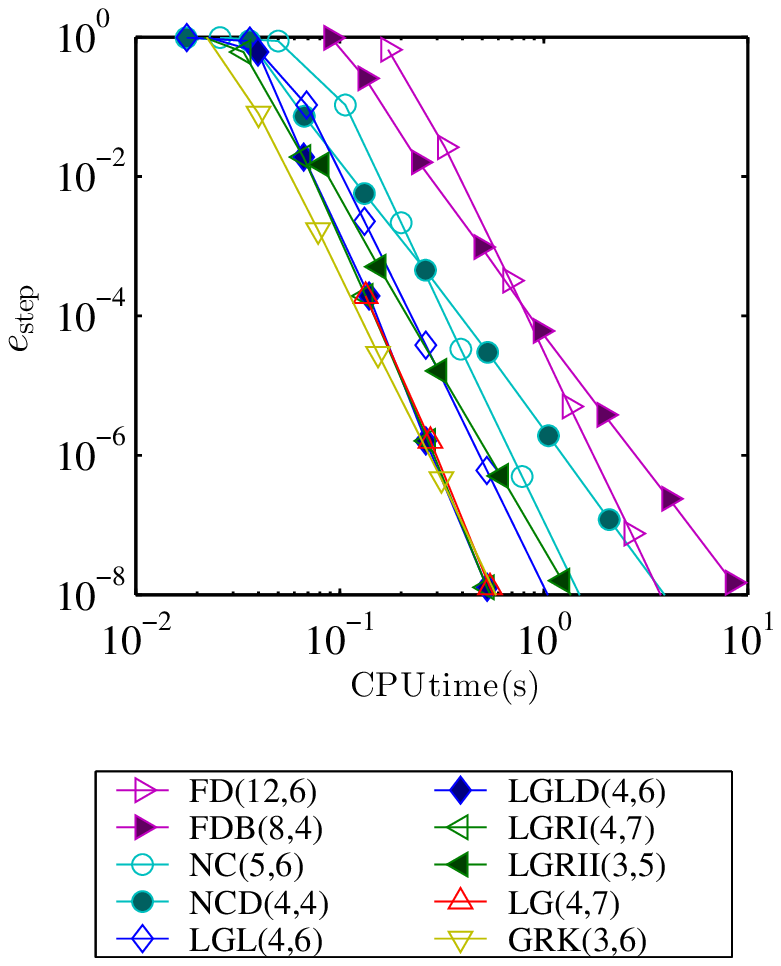}  \\
	\hspace{-0.35cm} 
	a) $e_\mathrm{stage}$ $(\hat{q}=3)$ &
	\hspace{-0.55cm}
	b) $e_\mathrm{step}$ $(\hat{q}=3)$ 
	\end{tabular}
	}
	\caption{{\bf Linear Convection Equation:} Convergence of the stage and solution error, $e_\mathrm{stage}$ and $e_\mathrm{step}$ respectively, with respect to CPU time $(s)$ for constant stage order. The numerical suffix in the legend indicates the number of stages in each time step $n$ and the order $p$. 	\label{Fig:CD}}
\end{figure}

Another perspective can be obtained by comparing methods of constant order $p$. Figure \ref{Fig:CD2} shows the convergence of the solution error with respect to step size $h = t_f-t_0$ and CPU time in seconds for constant order, $p=6$. This also includes the exclusively odd order GSBP time-marching methods based on Gauss quadrature of orders $p=5$ and $p=7$, as well as the fifth-order ESDIRK5 reference scheme. The error of classical SBP time-marching methods relative to time step size $h$ is significantly smaller than the GSBP time-marching methods. This however does not account for the higher number of stages. Therefore, the GSBP time-marching methods are nevertheless more efficient, as shown in Figure \ref{Fig:CD2} b). Apart from LGLD, which achieves higher than expected convergence, the well-known GRK scheme is the most efficient sixth-order scheme. As discussed above, the LG scheme of one order higher eventually becomes more efficient. It is also L-stable, which the GRK scheme is not.

\begin{figure}[!t]
	\centering
	{%\scriptsize
	
	\begin{tabular}{cc}
	\hspace{-0.35cm} 
	\includegraphics[width=0.48\textwidth ]{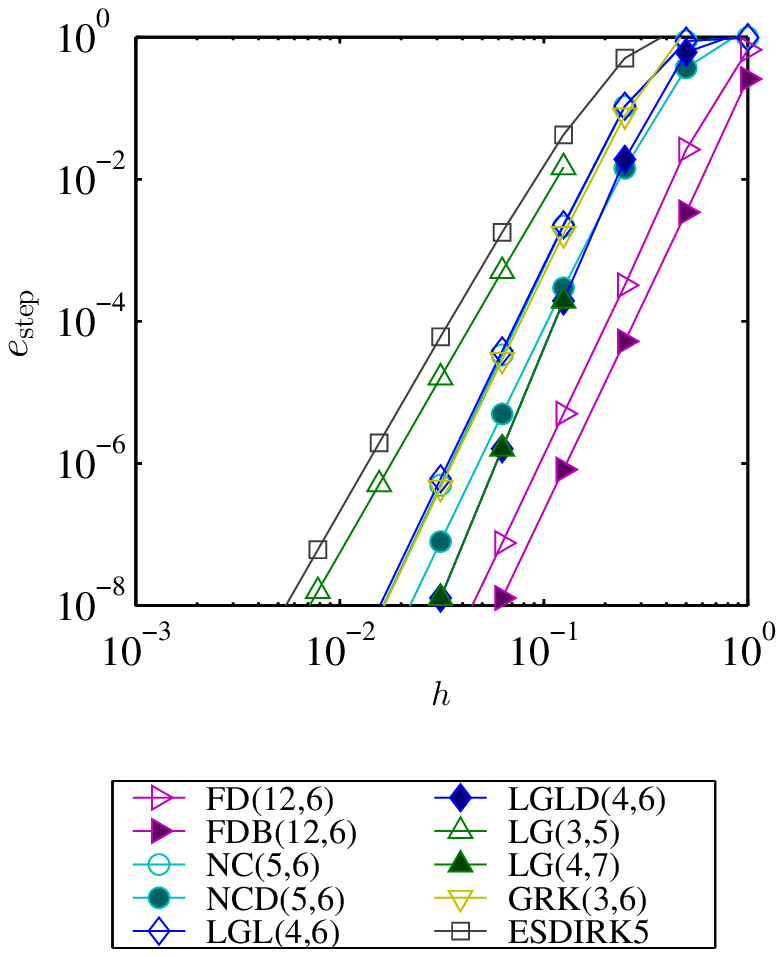} &
	\hspace{-0.55cm}
	\includegraphics[width=0.48\textwidth ]{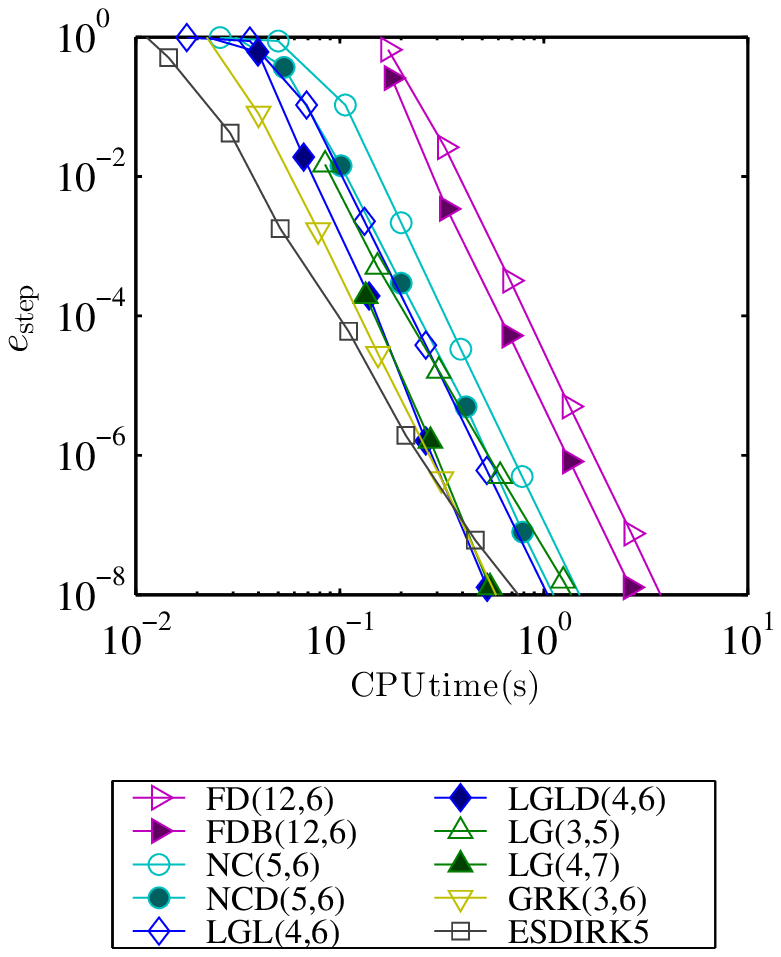}  \\
	\hspace{-0.35cm} 
	a) $e_\mathrm{stage}$ &
	\hspace{-0.55cm}
	b) $e_\mathrm{step}$   
	\end{tabular}
	}
	\caption{{\bf Linear Convection Equation:} Convergence of the solution error, $e_\mathrm{step}$, with respect to step size $h$ and CPU time $(s)$. The numerical suffix in the legend indicates the number of stages in each time step $n$ and the order $p$.}
	\label{Fig:CD2}
\end{figure}

%------------------------------------------------------------------------------------------------------------------
%
\subsection{Diagonally-implicit methods}
%
%------------------------------------------------------------------------------------------------------------------

While diagonally-implicit time-marching methods generally require a greater number of stages than fully-implicit schemes to achieve a prescribed order of accuracy, each stage can be solved sequentially. This replaces a single large system of equations for each time step with a smaller system of equations for each stage within a time step. The motivation for diagonally-implicit schemes comes from the nonlinear scaling of computational work with respect to the number of equations in a system.

For lower orders this effect is minimized as fully-implicit time-marching methods only require a few coupled stages. For example, the three-stage diagonally-implicit GSBP time-marching method developed in Section \ref{Sec:Methods} requires approximately the same computational effort as the two-stage LG, LGRI, and LGRII schemes of the same order. The fully-implicit schemes, however, have a much lower truncation error coefficient and are therefore more efficient.

As the order increases, so does the number of stages required by fully-implicit schemes. This is when diagonally-implicit scheme have the potential to be more efficient. As an example, consider the non-SBP ESDIRK5 scheme presented in Figure \ref{Fig:CD2}. It has the largest error as a function of step size $h$, but is the most efficient scheme considered above an error of about $10^{-7}$. This highlights the potential advantage of considering higher-order diagonally-implicit GSBP time-marching methods in the future.

%========================================================================================
%
\section{Conclusions}\label{Sec:Conc}
%
%========================================================================================

This article combines the generalized summation-by-parts framework originally presented in \cite{DBZ:2014} and the work of \cite{Nordstrom:2013,Nordstrom:2014} on the construction of time-marching methods based on FD-SBP operators. GSBP time-marching methods are shown to maintain the same stability and accuracy properties as those based on classical FD-SBP operators.  Specifically, all GSBP time-marching methods are shown to be L-stable: unconditional stability for linear IVPs along with damping of stiff modes. Those constructed with a diagonal norm are shown in addition to be BN-stable: unconditionally stable for contractive problems. The theory of superconvergent integral functionals, as well as the solution approximated at the end of a time step, is also extended to the generalized framework. The rate of superconvergence is shown to be connected to the accuracy with which the norm of the discretization approximates inner products of the primal and dual problem. This specifically includes the SAT term.

This article also shows the connection between SBP/GSBP time-marching methods and implicit Runge-Kutta methods. The connection to Runge-Kutta methods is used to derive minimum guarantee global order results for nonlinear problems. It is also used to derive the conditions under which BN-stable dense-norm GSBP time-marching methods can be constructed. While SBP/GSBP time-marching method form a subset of implicit Runge-Kutta methods, the SBP/GSBP characterization remains important. The approach simplifies the construction of high-order fully-implicit time-marching methods with a particular set of properties favourable for stiff IVPs. This can even lead to some novel Runge-Kutta schemes, as shown in the article. It also facilitates the analysis of fully-discrete approximations of PDEs and is amenable to multi-dimensional space-time discretizations. In the latter case, the explicit connection to Runge-Kutta methods is often lost.

A few examples of known and novel Runge-Kutta time-marching methods are presented which are associated with GSBP operators. This includes the known four-stage Lobatto IIIC method, a novel four-stage seventh-order Gauss-based fully-implicit scheme, and two novel diagonally-implicit schemes. The first is a three-stage third-order diagonally-implicit method, the second a fourth-order four-stage diagonally-implicit method. These methods are all L-stable and BN-stable. Numerical simulation of the linear convection equation is also presented to demonstrate the theory and to evaluate the relative efficiency of GSBP time-marching methods. In comparison with classical SBP time-marching methods, the GSBP based schemes considered are more efficient. Between GSBP time-marching methods, the novel Gauss based GSBP time-marching method retains the properties of the Radau IA scheme, and is slightly more efficient with respect to stage error. The global error however is comparable. Comparison with the non-SBP Gauss collocation methods is difficult as their orders do not match. For the same number of stages, the non-SBP method is one order higher and more efficient; however, it is not L-stable. When the SBP method is one order higher, the efficiency is comparable. Inclusion of a fifth-order ESDIRK scheme highlights the potential benefit of constructing higher-order GSBP time-marching methods in the future which are diagonally-implicit.

\section*{Acknowledgements}

The authors gratefully acknowledge the financial assistance of the Ontario Graduate Scholarship program and the University of Toronto.

%\FloatBarrier

%% The Appendices part is started with the command \appendix;
%% appendix sections are then done as normal sections
%% \appendix
\appendix

%------------------------------------------------------------------------------------------------------------------
%
\section{The Runge-Kutta Abscissa of a GSBP Operator}\label{App:Abscissa}
%
%------------------------------------------------------------------------------------------------------------------

This appendix examines the the application of SBP and GSBP operators and their associated projection operators to an abscissa  rescaled and translated: $\mathbf{c} = \frac{\mathbf{t} - \mathds{1}t_0^{[m]}}{h}$. These results simplify several proofs using the Runge-Kutta characterization in Section \ref{Sec:Runge-Kutta_Analogy}. This is presented as a series of three lemmas. The first defines the result of a GSBP first-derivative operator applied to the abscissa:

\begin{lemma}\label{Lem:AbDer}
	An SBP or GSBP first-derivative operator $D$ defined for the distribution of solution points $\mathbf{t}^{[m]}$ of order greater than or equal to $p$ applied to a monomial of the abscissa $\mathbf{c} = \frac{\mathbf{t} - \mathds{1}t_0^{[m]}}{h}$ of degree $p\ge 0$ yields
	\begin{equation}\label{Eq:Der_Ab}
		D \mathbf{c}^{p} = \frac{p}{h}\mathbf{c}^{p-1}.
	\end{equation}

\end{lemma}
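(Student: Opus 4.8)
The plan is to exploit the defining accuracy property of a GSBP operator, namely $D\mathbf{t}^j = j\mathbf{t}^{j-1}$ for $j\in[0,q]$ from Definition \ref{Def:SBPGen}, and to reduce the claim to a binomial bookkeeping exercise. Since the abscissa $\mathbf{c} = (\mathbf{t} - \mathds{1}t_0^{[m]})/h$ is an affine (shift-and-scale) image of $\mathbf{t}$ taken entrywise, the statement is morally just the translation-and-scaling rule for differentiation; the real content is that $D$ reproduces this rule \emph{exactly}, provided every monomial involved stays at or below the order of the operator.

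First I would expand the monomial entrywise using the binomial theorem,
\begin{equation}
	\mathbf{c}^{p} = \frac{1}{h^{p}}\left(\mathbf{t} - \mathds{1}t_0^{[m]}\right)^{p} = \frac{1}{h^{p}}\sum_{i=0}^{p}\binom{p}{i}\mathbf{t}^{p-i}\left(-t_0^{[m]}\right)^{i},
\end{equation}
where all powers are entrywise. Applying $D$ and using its linearity produces a sum of terms $D\mathbf{t}^{p-i}$. Because the operator has order at least $p$ and $0\le p-i\le p\le q$ for every index in the sum, Definition \ref{Def:SBPGen} applies to each term and gives $D\mathbf{t}^{p-i} = (p-i)\mathbf{t}^{p-i-1}$.

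The remaining step is algebraic simplification. The $i=p$ term carries a factor $(p-i)=0$ and drops out, so the sum effectively runs only to $i=p-1$. I would then invoke the elementary identity $\binom{p}{i}(p-i) = p\binom{p-1}{i}$ to factor out $p$ and recognize what is left as the binomial expansion of $(\mathbf{t}-\mathds{1}t_0^{[m]})^{p-1}$:
\begin{equation}
	D\mathbf{c}^{p} = \frac{p}{h^{p}}\sum_{i=0}^{p-1}\binom{p-1}{i}\mathbf{t}^{p-1-i}\left(-t_0^{[m]}\right)^{i} = \frac{p}{h^{p}}\left(\mathbf{t} - \mathds{1}t_0^{[m]}\right)^{p-1} = \frac{p}{h}\mathbf{c}^{p-1},
\end{equation}
which is the assertion.

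There is no serious analytic obstacle here; the one point requiring care--and the thing I would state explicitly--is the degree bookkeeping, since the GSBP accuracy identity is only guaranteed for monomials of degree at most $q$. I must therefore confirm that every exponent $p-i$ arising from the expansion satisfies $p-i\le q$, which is immediate from the hypothesis that $D$ has order at least $p$ (so $q\ge p$) together with $i\ge 0$. The companion projection lemmas \ref{Lem:Ab0} and \ref{Lem:Abf} would follow by precisely the same expand--apply--recombine strategy, substituting $\chi_{t_0}^T\mathbf{t}^j = t_0^{\,j}$ and $\chi_{t_f}^T\mathbf{t}^j = t_f^{\,j}$ from Condition \ref{Ass:E} in place of the derivative identity.
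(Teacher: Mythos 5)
Your proposal is correct and follows essentially the same route as the paper's proof: entrywise binomial expansion of $\mathbf{c}^p$, termwise application of $D\mathbf{t}^{j}=j\mathbf{t}^{j-1}$ (valid since every exponent $p-i\le p\le q$), the identity $\binom{p}{i}(p-i)=p\binom{p-1}{i}$, and recombination into $\frac{p}{h^{p}}(\mathbf{t}-\mathds{1}t_0^{[m]})^{p-1}=\frac{p}{h}\mathbf{c}^{p-1}$. Your explicit remark on the degree bookkeeping is a welcome clarification of a point the paper leaves implicit.
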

\begin{proof}
	In this article exponentiation of vectors is computed element-wise, \textit{i.e.}
	\begin{equation}\label{Eq:ab2}
		\mathbf{c}^p = [c_1^p,\ldots,c_n^p]^T.
	\end{equation}
	Substituting the definition of the abscissa \eqref{Eq:GLM3} into \eqref{Eq:ab2} and expanding yields
	\begin{equation}\label{Eq:ab4}
		\mathbf{c}^p = \frac{1}{h^p} \sum_{i=0}^p \binom{p}{i}(\mathbf{t}^{[m]})^{p-i}(-t_0^{[m]})^i,
	\end{equation}
	where $\binom{n}{k} = \frac{n!}{k!(n-k)!}$ is the binomial coefficient. Applying an SBP or GSBP operator of order greater than or equal to $p$ yields
	\begin{equation}
		D \mathbf{c}^{p} = 
		\frac{1}{h^p} \sum_{i=0}^p \binom{p}{i}D(\mathbf{t}^{[m]})^{p-i}(-t_0^{[m]})^i =
		\frac{1}{h^p} \sum_{i=0}^{p-1} \binom{p}{i}(p-i)(\mathbf{t}^{[m]})^{p-1-i}(-t_0^{[m]})^i.
	\end{equation}
	Substituting the relationship $(p-k)\binom{p}{k}=p\binom{p-1}{k}$ gives
	\begin{equation}
		D \mathbf{c}^{p} = \frac{p}{h^p} \sum_{i=0}^{p-1} \binom{p-1}{i}(\mathbf{t}^{[m]})^{p-1-i}(-t_0^{[m]})^i
		 = \frac{p}{h}\mathbf{c}^{p-1}.
	\end{equation}
	The additional factor of $\frac{1}{h}$ comes from the norm matrix $H$ of the SBP or GSBP operator $D=H^{-1}\Theta$ defined for the interval $[t_0^{[m]},t^{[m]}_f]$ of size $h$, rather than $[0,1]$ on which the abscissa is defined. 
\end{proof}

Next, consider the application of the projection operator $\chi_{t_0}$ to the abscissa:

\begin{lemma}\label{Lem:Ab0}
	A projection operator $\chi_{t_0}$ defined for the distribution of points $\mathbf{t}^{[m]}$ of order greater than or equal to $p$ applied to a monomial of the abscissa $\mathbf{c} = \frac{\mathbf{t} - \mathds{1}t_0^{[m]}}{h}$ of degree $p\ge 0$ yields:
	\begin{equation}
		\chi_{t_0}\mathbf{c}^p = \bigg\lbrace
		\begin{array}{c}
			1,\ \mathrm{if}\ p=0\\
			0,\ \mathrm{if}\ p>0			
		\end{array}
		.
	\end{equation}
\end{lemma}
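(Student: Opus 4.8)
The plan is to exploit the fact that the projection operator $\chi_{t_0}$ is, by Condition \ref{Ass:E}, exact on the monomials $\mathbf{t}^j$ up to degree $r\ge q\ge p$, so that applying $\chi_{t_0}^T$ to any polynomial of degree at most $r$ simply evaluates that polynomial at $t_0^{[m]}$. Since $\mathbf{c}^p$ is precisely the polynomial $\big(\tfrac{t-t_0^{[m]}}{h}\big)^p$ sampled at the nodes, its exact value at $t_0^{[m]}$ is $0^p$, which is $1$ when $p=0$ and $0$ when $p>0$. The work is to make this evaluation argument rigorous through the binomial expansion already established in the preceding lemma.

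Concretely, I would first reuse equation \eqref{Eq:ab4}, namely $\mathbf{c}^p = \frac{1}{h^p}\sum_{i=0}^p \binom{p}{i}(\mathbf{t}^{[m]})^{p-i}(-t_0^{[m]})^i$, and left-multiply by $\chi_{t_0}^T$. Moving $\chi_{t_0}^T$ inside the finite sum and invoking Condition \ref{Ass:E} in the form $\chi_{t_0}^T(\mathbf{t}^{[m]})^{p-i} = (t_0^{[m]})^{p-i}$ (valid because each exponent $p-i$ is at most $p\le r$) collapses every node-dependent term to a power of $t_0^{[m]}$. This yields $\chi_{t_0}^T\mathbf{c}^p = \frac{(t_0^{[m]})^p}{h^p}\sum_{i=0}^p \binom{p}{i}(-1)^i = \frac{(t_0^{[m]})^p}{h^p}(1-1)^p$.

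The final step is to read off the two cases. For $p>0$ the factor $(1-1)^p = 0$ gives $\chi_{t_0}^T\mathbf{c}^p = 0$ immediately. For $p=0$ the expansion degenerates and I would instead argue directly that $\mathbf{c}^0 = \mathds{1} = \mathbf{t}^0$, so $\chi_{t_0}^T\mathbf{c}^0 = \chi_{t_0}^T\mathbf{t}^0 = (t_0^{[m]})^0 = 1$ by the $j=0$ instance of Condition \ref{Ass:E}, which completes the result.

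I do not expect a genuine obstacle here; the only points requiring care are bookkeeping. First, one must check that the degree of each monomial appearing after expansion never exceeds the exactness order $r$ of the projection operator, which is guaranteed by the hypothesis that $\chi_{t_0}$ is of order at least $p$ together with $r\ge q\ge p$. Second, the $p=0$ case must be separated out, since the identity $(1-1)^p=0$ fails there and the convention $0^0=1$ would otherwise be ambiguous; handling it via $\mathbf{c}^0=\mathds{1}$ sidesteps this cleanly. Both subtleties are routine, so the lemma follows essentially by direct computation.
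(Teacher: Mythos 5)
Your proposal is correct and follows essentially the same route as the paper: both expand $\mathbf{c}^p$ via the binomial identity \eqref{Eq:ab4}, push $\chi_{t_0}^T$ through the sum using the exactness of the projection on $\mathbf{t}^j$, factor out $(t_0^{[m]})^p$ to expose $\sum_{i=0}^p\binom{p}{i}(-1)^i=0$ for $p>0$, and treat $p=0$ separately via $\chi_{t_0}^T\mathds{1}=1$. No differences worth noting.
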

\begin{proof}
	Begin by expanding $\chi_{t_0}^T\mathbf{c}^p$ using \eqref{Eq:ab4}:
	\begin{equation}\label{Eq:ab3}
		\chi_{t_0}^T\mathbf{c}^p  = 
		\frac{1}{h^p} \sum_{i=0}^p \binom{p}{i}\chi_{t_0}^T(\mathbf{t}^{[m]})^{p-i}(-t_0^{[m]})^i=
		\frac{1}{h^p} \sum_{i=0}^p \binom{p}{i}(t_0^{[m]})^{p-i}(-t_0^{[m]})^i.
	\end{equation}
	Pulling out a factor of $(t_0^{[m]})^p$ from the summation and given that $\sum_{i=0}^p \binom{p}{i}(-1)^i=0$ for $p>0$, \eqref{Eq:ab3} simplifies to
	\begin{equation}\label{Eq:s0_Ab}
		\chi_{t_0}^T\mathbf{c}^p  = 0 \quad \text{for }p>0.
	\end{equation}
	If $p=0$ then $\chi_{t_0}^T\mathbf{c}^p  = \chi_{t_0}^T\mathds{1}  = 1$ by Definition \ref{Def:SBPGen}.
\end{proof}

Finally, consider the application of the projection operator $\chi_{t_f}$ to the abscissa:

\begin{lemma}\label{Lem:Abf}
	A projection operator $\chi_{t_f}$ defined for the distribution of points $\mathbf{t}^{[m]}$ of order greater than or equal to $p$ applied to a monomial of the abscissa $\mathbf{c} = \frac{\mathbf{t} - \mathds{1}t_0^{[m]}}{h}$ of degree $p\ge 0$ yields:
	\begin{equation}
		\chi_{t_0}\mathbf{c}^p = 1.
	\end{equation}
\end{lemma}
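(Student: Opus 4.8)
The plan is to mirror the proof of Lemma \ref{Lem:Ab0} almost verbatim, exploiting the fact that $\chi_{t_f}$ reproduces powers of $t_f^{[m]}$ in exactly the same way that $\chi_{t_0}$ reproduces powers of $t_0^{[m]}$. (I read the statement as $\chi_{t_f}^T\mathbf{c}^p = 1$, correcting the apparent typo $\chi_{t_0}$.) First I would expand the monomial of the abscissa using the binomial identity \eqref{Eq:ab4}, namely $\mathbf{c}^p = \frac{1}{h^p}\sum_{i=0}^p \binom{p}{i}(\mathbf{t}^{[m]})^{p-i}(-t_0^{[m]})^i$, and left-multiply by $\chi_{t_f}^T$.

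Next I would invoke Condition \ref{Ass:E}, which guarantees $\chi_{t_f}^T(\mathbf{t}^{[m]})^{j} = (t_f^{[m]})^{j}$ for $j\in[0,r]$; since the operator has order at least $p$ and hence $r\ge p$, this reproduction is valid for every term in the sum. This converts the expression into the purely scalar sum $\chi_{t_f}^T\mathbf{c}^p = \frac{1}{h^p}\sum_{i=0}^p \binom{p}{i}(t_f^{[m]})^{p-i}(-t_0^{[m]})^i$.

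The final step is simply to recognize the right-hand sum as the full binomial expansion of $(t_f^{[m]}-t_0^{[m]})^p$, so that it equals $h^p$ by the definition $h=t_f^{[m]}-t_0^{[m]}$. The factor $h^p$ then cancels the prefactor $1/h^p$, leaving $\chi_{t_f}^T\mathbf{c}^p = 1$ for all $p\ge 0$. Note that this case is in fact cleaner than Lemma \ref{Lem:Ab0}: there, because $t_f$ was replaced by $t_0$, the two factors combined into $(t_0^{[m]})^p\sum_i\binom{p}{i}(-1)^i$ and one had to invoke the vanishing alternating sum $\sum_{i=0}^p\binom{p}{i}(-1)^i=0$; here the two distinct endpoints assemble directly into $(t_f^{[m]}-t_0^{[m]})^p$.

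There is essentially no genuine obstacle, since every ingredient is already in hand — the expansion \eqref{Eq:ab4}, the reproduction property of $\chi_{t_f}$ from Condition \ref{Ass:E}, and the binomial theorem. The only point requiring the smallest amount of care is confirming that the order hypothesis $r\ge p$ permits applying the projection identity to the highest-degree term $(\mathbf{t}^{[m]})^p$ (the $i=0$ term), which is exactly what the stated assumption on the order of the projection operator provides.
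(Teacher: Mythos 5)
Your proposal is correct and follows essentially the same route as the paper's proof: expand $\mathbf{c}^p$ via the binomial identity \eqref{Eq:ab4}, apply the reproduction property $\chi_{t_f}^T(\mathbf{t}^{[m]})^{j}=(t_f^{[m]})^{j}$, and recognize the resulting sum as the expansion of $h^p=(t_f^{[m]}-t_0^{[m]})^p$. You also correctly identify the typo in the statement ($\chi_{t_0}$ should read $\chi_{t_f}^T$).
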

\begin{proof}
	Begin by expanding $\chi_{t_f}^T\mathbf{c}^p$ using \eqref{Eq:ab4}:
	\begin{equation}\label{Eq:ab5}
		\chi_{t_f}^T\mathbf{c}^p = 
		\frac{1}{h^p} \sum_{i=0}^p \binom{p}{i}\chi_{t_f}^T(\mathbf{t}^{[m]})^{p-i}(-t_0^{[m]})^i=
		\frac{1}{h^p} \sum_{i=0}^p \binom{p}{i}(t_f^{[m]})^{p-i}(-t_0^{[m]})^i.
	\end{equation}
	Likewise, $h^p=(t_f^{[m]}-t_0^{[m]})^p$ can be expanded as 
	\begin{equation}
		h^p=(t_f^{[m]}-t_0^{[m]})^p=\sum_{i=0}^p \binom{p}{i}(t_f^{[m]})^{p-i}(-t_0^{[m]})^i.
	\end{equation}
	Therefore, simplifying \eqref{Eq:ab5} gives
	\begin{equation}\label{Eq:s1_Ab}
		\chi_{t_f}^T\mathbf{c}^p  = 1 \quad \text{for }p\ge 0.
	\end{equation}

\end{proof}

These relationships greatly simplify the analysis, as the components of SBP or GSBP time-marching methods can be applied directly to the abscissae found in the Runge-Kutta conditions.

\bibliographystyle{siam}
\bibliography{M101491.bib}
\end{document}